\newtheorem{thm}{Theorem}[section]
\newtheorem{prop}[thm]{Proposition}
\newtheorem{lemma}[thm]{Lemma}
\newtheorem{cor}[thm]{Corollary}
\newtheorem{defn}[thm]{Definition}
 \newtheorem{ex}[thm]{Example}
 \theoremstyle{remark}
\def\gal{\operatorname{Gal}}
\def\ker{\operatorname{Ker}}
\def\coker{\operatorname{Coker}}
\def\hom{\operatorname{Hom}}
\def\tor{\operatorname{tor}}
\def\fr{\operatorname{fr}}
\def\im{\operatorname{im}}
\def\div{\operatorname{Div}}
\def\rank{\operatorname{rank}}
\def\corank{\operatorname{corank}}
\newcommand{\F}{\mathbb{F}}
\newcommand{\Q}{\mathbb{Q}}
\newcommand{\Z}{\mathbb{Z}}
\newcommand{\Zp}{\Z_p}
\newcommand{\QpZp}{\Q_p/\Z_p}
\begin{document}
\author{A. MOVAHHEDI and T. NGUYEN QUANG DO}
\title{On universal norms and the first layers of $\Z_p$-extensions of a number field}
\maketitle

\abstract 

For an odd prime  $p$  and a number field  $F$  containing a primitive $p$-th root of unity, we describe 
the Kummer radical  ${\cal A}_F$  of the first layers of all the  $\Z_p$-extensions of  $F$  
in terms of universal norms of  $p$-units along the cyclotomic tower of  $F$. 
We also study "twisted" radicals related to  ${\cal A}_F$.

\section*{Introduction}
 Let $p$ be an odd prime number and  $F$  be a number field containing a primitive $p$-th root of unity  $\zeta_p$. 
 We denote by  ${\cal A}_F$  the Kummer radical of the first layers of all the $\Z_p$-extensions of $F$. Precisely  ${\cal A}_F$  is the subgroup of  $F^\bullet/ F^{\bullet p}$ consisting of classes  $a$ mod  $F^{\bullet p}$  such that the Kummer extension  $F(\sqrt[p]{a})$  is contained in a  $\Z_p$-extension of $F$. 
 The determination of the group  ${\cal A}_F$  is an old problem which dates back to the beginnings of Iwasawa theory and has since been tackled by many authors. 
 Here we present what we hope to be a satisfactory Iwasawa theoretical solution. 
 In order to give meaning to this assertion, it is appropriate to recall that most of the (numerous) results obtained so far bring  the study of  ${\cal A}_F$  back to that of its orthogonal complement for the Kummer pairing, 
 namely  $\tor_{\Z_p} {\cal X}_F/p$,  where 
${\cal X}_F$ denotes the Galois group of the maximal abelian pro-$p$-extension of  $F$  which is unramified outside  $p$-adic primes. Without pretending to be exhaustive, let us cite the following articles:  \cite {CK76},  which uses the idelic description of  ${\cal X}_F$  to compute  $\tor_{\Z_p} {\cal X}$  for certain quadratic fields; 
\cite  {Gra85},  which constructs from Artin symbols a "logarithm" map on  ${\cal X}_F$  whose kernel is precisely the  $\Z_p$-torsion $\tor_{\Z_p} {\cal X}_F$; \cite  {He88, Th93}, which approach ${\cal A}_F$ by a "d\'evissage" of $\tor_{\Z_p} {\cal X}_F$  in a local-global perspective $\cdots$. From a cohomological point of view, and under Leopoldt's conjecture,  $\tor_{\Z_p} {\cal X}_F$  is a "twisted dual" (see for example   \cite {Ng86} and the references therein)  of the $p$-primary part of the tame kernel of  $K_2F$ and a question raised by Coates   \cite {Co73}  was whether  ${\cal A}_F$  coincides with the Tate kernel of  $F$, i.e. the subgroup  ${\cal T}_F$  of  $F^\bullet/ F^{\bullet p}$ consisting of classes  $a$ mod  $F^{\bullet p}$  such that the symbol  $\{\zeta_p, a\}=0$  in $K_2F$. In this direction and still without claim of exhaustiveness, let us cite the following articles:  
\cite {KC78}, which establishes a "wrong duality"  between the elements of order  $p$ in  ${\cal X}_F$  and the quotient mod  $p$  of the tame kernel  $R_2F$;  \cite {Gre78},  which performs Iwasawa descent on the "twisted duals"  of the free part of the Galois group ${\cal X}_{F_\infty}$, where $F_\infty$ is the cyclotomic $\Z_p$-extension of $F$. 

All these approaches, by class field theory or by $K$-theory, produce effective methods allowing to compute  ${\cal A}_F$  from  arithmetical parameters attached to  $F$,  such as the class group or the group of units of the field. But these descriptions of  ${\cal A}_F$  can not be considered as theoretically complete when they lead to other arithmetical objects such as  $\tor_{\Z_p} {\cal X}_F$   or  ${\cal T}_F$  which are not necessarily better known than  ${\cal A}_F$  itself. Thus 
$\tor_{\Z_p} {\cal X}_F$  is, by Iwasawa theory, linked to the  $p$-adic $L$-functions and its "local-global d\'evissage"  \cite {He88, Ng86, Th93} leads to an Iwasawa module of descent closely related to the class group. In the same way, the "local-global d\'evissage" of the Tate kernel  ${\cal T}_F$  eventually leads to the wild kernel, which is in turn isomorphic to a twisted version of the preceding Iwasawa module of descent. 
In addition, the intervention of an Iwasawa descent suggests that a satisfactory theoretical description of the Kummer radical   ${\cal A}_F$  
should include in some way an asymptotical ingredient.  
This is confirmed by Greenberg's answer to the question of Coates  \cite [page 1242]{Gre78} :  
though on the ground level  ${\cal A}_F$  and  ${\cal T}_F$ are not the same in general, they coincide when going sufficiently   up the cyclotomic tower. In short, a satisfactory (in our sense) description of  ${\cal A}_F$  should take into account  the following two remarks: \\
- the parameters which intervene should be arithmetically or at least effectively accessible \\
- the answer should be allowed to incorporate asymptotical ingredients, i.e. coming from  high enough  $F_n$  (for an explicit or computable  $n$.)

In this paper, we introduce as parameters some accessible norm subgroups of the  pro-$p$-completion  ${\bar U}_F$  of the group of  $p$-units of  $F$, more precisely, the subgroup  ${\tilde U}_F$ of  (global) universal norms in the cyclotomic  $\Z_p$-extension  
$F_\infty/F$, as well as the subgroup  ${\hat U}_F$ of those which are locally universal norms in the local cyclotomic  $\Z_p$-extensions  $F_{v, \infty}/F_v$  at all $p$-adic primes $v$ (hence at all finite primes):   ${\tilde U}_F \subset {\hat U}_F \subset {\bar U}_F$. 
It is known (\cite {FN91, MS03, Se11}) that every element of   ${\tilde U}_F$  starts a  $\Z_p$-extension, i.e.  
${\tilde U}_F  F^{\bullet p}/ F^{\bullet p}  \subset  {\cal A}_F$. 
Our goal is to compare  ${\cal A}_F$  with various radicals derived from   ${\tilde U}_F$ and  ${\hat U}_F$. 

As a first step, we  bound the Kummer radical   ${\cal A}_F$  "from below" by the radicals  ${\tilde U}_F  F^{\bullet p}/ F^{\bullet p}$  and  ${\cal A}_F \cap ({\hat U}_F/p)$  and describe the deviations in terms of some asymptotic capitulation kernels (Corollary 2.5 and Proposition 2.7). Then, we give an "upper bound" for  ${\cal A}_F$  in terms of the fixed points  $({\hat U}_{F_n}/p)^{G_n}$ for  $n >>0$ (but accessible) with  $G_n =\gal(F_n/F)$,  and describe the deviation between them (Theorem 3.1).  
Finally, we introduce a radical  ${\cal B}_F$  defined by conditions of  $\Z_p$-embeddability locally everywhere, which contains the previous radicals  ${\cal A}_F $, ${\cal T}_F$  and  ${\hat U}_F/p$,  and we determine the three respective quotients in Iwasawa theoretical terms. 
Since  the modules  ${\hat U}_{F_n}$  (resp. the capitulation kernels)  are immediately (resp.  asymptotically and effectively) accessible,  our goal has been reached.

Although our approach has a theoretical orientation, it lends itself to effective or algorithmic calculations, as will be shown in the examples of section  $4$, where we shall study the three radicals  ${\cal A}_F $, ${\cal T}_F$  and  ${\hat U}_F/p$  for  $p=3$ and biquadratic fields of the form   $F=\Q(\mu_3, \sqrt{d})$.

\bigskip 
We gather here the notations used in the text.  
Note that not all of them agree with the usual notations in Iwasawa theory as appearing for instance in Iwasawa's paper  \cite {Iw73}. 
 
\vspace{2mm} 
\begin{tabular}{rl}
$F$						& our base number field; \\
$F^\bullet$				& multiplicative group of non-zero elements of  $F$; \\
$r_1$, $r_2$				& number of real (resp. non-conjugate complex)  embeddings of $F$; \\
$F_{\infty} =\cup F_n$		& cyclotomic $\Z_p$-extension of $F$, with finite layers $F_n$; \\
$s$, $s_n$					& number of $p$-adic primes in $F$ (resp. in $F_n$); \\
$\mu_{p^n}$, $\mu_{p^\infty}$  & group of $p^n$-th (resp. all $p$-primary) roots of unity; \\
$\mu_(F)= \mu_{p^\infty} \cap F$  				& group of all  $p$-primary roots of unity contained in $F$; \\
$U_F$, $U_n$				& group of $(p)$-units in $F$ (resp. in $F_n$); \\
${\bar U}_F =\varprojlim (U_F/p^m)$, ${\bar U}_n$ 	&  pro-$p$-completion of  $U_F$ (resp. of $U_n$); \\
$F_v$    & completion of  $F$ at a prime  $v$   in  $F$; \\
${\hat U}_F$,   ${\hat U}_n$  &  subgroup of  ${\bar U}_F$ (resp.  ${\bar U}_n$)  consisting of elements which are universal \\  
& norms from the cyclotomic  $\Z_p$-extensions  $F_{\infty,v}$  for all finite primes  $v$; \\

${\tilde U}_F$,  ${\tilde U}_n$  &  subgroup of  ${\bar U}_F$ (resp.  ${\bar U}_n$)  consisting of universal norms from  $F_{\infty}$; \\

$V_n$  &  factor group ${\bar U}_n/{\tilde U}_n$ \\

${\bar U}_{\infty}= \varprojlim {\bar U}_n$ 	& inverse limit with respect to norm maps; \\ 
$\Lambda=\Z_p[[\Gamma]]\simeq \Z_p[[T]]$ &  Iwasawa algebra, the isomorphism being obtained by mapping $\gamma$ to $1+T$; \\
$S$                  & set of all places of $F$ over $p$; \\ 
$A_F$, $A_n$               & $p$-primary part of the $(p)$-class group of $F$ (resp.  of $F_n$); \\
$A_{\infty}=\varinjlim A_n$ &  $p$-primary part of the $(p)$-class group
 of $F_{\infty}$; \\
$L_n$, $L_{\infty}$   & maximal unramified abelian pro-$p$-extension of $F_n$ \\
               & (resp. of $F_{\infty}$) in which every prime over $p$ splits completely; \\

${\hat F}$   & maximal abelian pro-$p$-extension of $F$ unramified outside the $p$-adic primes;\\
$F^{BP}$   & field of Bertrandias-Payan over $F$, \emph{i.e.},  the compositum of all $p$-extensions  \\
		  & of $F$ which are infinitely embeddable in cyclic $p$-extensions; \\      
$\bar{F}_v^{\bullet}$  &  pro-$p$-completion of  $F_v^{\bullet}$; \\
$\tilde{F}_v^{\bullet}$  & group of universal norms in the cyclotomic $\Z_p$-extension $F_{\infty,v}/F_v$; \\
${\tilde {\cal F}} = {\small \prod} \tilde{F}_v^{\bullet}$ \\

\end{tabular}

\begin{tabular}{rl}
\label{not2}

$\Gamma$,  $\Gamma_n$  & Galois group  $\gal (F_{\infty}/F)$  (resp.  $\gal (F_{\infty}/F_n)$); \\ 
$\gamma$ &  a fixed topological generator of $\Gamma$; \\
$G_n \simeq \Z/p^n\Z$   & Galois group  $\gal (F_n/F)$; \\ 
$BP_F$ & Galois group $\gal (F^{BP}/F)$; \\
$G_S(F)$ & Galois group over  $F$  of the maximal $S$-ramified extension of $F$; \\
${\mathfrak X}_F$ &  Galois group over  $F$  of the maximal $S$-ramified abelian pro-$p$-extension of  $F$; \\
& $=G_S(F)^{\textrm{ab}}\otimes \Z_p$; \\
${\mathfrak X}_v$  &   Galois group over $F_v$ of the maximal abelian pro-p-extension of $F_v$; \\
${\mathfrak X}_\infty= \varprojlim {\mathfrak X}_{F_n}$ & Galois group over  $F_\infty$  of the maximal $S$-ramified abelian pro-$p$-extension of $F_\infty$; \\
${\mathfrak X}'_F$ & maximal factor group  $({\mathfrak X}_\infty)_\Gamma$  of  ${\mathfrak X}_\infty$  on which  $\Gamma$  acts trivially; \\
$X_\infty= \varprojlim {A_n}$ & Galois group  $\gal (L_{\infty}/F_\infty)$; \\
$\Delta_\infty$  &     Gross asymptotical defect, namely,  $\varinjlim  (X_\infty^{\Gamma_n} \otimes \Q_p/\Z_p)$ ;\\
$X^\circ$  &  maximal finite submodule of  $X_\infty$. \\
\end{tabular}

When  $\mu_p$  is contained in  $F$: 

\begin{tabular}{rl}
\label{not3}

${\cal A}_F$  & the Kummer radical of the first layers of $\Z_p$-extensions of $F$;  \\
${\cal T}_F$  & subgroup of  $F^\bullet/ F^{\bullet p}$ consisting of classes  $a$ mod  $F^{\bullet p}$  such that the symbol   $\{\zeta_p, a\}=0$  in $K_2F$;\\
${\cal B}_F $  &  subgroup of  $F^\bullet/F^{\bullet p}$ consisting of classes  $a$ mod  $F^{\bullet p}$  such that 
$F (\sqrt[p]{a})/F$ \\ 
&  can be embedded in cyclic  $p$-extensions of arbitrarily large degree;  \\
\end{tabular}

The notation $( - )'$ for a group indicates that we have factored out the  $p$-primary roots of unity. Thus:

\begin{tabular}{rl}
\label{not4}

${\bar U'}_F = {\bar U}_F /\mu(F)$ 		& group of $(p)$-units in $F$ factored by its torsion part; \\

${\bar U'}_n = {\bar U}_n /\mu_{p^\infty}(F_n)$ 	& group of $(p)$-units in $F_n$ factored by its torsion part; \\ 

$U'_{\infty}=\varinjlim U'_n$    & group of $(p)$-units in $F_{\infty}$; \\ 

${\bar U}'_{\infty}= \varprojlim {\bar U}'_n$ 		& inverse limit with respect to the norm maps; \\ 

${\cal A}'_F = {\cal A}_F/(\mu(F)/p)$  &  Kummer radical obtained by factoring out the cyclotomic \\
& $\Z_p$-extension of $F$;  \\
${\cal B}'_F = {\cal B}_F/(\mu(F)/p)$  &    \\ 
${\cal T}'_F = {\cal T}_F/(\mu(F)/p)$  &    \\ 
\end{tabular}

\begin{tabular}{ll}
\label{not5}

We will also need the following notations inside the "universal kummer radical"  ${\cal K}= F_\infty^{\bullet} \otimes \Q_p/\Z_p$:\\

${\mathfrak L}\cong \hom (\fr_\Lambda ({\cal X}_\infty), \mu_{p^\infty})$; \\ 
$\mathfrak N=\varinjlim ({\bar U}_n \otimes \Q_p/\Z_p)$; \\  
$\mathfrak {\hat N}=\varinjlim ({\hat U}_n \otimes \Q_p/\Z_p)$; \\ 
$\mathfrak {\tilde N}=\varinjlim ({\tilde U}_n \otimes \Q_p/\Z_p)$; \\ 
\end{tabular}

If $n$ is a non-negative integer and $A$ is an abelian group, we 
denote by $A[n]$ the kernel of multiplication by $n$, and by $A/n$ the 
cokernel. For a prime number  $p$, we denote by  $A\{p\}$ the $p$-primary part of  $A$. 
Also $\div(A)$  will denote the maximal divisible subgroup of $A$ and  
$A/\div(A)$ is simply written  $A/\div$.
If $M$ is a module over a ring $R$, $\tor_R(M)$ is the
$R$-torsion sub-module of $M$, and $\fr_R(M) := M/\tor_R(M)$.
Finally,  $( - )^* = Hom(-, \Q_p/\Z_p)$  is the Pontrjagin dual.

\section{Norm subgroups and a radical at infinite level}
In this section, we recall (and prove if necessary) a number of results, which are fragmented and more or less well-known, concerning  some norm subgroups of  $(p)$-units and the Kummer radical of an Iwasawa module related to our problem. They will not all be needed in the sequel, but we will give as complete an account as possible, relying essentially on theorems of Kuz'min \cite {Ku72}. If pressed for time, the reader can skip this section, coming back to it if necessary.

\subsection{Global and local universal norms} 
Let  $U_F$  be the group of  $(p)$-units in our number field  $F$. 
Its pro-$p$-completion, denoted by  ${\bar U}_F$, is   $U_F \otimes \Z_p$  since  $U_F$  is finitely generated. 
Along the cyclotomic tower,  ${\bar U}_{F_n}$  is simply written ${\bar U}_n$  and we put  ${\bar U}_{\infty}= \varprojlim U_n$  for norm maps.  
The  $\Z_p$-torsion of  ${\bar U}_n$  is  the group  $\mu_{p^\infty}(F_n)$  of  $p$-primary roots of unity contained in $F_n$. By adding an apostrophe we indicate the  $\Z_p$-free part of our modules: 
$${\bar U'}_n := {\bar U}_n /\mu_{p^\infty}(F_n).$$  
Finally, we put:  ${\bar U'}_{\infty}:= \varprojlim U'_n$.

The following result, due to Kuz'min \cite{Ku72},  gives the  $\Lambda$-structure of  ${\bar U}_{\infty}$. 
It is classically known. 
\begin{prop} 
${\bar U'}_{\infty}$  is  $\Lambda$-free of rank  $r_1 + r_2$. 
When  $\mu_p \subset F$  then  we have a  $\Lambda$-module isomorphism  ${\bar U}_{\infty} \simeq \Z_p(1) \oplus {\bar U'}_{\infty}.$
\end{prop}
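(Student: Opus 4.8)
The statement falls into two parts, and the plan is to treat them in turn, the second being a formal consequence of the first.

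\emph{Part 1: $\bar U'_{\infty}$ is $\Lambda$-free of rank $r_1+r_2$.} This is Kuz'min's theorem \cite{Ku72}; let me recall the shape of the argument, since the freeness is the only delicate point. One starts from the valuation exact sequence of abelian groups at each finite layer $F_n$,
$$0 \longrightarrow E_n \longrightarrow U_n \longrightarrow \bigoplus_{v\mid p}\Z \longrightarrow C_n \longrightarrow 0,$$
where $E_n$ is the unit group of $F_n$ and $C_n$ is the (finite) subgroup of the ideal class group of $F_n$ generated by the classes of the primes above $p$. Tensoring with $\Zp$ (which is flat) and passing to the inverse limit over the norm maps — all the groups in sight are compact $\Zp$-modules, so $\varprojlim^{1}$ vanishes throughout — yields an exact sequence of $\Lambda$-modules
$$0 \longrightarrow \bar E_{\infty} \longrightarrow \bar U_{\infty} \longrightarrow \varprojlim_n\Big(\bigoplus_{v\mid p}\Zp\Big) \longrightarrow \varprojlim_n C_n \longrightarrow 0.$$
Since $p$ is odd, the primes above $p$ are eventually totally ramified along $F_{\infty}/F_n$, so the third term is $\cong\Zp^{\,s}$ and $\varprojlim_n C_n$ is $\Lambda$-torsion (a subquotient of $X_{\infty}$). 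One is thereby reduced to the classical structure of $\varprojlim_n(E_n\otimes\Zp)$ — or, what comes to the same after comparison with the module of semilocal units $\varprojlim_n\prod_{v\mid p}\Oc_{F_{n,v}}^{\times}$ — and a rank count gives $\Lambda$-rank $r_1+r_2$ for $\bar U'_{\infty}$ once its $\Zp$-torsion $\varprojlim_n\mu_{p^{\infty}}(F_n)$ has been factored out.

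The freeness is the heart of the matter and the step I expect to be the main obstacle. It amounts to showing that $\bar U'_{\infty}$ carries no nonzero pseudo-null (here: finite) $\Lambda$-submodule and that the relevant $\operatorname{Ext}$ along the tower vanishes, i.e. that $\bar U'_{\infty}$ is a \emph{reflexive} $\Lambda$-module. Since $\Lambda\cong\Zp[[T]]$ is a two-dimensional regular local ring, a reflexive $\Lambda$-module has depth $2$, hence projective dimension $0$ by Auslander--Buchsbaum, hence is free; and a free module of the right $\Lambda$-rank is $\Lambda^{r_1+r_2}$. (If one is content to quote \cite{Ku72} directly, this part needs no further comment.)

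\emph{Part 2: the splitting $\bar U_{\infty}\simeq\Zp(1)\oplus\bar U'_{\infty}$ when $\mu_p\subset F$.} Here I argue formally from Part 1. Since $\mu_p\subset F$ and $p$ is odd, $F_{\infty}\supseteq\Q(\mu_{p^{\infty}})$, so $\mu_{p^{\infty}}(F_n)$ is exactly the $\Zp$-torsion of $\bar U_n$; it is a finite $p$-group, whence $\varprojlim^{1}_n\mu_{p^{\infty}}(F_n)=0$, and $\varprojlim_n\mu_{p^{\infty}}(F_n)\cong\Zp(1)$ (the norm maps being, up to $p$-adic units, multiplication by $p$, so the limit is the Tate module of $\mu_{p^{\infty}}$). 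Passing to the inverse limit in the tautological sequences $0\to\mu_{p^{\infty}}(F_n)\to\bar U_n\to\bar U'_n\to0$ therefore produces a short exact sequence of $\Lambda$-modules
$$0 \longrightarrow \Zp(1) \longrightarrow \bar U_{\infty} \longrightarrow \bar U'_{\infty} \longrightarrow 0.$$
By Part 1 the quotient $\bar U'_{\infty}$ is $\Lambda$-free, hence projective, so $\operatorname{Ext}^1_{\Lambda}(\bar U'_{\infty},\Zp(1))=0$ and this sequence splits. Hence $\bar U_{\infty}\simeq\Zp(1)\oplus\bar U'_{\infty}$, as claimed.
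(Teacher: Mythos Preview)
Your proposal is correct. The paper does not actually prove this proposition: it simply states that the result is ``classically known'' and attributes it to Kuz'min \cite{Ku72}. Your treatment is therefore consistent with the paper's approach for Part~1 (deferring the freeness to Kuz'min, while correctly identifying reflexivity over the two-dimensional regular local ring $\Lambda$ as the key mechanism), and it supplies for Part~2 an explicit splitting argument---via $\varprojlim^1$ vanishing on finite groups and projectivity of the $\Lambda$-free quotient---that the paper omits entirely. One minor remark: in Part~2 you might note explicitly that $\mu_p\subset F$ forces $F_\infty=F(\mu_{p^\infty})$, so that $\mu_{p^\infty}(F_n)=\mu_{p^{a+n}}$ with $\mu(F)=\mu_{p^a}$, making the identification $\varprojlim_n\mu_{p^\infty}(F_n)\cong\Zp(1)$ completely transparent; but this is implicit in what you wrote.
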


\begin{defn} 
The universal norm subgroup of  ${\bar U}_F$  (resp. of  ${\bar U}'_F$) is the intersection  
$\cap_{n\geq 0} N_n({\bar U}_n)$, denoted by  ${\tilde U}_F$ 
(resp. of  $\cap_{n\geq 0} N_n({\bar U}'_n)$  , denoted by  ${\tilde U}'_F$). 
Here  $N_n$ is the norm map in  $F_n/F$.  
\end{defn}

The usual compactness argument shows that  ${\tilde U}_F$  is the image of the natural map  
$$({\bar U}_{\infty})_\Gamma  \to  {\bar U}_F.$$
Furthermore,  this co-descent morphism is injective \cite [Theorem 7.3] {Ku72}  
so that  ${\tilde U}_F  \simeq  ({\bar U}_{\infty})_\Gamma$  is of  $\Z_p$-rank equal to $r_1 + r_2$.  
Similarly,  ${\tilde U}'_F$  is isomorphic to  $({\bar U'}_{\infty})_\Gamma$  and is of  $\Z_p$-rank  $r_1 + r_2$.  

We are now going to define a second norm subgroup of  ${\bar U}_F$.  
Let $\bar{F}_v^{\bullet} := \varprojlim  F_v^{\bullet} /F_v^{\bullet p^n}$  denote the pro-$p$-completion of  $F_v^{\bullet}$,  and  
$\tilde{F}_v^{\bullet}$  denote the group of universal norms in the cyclotomic $\Z_p$-extension of $F_v$ (the local analogue of definition 1.2).  
Let  $X_\infty := \gal(L_\infty/F_\infty)$, where $L_\infty$  is the maximal unramified abelian pro-$p$-extension of  $F_{\infty}$ in which every prime over $p$ (hence every prime) splits.    
We have the following Sinnott exact sequence (see \cite[etc]{FGS81, Ja87, Ko91}):
$$ {\bar U}_F \overset{g_F} \longrightarrow {\tilde \oplus}_{v|p}\bar{F}_v^{\bullet}/\tilde{F}_v^{\bullet} \overset{Artin} \longrightarrow  (X_\infty)_\Gamma \to A_F  \to 0  \quad  (Sinnott)$$
where  $g_F$  is naturally deduced from the diagonal map  taking its values in 
${\tilde \oplus}_{v|p}\bar{F}_v^{\bullet}/\tilde{F}_v^{\bullet}$, 
the elements having a trivial sum of components. 

\begin{defn} 
The everywhere local universal norm subgroup  ${\hat U}_F$  of  ${\bar U}_F$  consists of those elements which are locally universal norms in the cyclotomic  $\Z_p$-extensions  $F_{\infty,v}/F_v$  for all finite primes  $v$  in  $F$. 
Since for a non-$p$-adic prime $v$, the cyclotomic  $\Z_p$-extension  $F_v$  is unramified, the group of universal  norms in $F_{\infty,v}/F_v$ is precisely the group of units of  $F_v$. Therefore 
$${\hat U}_F = \ker g_F.$$
\end{defn}

We obviously have the following inclusions  ${\tilde U}_F  \subset  {\hat U}_F  \subset {\bar U}_F,$  as well as  
${\tilde U}'_F  \subset  {\hat U}'_F  \subset {\bar U}'_F,$  where  an apostrophe indicates factoring out the  $\Z_p$-torsion subgroup  $\mu(F).$ 

The next lemma, which is proved by class field theory, gives a characterization of the Gross kernel  ${\hat U}_F$ (see \cite  [Proposition 2.1]{BP72} and \cite [section 1] {Ko91}). 
\begin{lemma} 
For an element  $x \in {\bar U}_F$, we have: 
$$x \in {\hat U}_F  \iff  N_{F_v/\Q_p}(x) \in  p^{\Z} \quad \forall v|p.$$
\end{lemma}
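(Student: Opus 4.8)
\emph{Sketch of proof.} The plan is to reduce the statement to a purely local one and then settle that by local class field theory. Since $\hat U_F=\ker g_F$ (Definition 1.4) and $g_F$ is induced by the diagonal map into ${\tilde \oplus}_{v|p}\bar{F}_v^{\bullet}/\tilde{F}_v^{\bullet}$, an element $x\in\bar U_F$ lies in $\hat U_F$ exactly when, for every $v\mid p$, the image of $x$ in $\bar{F}_v^{\bullet}$ lies in $\tilde{F}_v^{\bullet}$, i.e. is a universal norm in the local cyclotomic $\Zp$-extension $F_{v,\infty}/F_v$. So it is enough to prove the following local assertion: for a finite extension $K$ of $\Qp$ and $x\in\bar{K}^{\bullet}$, the element $x$ is a universal norm in $K_\infty/K$ if and only if $N_{K/\Qp}(x)$ lies in $p^{\Z}$, where $p^{\Z}$ is read inside the pro-$p$-completion $\overline{\Qp^{\bullet}}$ and thus means the closed subgroup $p^{\Zp}$ topologically generated by $p$.

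For this local statement I would use the reciprocity isomorphism $\mathrm{rec}_K\colon \bar{K}^{\bullet}\cong\gal(M_K/K)$ onto the Galois group of the maximal abelian pro-$p$-extension $M_K$ of $K$: by construction, $x$ is a universal norm in $K_\infty/K$ precisely when $\mathrm{rec}_K(x)$ acts trivially on $K_\infty$. Now $K_\infty=K\cdot\Q_{p,\infty}$, where $\Q_{p,\infty}$ denotes the cyclotomic $\Zp$-extension of $\Qp$, so restriction embeds $\gal(K_\infty/K)$ into $\gal(\Q_{p,\infty}/\Qp)$, and $\mathrm{rec}_K(x)$ is trivial on $K_\infty$ if and only if it is trivial on $\Q_{p,\infty}$. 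Since $\Q_{p,\infty}\subset\Qp^{\mathrm{ab}}$, the functoriality of the reciprocity map, $\mathrm{res}_{\Qp^{\mathrm{ab}}}\circ\mathrm{rec}_K=\mathrm{rec}_{\Qp}\circ N_{K/\Qp}$, turns this into the condition that $\mathrm{rec}_{\Qp}\bigl(N_{K/\Qp}(x)\bigr)$ be trivial on $\Q_{p,\infty}$; equivalently, $N_{K/\Qp}(x)$ is a universal norm in $\Q_{p,\infty}/\Qp$. (The "only if" direction can alternatively be read off from transitivity of norms, since each layer $F_{v,n}$ contains $\Q_{p,m}$ with $m=m(n)\to\infty$, so $N_{F_v/\Qp}$ of a universal norm lands in $\bigcap_m N_{\Q_{p,m}/\Qp}(\Q_{p,m}^{\bullet})$.)

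It then remains to identify the universal norm subgroup of $\Qp^{\bullet}$ attached to $\Q_{p,\infty}/\Qp$. Identifying $\gal(\Qp(\mu_{p^\infty})/\Qp)$ with $\Zp^{\times}$ via the cyclotomic character, $\Q_{p,\infty}$ is the fixed field of the torsion subgroup $\mu_{p-1}\subset\Zp^{\times}$ (the Teichm\"uller part). Writing $y=p^{a}u$ with $a\in\Z$ and $u\in\Zp^{\times}$, the uniformizer $p$ acts trivially on $\Qp(\mu_{p^\infty})$ under reciprocity, so $\mathrm{rec}_{\Qp}(y)$ is trivial on $\Q_{p,\infty}$ exactly when $u\in\mu_{p-1}$; hence the universal norm subgroup of $\Qp^{\bullet}$ for $\Q_{p,\infty}/\Qp$ is $p^{\Z}\cdot\mu_{p-1}$ (equivalently, the intersection over $n$ of the layer norm groups $p^{\Z}\cdot\mu_{p-1}\cdot(1+p^{n+1}\Zp)$). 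Passing to pro-$p$-completions kills the prime-to-$p$ torsion $\mu_{p-1}$, so this subgroup becomes $p^{\Zp}$, and combining the three steps yields $x\in\hat U_F\iff N_{F_v/\Qp}(x)\in p^{\Z}$ for all $v\mid p$. All the ingredients are routine local class field theory; the two points that need care — and where I expect the only real friction — are the compatibility of the reciprocity maps with the norm, used to descend from $K$ to $\Qp$, and the pro-$p$-completion bookkeeping, which is precisely where the prime-to-$p$ torsion $\mu_{p-1}$ silently disappears and turns the computation into the clean condition "$\in p^{\Z}$".
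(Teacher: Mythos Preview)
Your argument is correct. The paper does not actually prove this lemma; it only says that it ``is proved by class field theory'' and refers to \cite[Proposition 2.1]{BP72} and \cite[section 1]{Ko91}. Your write-up supplies precisely those class-field-theoretic details: the reduction to a local statement via $\hat U_F=\ker g_F$, the norm-compatibility of the local reciprocity maps to descend from $F_v$ to $\Q_p$, and the explicit computation of the universal norm subgroup for $\Q_{p,\infty}/\Q_p$ as $p^{\Z}\cdot\mu_{p-1}$, which becomes $p^{\Z_p}$ after pro-$p$-completion. This is exactly the kind of argument the cited references carry out, so there is nothing to contrast.

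One small remark: you are right to flag the interpretation of ``$p^{\Z}$'' inside the pro-$p$-completion. Since the lemma is stated for $x\in\bar U_F$, the norm $N_{F_v/\Q_p}(x)$ naturally lives in $\overline{\Q_p^{\bullet}}$, and the condition must be read as membership in the closed subgroup topologically generated by $p$; your proof handles this correctly.
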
 

This lemma implies that  ${\hat U}_F$  is  "accessible"  in the sense of the introduction. 
According to the above Sinnott exact sequence,  the $\Z_p$-rank of  ${\hat U}_F$  is equal to  $r_1+r_2+\delta$, where   
$\delta :=rk_{\Z_p} (X_\infty)_\Gamma$. The number field  $F$  satisfies Gross' (generalized) Conjecture at $p$ if  $\delta = 0$, namely if  
$ (X_\infty)_\Gamma$  (or equivalently  $X_\infty^\Gamma$, because  $X_\infty$  is  $\Lambda$-torsion)   is finite.  Gross' Conjecture is known to hold for abelian extensions of  $\Q$  \cite{Gre73, Ja87}.  
In the inclusion tower  ${\tilde U}_F  \subset  {\hat U}_F  \subset {\bar U}_F,$  we have  
$${\bar U}_F/{\hat U}_F \simeq \im g_F \simeq \Z_p^{s-1-\delta}$$  
where  $s$  is the number of  $p$-adic primes of  $F$.  
Concerning the deviation between  ${\hat U}_F$  and  ${\tilde U}_F$  we have the following local-global result: 

\begin{prop} (\cite [Proposition 7.5] {Ku72})
There exists a canonical exact sequence 
$$ 0 \to {\tilde U}_F  \to {\hat U}_F \to X_\infty^\Gamma \to 0.$$
\end{prop}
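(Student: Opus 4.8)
The plan is to exhibit the canonical exact sequence by comparing the two co-descent descriptions already available to us. Recall from the discussion after Definition 1.2 that $\tilde{U}_F \cong (\bar{U}_\infty)_\Gamma$ via the injective co-descent map, and that $\hat{U}_F = \ker g_F$, where $g_F$ sits in the Sinnott exact sequence
$$\bar{U}_F \overset{g_F}{\longrightarrow} {\tilde\oplus}_{v|p}\bar{F}_v^\bullet/\tilde{F}_v^\bullet \overset{Artin}{\longrightarrow} (X_\infty)_\Gamma \to A_F \to 0.$$
First I would pass this sequence to the limit over the tower, or rather use that it is functorial along the tower, to produce the analogous statement at level $F_\infty$; the point is that $\varprojlim \bar{F}_{n,v}^\bullet/\tilde{F}_{n,v}^\bullet$ and the limit of the $\bar{U}_n$'s fit into a sequence whose $\Gamma$-coinvariants recover the level-$F$ sequence up to controlled error terms. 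The natural target of this comparison is the snake lemma applied to the multiplication-by-$(\gamma-1)$ (co)descent on $\bar{U}_\infty$ versus on the local modules.

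The cleaner route, and the one I would actually carry out, is to work directly with the inclusion $\tilde{U}_F \subset \hat{U}_F$ and identify the cokernel. Since $\tilde{U}_F$ is the image of $(\bar{U}_\infty)_\Gamma \to \bar{U}_F$ and $\hat{U}_F = \ker g_F$, the quotient $\hat{U}_F/\tilde{U}_F$ measures the failure of local universal norms (at every $v|p$) to be global universal norms. Concretely: an element $x \in \hat{U}_F$ is locally a universal norm at each $v|p$, hence maps to $0$ under each local Artin-type map into $\mathfrak{X}_v$-torsion; whether it is a \emph{global} universal norm is obstructed precisely by the kernel of the Artin map in the Sinnott sequence restricted to the norm-compatible locally-universal elements, and this kernel is governed by $X_\infty^\Gamma$. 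To make this precise I would invoke Kuz'min's results (the very theorems cited in the section preamble) on the structure of $\bar{U}_\infty$ and on the behaviour of $X_\infty$ under descent: the exact sequence $0 \to (X_\infty)_\Gamma \to \cdots$ dualizes/twists, and using that $X_\infty$ is $\Lambda$-torsion one gets $X_\infty^\Gamma$ finite-index-comparable to $(X_\infty)_\Gamma$. Chasing through, the map $\hat{U}_F \to X_\infty^\Gamma$ is the one sending $x$ to the class recording the discrepancy between its local and global norm-compatible liftings, it is surjective because every class in $X_\infty^\Gamma$ is realized by such a discrepancy (this is where Proposition 1.1 on the $\Lambda$-freeness of $\bar{U}'_\infty$ does the work, guaranteeing enough liftings), and its kernel is exactly $\tilde{U}_F$ by the co-descent characterization.

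I expect the main obstacle to be the \emph{surjectivity} onto $X_\infty^\Gamma$, i.e. showing that every $\Gamma$-fixed class in $X_\infty$ actually arises as an obstruction class of some everywhere-locally-universal $(p)$-unit, rather than just injectivity or exactness in the middle (which follow fairly formally from the two co-descent descriptions and a diagram chase). The surjectivity requires that the norm-compatible system of local universal norms can be adjusted, at the cost of a prescribed element of $X_\infty^\Gamma$, to become globally norm-compatible; this is a reciprocity/Poitou--Tate style statement and is the substantive input from Kuz'min's \cite{Ku72}. Once surjectivity is in hand, exactness at $\hat{U}_F$ is the statement $\tilde{U}_F = \ker(\hat{U}_F \to X_\infty^\Gamma)$, which is immediate from $\tilde{U}_F = \mathrm{im}\big((\bar{U}_\infty)_\Gamma \to \bar{U}_F\big)$ together with the injectivity of that map (Theorem 7.3 of \cite{Ku72}). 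Finally, canonicity: all maps involved are built from norm maps, the diagonal embedding, and Artin reciprocity, none of which involve choices, so the sequence is canonical as claimed.
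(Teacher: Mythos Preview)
The paper does not give its own proof of this proposition: immediately after the statement it records that Kuz'min's original proof is class-field-theoretic, and points to \cite[Theorem 3.3]{KNF96} and \cite[Section 3.2]{Ka06} for alternative arguments. So there is no in-paper proof to compare against; the paper treats the result as input.

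Your proposal is in the same position. You correctly identify the two descriptions to be compared (the co-descent image $\tilde U_F\simeq(\bar U_\infty)_\Gamma$ and the Sinnott kernel $\hat U_F=\ker g_F$), and you correctly locate the substantive point as the surjectivity of $\hat U_F\to X_\infty^\Gamma$. But you then explicitly defer that surjectivity to ``the substantive input from Kuz'min's \cite{Ku72}'', which means your write-up, like the paper's, is a citation rather than a proof. The passages about ``chasing through'' and ``adjusting local universal norms at the cost of a prescribed element of $X_\infty^\Gamma$'' are suggestive but do not constitute an argument: you have not constructed the map $\hat U_F\to X_\infty^\Gamma$ concretely, nor shown it is well-defined, nor verified surjectivity. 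The remark that $\Lambda$-freeness of $\bar U'_\infty$ ``does the work'' for surjectivity is not right as stated---freeness gives you control over $(\bar U_\infty)_\Gamma$, but the surjectivity onto $X_\infty^\Gamma$ requires a genuine class-field-theoretic or cohomological link between the local/global norm defect and the $\Gamma$-invariants of $X_\infty$, which is exactly the content of the cited proofs.

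If you want an honest self-contained argument, the Iwasawa-theoretic route in \cite{KNF96} proceeds via an exact sequence at the infinite level relating $\bar U_\infty$, the semi-local module, and $X_\infty$, and then applies $\Gamma$-coinvariants together with the vanishing of the relevant $H^1$; the snake lemma then produces $X_\infty^\Gamma$ as the cokernel directly, with surjectivity coming from the vanishing of the next term rather than from an ad hoc lifting argument.
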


Kuz'min's proof is class field theoretic. For a proof with a more Iwasawa-theoretic flavour, see 
\cite [Theorem 3.3] {KNF96}. See also \cite [Section 3.2] {Ka06}  for a cohomological proof. 
Our interest in  ${\tilde U}_F$,  as we recalled in the introduction, lies in the fact that, in the Kummerian situation, every element of  ${\tilde U}_F$  starts a  $\Z_p$-extension. We give a quick proof for the convenience of the reader: 

\begin{lemma} 
When  $\mu_p \subset F$, the Kummer radical  ${\cal A}_F$  contains  ${\tilde U}_F F^{\bullet p}/F^{\bullet p}$. 
\end{lemma}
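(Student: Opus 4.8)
The plan is to exploit the identification $\tilde U_F \simeq (\bar U_\infty)_\Gamma$ recalled just after Definition 1.2, together with the $\Lambda$-freeness of $\bar U'_\infty$ from Proposition 1.1, to produce for each $a \in \tilde U_F$ an explicit $\Z_p$-extension of $F$ containing $F(\sqrt[p]{a})$. First I would lift a given $a \in \tilde U_F$ to a norm-compatible system $(a_n)_n \in \bar U_\infty = \varprojlim \bar U_n$, so that $N_n(a_n) = a$ for all $n$ (here $a_0 = a$). The point is that such a compatible system encodes, via Kummer theory at each finite level, a tower of Kummer extensions that glue together.

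Next I would pass to the free part: writing $\bar U_\infty \simeq \Z_p(1) \oplus \bar U'_\infty$ as in Proposition 1.1, the torsion summand $\Z_p(1)$ corresponds precisely to (the pro-$p$ completion of) $\mu_{p^\infty}$ and starts the cyclotomic $\Z_p$-extension $F_\infty/F$ itself, so it contributes only the class of $\zeta_p$, which lies in $\mathcal A_F$ trivially. Hence I may assume $a$ comes from $\bar U'_\infty$, which is $\Lambda$-free. Choosing a $\Lambda$-basis, the coherent system $(a_n)$ determines an element of $\bar U'_\infty$; the key step is then to observe that for such an element the associated sequence of degree-$p$ Kummer extensions $F_n(\sqrt[p]{a_n})$ forms a $\mathbb{Z}/p^{n+1}$-extension of $F$ whose fixed field over $F$ of the right subgroup is exactly $F(\sqrt[p]{a})$. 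Concretely: because $a = N_n(a_n)$, the extension $F(\sqrt[p]{a})$ embeds into the $\Z/p^{n+1}\Z$-extension of $F$ cut out by $a_n$ along $F_n/F$ — one checks the compatibility of Kummer generators with the norm maps using that $\mathrm{Gal}(F_n/F)$ is cyclic of order $p^n$, so the relevant Hochschild–Serre/inflation argument at each step extends a given cyclic $p^n$-extension containing $F(\sqrt[p]{a})$ by one more layer. Taking the inverse limit over $n$ produces a $\Z_p$-extension of $F$ containing $F(\sqrt[p]{a})$, which is exactly the assertion $aF^{\bullet p}/F^{\bullet p} \in \mathcal A_F$.

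The main obstacle I anticipate is the bookkeeping that makes the cyclic layers compatible: one must verify that the degree-$p$ extensions $F(\sqrt[p]{a})$, viewed inside the successive $F_n(\sqrt[p]{a_n})$, can be chosen to sit in a \emph{coherent} tower of cyclic $p$-power extensions of $F$ (not merely of the $F_n$). This is where the norm-compatibility $N_n(a_n) = a$ is essential and where a clean statement of the embedding obstruction — an element of $H^2$ which vanishes precisely because $a$ is a norm from level $n$ — is needed. Once that vanishing is established, the passage to the limit is formal. An alternative, perhaps cleaner, route is to invoke the known criterion that $F(\sqrt[p]{a})$ embeds into a $\Z_p$-extension of $F$ if and only if $a$ lies in the kernel of a suitable map to local data (the radical $\mathcal B_F$ or its analogues discussed later), and then check that $\tilde U_F$, being global universal norms, automatically satisfies the local conditions at every place — but since the statement is attributed to \cite{FN91, MS03, Se11} and billed as a quick reminder, the direct construction above via coherent Kummer towers is the most self-contained.
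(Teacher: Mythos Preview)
Your main construction contains a genuine error. You claim that for a norm-coherent system $(a_n)\in\bar U'_\infty$, the field $F_n(\sqrt[p]{a_n})$ is a cyclic extension of $F$ of degree $p^{n+1}$ whose first layer over $F$ is $F(\sqrt[p]{a})$. But $F_n(\sqrt[p]{a_n})$ contains $F_n$; if $F_n(\sqrt[p]{a_n})/F$ were cyclic of order $p^{n+1}$, then $F_n$ would be its unique subfield of index $p$, and hence its unique degree-$p$ subextension over $F$ would be $F_1$, the first \emph{cyclotomic} layer. That forces $F(\sqrt[p]{a})=F_1$, i.e.\ $a\equiv\zeta_p\pmod{F^{\bullet p}}$, contradicting your reduction to the free part. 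In fact there is no reason for $F_n(\sqrt[p]{a_n})/F$ even to be Galois: the norm relation $N_n(a_n)=a$ imposes no constraint forcing the class of $a_n$ in $F_n^{\bullet}/F_n^{\bullet p}$ to span a $G_n$-stable line. The ``$H^2$ obstruction that vanishes because $a$ is a norm'' is not the right mechanism here; the embedding problem you need to solve is over $F$, not along the cyclotomic tower, and the norm-compatibility of the $a_n$ does not by itself trivialise it.

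Your proposed alternative via $\mathcal B_F$ also fails: $\mathcal B_F$ encodes only \emph{everywhere local} $\Z_p$-embeddability, and the paper itself shows (Theorem~3.6(ii)) that $\mathcal B_F/\mathcal A_F\simeq X_\infty(-1)_\Gamma[p](1)$ is nonzero in general. Checking that universal norms satisfy the local conditions puts you in $\mathcal B_F$, not in $\mathcal A_F$.

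The paper's argument avoids any explicit tower construction. It works cohomologically: the codescent map on the $(-1)$-twist produces a homomorphism $\bar U_\infty(-1)_\Gamma\to\hom(G_S(F),\Z_p)$ (this is where \cite{FN91} enters), and the point is that $\hom(G_S(F),\Z_p)$ is exactly what parametrises $\Z_p$-extensions of $F$ unramified outside $p$. Reducing mod $p$ and untwisting, one finds that the natural map $\tilde U_F/p\to \bar U_F/p\simeq H^1(G_S(F),\mu_p)$ factors through $\hom(G_S(F),\Z_p)/p$, which after identifying $\Z/p\simeq\mu_p$ is precisely the statement that the image lands in $\mathcal A_F$. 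If you want to salvage your constructive intuition, the honest way is to recognise that a norm-coherent sequence $(a_n)$ is an Iwasawa cohomology class in $\varprojlim H^1(G_S(F_n),\Z_p(1))$, and that the passage to a $\Z_p$-extension of $F$ is exactly the twist-and-codescend map above --- not a tower of degree-$p$ Kummer extensions of the $F_n$.
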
 

\begin{proof} 
Recall Kuz'min's result that the natural codescent map gives an isomorphism  
$({\bar U}_\infty)_\Gamma \simeq {\tilde U}_F \subset {\bar U}_F \simeq H^1(G_S(F), \Z_p(1))$. 

On the other hand,  codescent on the (-1)-twist gives a homomorphism  
${\bar U}_\infty(-1)_\Gamma  \rightarrow \hom(G_S(F), \Z_p)$ (see \cite [Theorem 3.7]{FN91}, 
where  $NB({\cal R}_F, \Zp) \simeq {\bar U}_\infty(-1)_\Gamma$   and  
$G({\cal R}_F, \Zp) \simeq \hom(G_S(F), \Z_p)$). 
Hence a composite homomorphism   
$${\bar U}_\infty(-1)_\Gamma/p  \rightarrow \hom(G_S(F), \Z_p)/p  \stackrel{nat} \longrightarrow \hom(G_S(F), \Z/p).$$

Since  ${\bar U}_\infty(-1)_\Gamma/p = ({\bar U}_\infty)_\Gamma/p (-1) \simeq {\tilde U}_F/p(-1)$,  it follows from Proposition 2.3, op. cit. that the induced map  
$$({\tilde U}_F/p)(-1) \longrightarrow \hom(G_S(F), \Z/p) = \hom(G_S(F), \mu_p)(-1)$$  
is just the  (-1)-twist of  
${\tilde U}_F/p  \longrightarrow {\bar U}_F/p$.  As it factors through   $\hom(G_S(F), \Z_p)/p(-1)$, the proof is complete. 
\end{proof}

\subsection{A Kummer radical at infinite level} 
Let  $\cal X_\infty$  be the Galois group over  $F_\infty$  of the maximal abelian $p$-extension of  $F_\infty$  which is unramified outside  $p$-adic primes. It is known that the  $\Lambda$-rank of  $\cal X_\infty$  is equal to  $r_2$  (this is the weak Leopoldt conjecture, which holds in the case of the cyclotomic  $\Z_p$-extension   \cite [Section 13.5] {Wa97}).  
Put  $\fr_\Lambda (\cal X_\infty)$  for its torsion-free part.  
When  $\mu_p \subset F$, the Kummer radical of  
$\fr_\Lambda (\cal X_\infty)$ is clearly related to the problem we are interested in. The determination of this Kummer radical has been performed independently by  Kuz'min (\cite {Ku72})  and Kolster  (\cite {Ko91})  using 
idelic methods. Here, we are going to prove a slightly more precise version of their result using a direct approach. 

First fix  the following notations:  
${\mathfrak L}\cong \hom (\fr_\Lambda ({\cal X}_\infty), \mu_{p^\infty}) \subseteq F_\infty^{\bullet} \otimes \Q_p/\Z_p$,  \\
$\mathfrak N = \varinjlim ({\bar U}_n \otimes \Q_p/\Z_p), \quad  
\mathfrak {\hat N} = \varinjlim ({\hat U}_n \otimes \Q_p/\Z_p), \quad
\mathfrak {\tilde N} = \varinjlim ({\tilde U}_n \otimes \Q_p/\Z_p). \quad$

For all  $n \geq 0$, define  $V_n$  to fit into the following tautological short exact sequence 
$$ 0 \to {\tilde U}_n  \to {\bar U}_n \to V_n \to 0.$$
We then have a commutative diagram 
$$\xymatrix{
0 \ar[r] & {\tilde U}_n \ar[r] \ar[d]^{=} &  {\hat U}_n  \ar[r] \ar@{^{(}->}[d]  & X_\infty^{\Gamma_n} \ar[r]  & 0 \\
0 \ar[r] & {\tilde U}_n \ar[r]  &  {\bar U}_n \ar[r] \ar[d]_{} & V_n \ar[r]  & 0 \\
 &  & {\tilde \oplus}_{v|p}\bar{F}_v^{\bullet}/\tilde{F}_v^{\bullet} \ar[d]  \simeq \Z_p^{s_n-1}  &  &  \\
 &  &  (X_\infty)_{\Gamma_n}  \ar[d]_{} &  &  \\
 &  &  A_n  &  &  \\
}$$
where  $s_n$  is the number of  $p$-adic primes in the layer  $F_n$.   
This immediately provides a short exact sequence 
$$0 \longrightarrow  X_\infty^{\Gamma_n} \longrightarrow V_n  \longrightarrow \Z_p^{s_n-1-\delta_n}  \to 0$$ 
where  $\delta_n :=rk_{\Z_p} (X_\infty)_{\Gamma_n}.$
In particular 
$\tor_{\Z_p}(V_n) \simeq \tor_{\Z_p}(X_\infty^{\Gamma_n})$. 

Note that  $\varinjlim \tor_{\Z_p}(X_\infty^{\Gamma_n}) = X^\circ$  is the maximal finite submodule of  $X_\infty$  
and the groups   $X_\infty^{\Gamma_n} \otimes \Q_p/\Z_p$  stabilize. 
Put  $\Delta_\infty:= \varinjlim  (X_\infty^{\Gamma_n} \otimes \Q_p/\Z_p)$  for the "Gross asymptotical defect". 

\begin{prop} (\cite{Ko91, Ku72}) 
We have an exact sequence of  $\Gamma$-modules
$$\xymatrix{
0 \ar[r] & X^\circ  \ar[r] &  {\mathfrak {\tilde N}}  \ar[rr] \ar@{^{}->>}[rd] & & {\mathfrak {\hat N}} \ar[r] & 
 \Delta_\infty  \ar[r] & 0 \\
  &   &   &  {\mathfrak L} \ar@{^{(}->}[ru] &  & &\\
}$$ 
In particular,  ${\mathfrak L} = {\mathfrak {\hat N}}$  precisely when all the layers  $F_n$  verify Gross' conjecture. 
\end{prop}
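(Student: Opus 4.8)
The plan is to obtain the diagram by passing to the limit in the tautological sequences $0 \to {\tilde U}_n \to {\bar U}_n \to V_n \to 0$ after tensoring with $\Q_p/\Z_p$, and then to identify the three terms of the resulting sequence with $X^\circ$, $\mathfrak L$ and $\Delta_\infty$. First I would tensor with $\Q_p/\Z_p$ the short exact sequence $0 \to X_\infty^{\Gamma_n} \to V_n \to \Z_p^{s_n-1-\delta_n} \to 0$ derived just above from the Sinnott sequence and Kuz'min's Proposition; since $\Z_p^{s_n-1-\delta_n}\otimes\Q_p/\Z_p$ is divisible and the relevant higher Tor vanishes, one gets $0\to X_\infty^{\Gamma_n}\otimes\Q_p/\Z_p \to V_n\otimes\Q_p/\Z_p \to (\Q_p/\Z_p)^{s_n-1-\delta_n}\to 0$, and passing to the direct limit over $n$ gives $0\to \Delta_\infty \to \varinjlim(V_n\otimes\Q_p/\Z_p) \to \varinjlim(\Q_p/\Z_p)^{s_n-1-\delta_n}\to 0$. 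The point is that this last divisible limit quotient measures precisely the gap between $\mathfrak{\hat N}$ and $\mathfrak{\tilde N}$, since $\mathfrak{\hat N} = \varinjlim({\hat U}_n\otimes\Q_p/\Z_p)$ and the top row of the displayed diagram realizes ${\hat U}_n$ as the preimage in ${\bar U}_n$ of $X_\infty^{\Gamma_n}\subset V_n$.

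Next I would tensor $0\to{\tilde U}_n\to{\bar U}_n\to V_n\to 0$ with $\Q_p/\Z_p$ and take $\varinjlim$; because $\tor_{\Z_p}(V_n)\simeq \tor_{\Z_p}(X_\infty^{\Gamma_n})$ is finite, the $\Tor_1$ term contributes $\varinjlim V_n[p^\infty]$, which stabilizes to the maximal finite submodule $X^\circ$ of $X_\infty$ exactly as noted in the excerpt. This yields the exact sequence $0\to X^\circ \to \mathfrak{\tilde N}\to \mathfrak N \to \varinjlim(V_n\otimes\Q_p/\Z_p)\to 0$, and chasing the top row of the diagram (the ${\hat U}_n$ row) through $\otimes\Q_p/\Z_p$ and $\varinjlim$ in parallel identifies the image of $\mathfrak{\tilde N}\to\mathfrak{\hat N}$ and splices in $\mathfrak{\hat N}$ between $\mathfrak{\tilde N}$ and $\Delta_\infty$, giving $0\to X^\circ\to\mathfrak{\tilde N}\to\mathfrak{\hat N}\to\Delta_\infty\to 0$. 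Finally, to bring in $\mathfrak L\cong\hom(\fr_\Lambda(\mathcal X_\infty),\mu_{p^\infty})$, I would use the Kummer pairing at each finite level: Kummer duality identifies $\varinjlim$ of the radicals of the relevant finite quotients of ${\hat U}_n$ with $\hom$ into $\mu_{p^\infty}$ of the torsion-free part of $\mathcal X_\infty$, so that $\mathfrak L$ sits as the image of $\mathfrak{\tilde N}$ in $\mathfrak{\hat N}$, which forces the factorization $\mathfrak{\tilde N}\twoheadrightarrow\mathfrak L\hookrightarrow\mathfrak{\hat N}$ and in particular $\coker(\mathfrak L\hookrightarrow\mathfrak{\hat N})=\Delta_\infty$. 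The stated equivalence ${\mathfrak L}={\mathfrak{\hat N}}$ iff every $F_n$ satisfies Gross' conjecture is then immediate, since $\Delta_\infty=\varinjlim(X_\infty^{\Gamma_n}\otimes\Q_p/\Z_p)$ vanishes precisely when each $\delta_n=0$, i.e. each $(X_\infty)_{\Gamma_n}$ is finite.

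The main obstacle I anticipate is the careful identification of $\mathfrak L$ with the image of $\mathfrak{\tilde N}$ in $\mathfrak{\hat N}$: this requires matching the Kummer radical of $\fr_\Lambda(\mathcal X_\infty)$ with a direct limit built from the finite-level groups ${\tilde U}_n$ and ${\hat U}_n$, which means reconciling the $\Lambda$-module structure (torsion-free quotient at infinite level) with the inverse-limit norm structure on the ${\bar U}_n$'s and their codescent to ${\tilde U}_F$ via Kuz'min's Theorem 7.3. One must check that the $\varprojlim^1$ terms obstructing exactness of these limits vanish or are controlled, and that the Kummer pairing is compatible with the transition maps in $n$ — essentially a compatibility of Kummer theory with the $\Lambda$-action. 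The verification that $X^\circ$ appears exactly (and not merely a finite module containing it) will also require invoking the stabilization of $\tor_{\Z_p}(X_\infty^{\Gamma_n})$, which is standard for $\Lambda$-torsion modules but should be stated. Everything else — the snake-lemma diagram chase and the divisibility arguments — is routine.
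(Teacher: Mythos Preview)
Your derivation of the four-term exact sequence $0\to X^\circ\to\mathfrak{\tilde N}\to\mathfrak{\hat N}\to\Delta_\infty\to 0$ is essentially what the paper does (the paper actually goes more directly via Kuz'min's sequence $0\to{\tilde U}_n\to{\hat U}_n\to X_\infty^{\Gamma_n}\to 0$, tensoring with $\Q_p/\Z_p$ and passing to the limit, rather than routing through $V_n$, but this is cosmetic). Your side remark about $\varprojlim^1$ is misplaced: all limits here are direct limits, which are exact, so no such obstruction arises.

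The genuine gap is the identification of the image of $\mathfrak{\tilde N}\to\mathfrak{\hat N}$ with $\mathfrak L=\hom(\fr_\Lambda\mathfrak X_\infty,\mu_{p^\infty})$. Your paragraph on this is not an argument: the phrase ``Kummer duality identifies $\varinjlim$ of the radicals of the relevant finite quotients of ${\hat U}_n$'' does not specify which quotients, and in fact there is no natural finite-level description of $\fr_\Lambda\mathfrak X_\infty$ that would make such a direct matching work. You correctly flag this as the main obstacle, but you do not supply the missing idea.

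The paper's argument is quite different from what you sketch. Call $\mathfrak L'$ the image of $\mathfrak{\tilde N}$ in $\mathfrak N$ (equivalently in $\mathfrak{\hat N}$). The two key inputs are: (i) by Proposition~1.1, ${\bar U}'_\infty$ is $\Lambda$-free of rank $r_2$, so $\mathfrak{\tilde N}$ is $\Lambda$-cofree and its Kummer dual is $\Lambda^{r_2}$; hence $\hom(\mathfrak L',\mu_{p^\infty})$, being a quotient of $\Lambda^{r_2}$ by a finite module, is $\Lambda$-torsion-free of rank $r_2$; and (ii) by Iwasawa's Theorem~15 in \cite{Iw73}, the Kummer dual of $\mathfrak N$ has $\fr_\Lambda\mathfrak X_\infty$ as its \emph{maximal} $\Lambda$-torsion-free quotient. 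Since $\hom(\mathfrak L',\mu_{p^\infty})$ is a torsion-free quotient of $\hom(\mathfrak N,\mu_{p^\infty})$, one gets a surjection $\fr_\Lambda\mathfrak X_\infty\twoheadrightarrow\hom(\mathfrak L',\mu_{p^\infty})$ between torsion-free $\Lambda$-modules of the same rank $r_2$, hence an isomorphism, and $\mathfrak L'=\mathfrak L$. Neither of these two inputs appears in your proposal; without them the identification cannot be carried out.
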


\begin{proof}
Tensoring with  $\Q_p/\Z_p$  the exact sequence  
$0 \to {\tilde U}_n  \to {\hat U}_n \to X_\infty^{\Gamma_n} \to 0$  
of Proposition 1.5   we get: 
$$0 \to \tor_{\Z_p}(X_\infty^{\Gamma_n}) \to {\tilde U}_n \otimes \Q_p/\Z_p  \to {\hat U}_n \otimes \Q_p/\Z_p 
\to X_\infty^{\Gamma_n} \otimes \Q_p/\Z_p \to 0.$$ 
Hence, passing to the direct limit over  $n$  yields  
$$0 \to X^\circ  \to  {\mathfrak {\tilde N}} \to {\mathfrak {\hat N}} \to \Delta_\infty  \to 0$$
where  $X^\circ$  is the maximal finite submodule of  $X_\infty$. 
Now it remains to show that the cokernel of the first map  $ X^\circ  \to  {\mathfrak {\tilde N}}$  is precisely  ${\mathfrak L}$. 
Proceeding in the same way as before from the exact sequence 
$$ 0 \to {\tilde U}_n  \to {\bar U}_n \to V_n \to 0$$
we get: 
$$0 \to X^\circ  \to  {\mathfrak {\tilde N}} \to {\mathfrak N} \to 
{\mathfrak V}:= \varinjlim (V_n \otimes \Q_p/\Z_p)  \to 0$$
which we split into two short exact sequences: 
$$0 \to X^\circ  \to  {\mathfrak {\tilde N}} \to {\mathfrak L'} \to 0  
\quad \textrm { and } \quad 
0 \to {\mathfrak L'} \to {\mathfrak N} \to {\mathfrak V}  \to 0.$$
Write the Kummer dual of the first sequence: 
$$0 \to \hom ({\mathfrak L'}, \mu_{p^\infty}) \to \hom ({\mathfrak {\tilde N}}, \mu_{p^\infty}) 
\to \hom (X^\circ, \mu_{p^\infty}) \to 0.$$
By definition, ${\tilde U}_n = ({\overline U}_\infty)_{\Gamma_n}$. 
In view of the properties of   ${\overline U}_\infty$  (Proposition 1.1), the module  ${\mathfrak {\tilde N}}$  is a co-free module over  
$\Lambda$ of co-rank  $r_2$:  $\hom ({\mathfrak {\tilde N}}, \mu_{p^\infty}) \simeq \Lambda^{r_2}$. 
Therefore  $\hom ({\mathfrak L'}, \mu_{p^\infty})$  is  $\Lambda$-torsion-free of the same rank  $r_2$. 
Now, the Kummer dual of the second sequence: 
$$0 \to \hom ({\mathfrak V}, \mu_{p^\infty}) \to \hom ({\mathfrak N}, \mu_{p^\infty}) 
\to \hom ({\mathfrak L'}, \mu_{p^\infty}) \to 0$$  
shows that  $\hom ({\mathfrak L'}, \mu_{p^\infty})$  is a quotient module of  $\hom ({\mathfrak N}, \mu_{p^\infty})$, and it is known that  $\fr_{\Lambda} {\mathfrak X}_\infty$  is the maximal  $\Lambda$-torsion free quotient of the Galois group  $\hom ({\mathfrak N}, \mu_{p^\infty})$  (\cite [Theorem 15 ]{Iw73}). Accordingly, we have a surjective map  
$\fr_{\Lambda} {\mathfrak X}_\infty \to  \hom ({\mathfrak L'}, \mu_{p^\infty})$,   
which must be an isomorphism since they both have    $\Lambda$-rank  $r_2$. 
Hence  ${\mathfrak L'} = {\mathfrak L}$,  as was to be shown. 
\end{proof}

\textbf{Remark}: 
The structure theorem for finitely generated  $\Lambda$-modules shows the existence of a finite module  $H$ such that  
$$ 0 \to \fr_{\Lambda} {\mathfrak X}_\infty  \to \Lambda^{r_2} \to H \to 0.$$ 
The above proof provides such an exact sequence in a canonical way as well as an isomorphism  
$H \simeq \hom (X^\circ, \mu_{p^\infty}).$ This Kummer duality between  $H$  and  $X^\circ$  was already implicit in  \cite {Iw73}. It is also known (op. cit.) that  $X^\circ$  is isomorphic to the (asymptotical) capitulation kernel  $\ker (A_m \to \varinjlim A_n)$  for  $m$  large  \cite [page 1240 ] {Gre78}.

\section{Lower bounds for the Kummer radical  ${\cal A}_F$} 
In this section we suppose that  $F$  contains  $\mu_p$.  
In order to get information about the radical  ${\cal A}_F$, we are first going to do "descent" from the module  ${\mathfrak L}$  in the same way as in \cite {Gre78}  (but we will need somewhat more precise results). 
Recall the notations : 
${\tilde U}'_F = {\tilde U}_F/\mu(F)$, ${\hat U}'_F = {\hat U}_F/\mu(F)$  and let  
${\cal A'}_F := {\cal A}_F/(\mu(F)/p)$,  where   $\mu(F)/p \simeq \mu(F) F^{\bullet p}/ F^{\bullet p}$  
is the Kummer radical of the first layer of the cyclotomic  $\Z_p$-extension of $F$.
 
\subsection{A lower bound in terms of universal norms} 
The starting point will be the exact sequence 
$$0 \to X^\circ  \to  {\mathfrak {\tilde N}} \to {\mathfrak L} = {\mathfrak L}' \to 0$$
 of Proposition 1.7, where we recall that  
$\mathfrak {\tilde N} = \varinjlim ({\tilde U}_n \otimes \Q_p/\Z_p)$
and 
${\mathfrak L} \cong \hom (\fr_\Lambda ({\cal X}_\infty), \mu_{p^\infty}) \subseteq F_\infty^{\bullet} \otimes \Q_p/\Z_p$.

For all  rational integers  $i$, we consider the  $i$-fold Tate-twists: 
\begin{equation} 
0 \to X^\circ(i)  \to  {\mathfrak {\tilde N}}(i) \to {\mathfrak L}(i) \to 0. 
\end{equation} 

\begin{thm}
For  $i \in \Z$, we have an exact sequence: 
$$0 \to  X^{\circ \Gamma}[p](i) \to  {\tilde U}'_F/p(i)  \to  
\div({\mathfrak L}(i)^\Gamma)[p] \to X^\circ(i)^\Gamma/p \to  0$$
where $\div(-)$  denotes the maximal divisible subgroup of  $(-).$  
\end{thm}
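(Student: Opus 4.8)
The starting point is the $i$-fold Tate-twisted short exact sequence (1), namely $0 \to X^\circ(i) \to {\mathfrak {\tilde N}}(i) \to {\mathfrak L}(i) \to 0$. The idea is to take $\Gamma$-cohomology (equivalently, to pass to $\Gamma$-invariants and coinvariants) and then to read off the $p$-torsion. First I would write the long exact sequence
$$ 0 \to X^\circ(i)^\Gamma \to {\mathfrak {\tilde N}}(i)^\Gamma \to {\mathfrak L}(i)^\Gamma \to X^\circ(i)_\Gamma \to {\mathfrak {\tilde N}}(i)_\Gamma \to {\mathfrak L}(i)_\Gamma \to 0 $$
(using that $\Gamma$ has cohomological dimension $1$ acting on these discrete modules). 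The key identification is ${\mathfrak {\tilde N}}(i)^\Gamma \simeq {\tilde U}'_F \otimes \Q_p/\Z_p (i)$ up to controlled finite error, which follows because ${\mathfrak {\tilde N}} = \varinjlim({\tilde U}_n \otimes \Q_p/\Z_p)$, ${\tilde U}_n = ({\bar U}_\infty)_{\Gamma_n}$, and $\hom({\mathfrak {\tilde N}}, \mu_{p^\infty}) \simeq \Lambda^{r_2}$ is $\Lambda$-cofree (Proposition 1.7), so its $\Gamma$-invariants are computed by the snake lemma applied to multiplication by $\gamma - 1$ on $(\Q_p/\Z_p)^{r_2}$-type modules. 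More precisely, since ${\tilde U}'_F$ is $\Z_p$-free of rank $r_1 + r_2$, one has ${\tilde U}'_F \otimes \Q_p/\Z_p (i) \simeq (\Q_p/\Z_p)^{r_1+r_2}(i)$, and I want to show its $p$-torsion ${\tilde U}'_F/p(i)$ sits inside the relevant part of the sequence.

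Next, I would focus the six-term sequence on $p$-torsion. The point is that ${\mathfrak L}(i)^\Gamma$ is a cofinitely generated $\Z_p$-module, so it splits (up to the divisible part) and its $p$-torsion is $\div({\mathfrak L}(i)^\Gamma)[p]$ together with a finite contribution; similarly $X^\circ(i)$ is finite, so $X^\circ(i)^\Gamma = X^{\circ\Gamma}(i)$ and $X^\circ(i)_\Gamma$ are finite with $X^{\circ\Gamma}(i)[p] = X^{\circ\Gamma}[p](i)$. Feeding the identification ${\mathfrak {\tilde N}}(i)^\Gamma \simeq {\tilde U}'_F \otimes \Q_p/\Z_p (i)$ into the exact sequence $0 \to X^{\circ\Gamma}(i) \to {\tilde U}'_F \otimes \Q_p/\Z_p(i) \to {\mathfrak L}(i)^\Gamma \to X^\circ(i)_\Gamma \to \cdots$, I would apply the snake lemma to multiplication by $p$ across the short exact sequence $0 \to X^{\circ\Gamma}(i) \to {\tilde U}'_F\otimes\Q_p/\Z_p(i) \to \im \to 0$ and then across $0 \to \im \to {\mathfrak L}(i)^\Gamma \to X^\circ(i)_\Gamma \to 0$. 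Since ${\tilde U}'_F \otimes \Q_p/\Z_p(i)$ is divisible, $({\tilde U}'_F\otimes\Q_p/\Z_p(i))[p] = {\tilde U}'_F/p(i)$ and its mod-$p$ quotient vanishes; the first snake gives $0 \to X^{\circ\Gamma}[p](i) \to {\tilde U}'_F/p(i) \to \im[p] \to X^{\circ\Gamma}(i)/p \to 0$, and the second gives $\im[p] \to {\mathfrak L}(i)^\Gamma[p] \to X^\circ(i)_\Gamma[p]$ with the cokernel controlled by $\im/p$. Splicing these and using that $X^{\circ\Gamma}(i)/p \cong X^\circ(i)^\Gamma/p$ (finiteness plus the fact that the Herbrand quotient of $X^\circ(i)$ over $\Gamma$ is $1$, so $|X^\circ(i)^\Gamma/p| = |X^\circ(i)_\Gamma/p|$ interacts correctly) should collapse the picture to the four-term sequence in the statement, with $\div({\mathfrak L}(i)^\Gamma)[p]$ appearing because ${\mathfrak L}(i)^\Gamma[p] = \div({\mathfrak L}(i)^\Gamma)[p]$ once the finite non-divisible part is absorbed into the $X^\circ$ terms.

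The main obstacle I anticipate is the bookkeeping around the \emph{finite} pieces: precisely matching ${\mathfrak L}(i)^\Gamma[p]$ with $\div({\mathfrak L}(i)^\Gamma)[p]$ requires showing that the non-divisible (finite) part of ${\mathfrak L}(i)^\Gamma$ is exactly accounted for by the cokernel map to $X^\circ(i)_\Gamma$, i.e.\ that the composite ${\mathfrak L}(i)^\Gamma \to X^\circ(i)_\Gamma$ identifies ${\mathfrak L}(i)^\Gamma/\div$ with a submodule of $X^\circ(i)_\Gamma$ whose contribution cancels in the $p$-torsion sequence. This is where the $\Lambda$-cofreeness of $\hom({\mathfrak {\tilde N}},\mu_{p^\infty})$ and the identification of $\fr_\Lambda{\mathfrak X}_\infty$ as the maximal torsion-free quotient (used in the proof of Proposition 1.7) must be leveraged: cofreeness forces ${\mathfrak {\tilde N}}(i)_\Gamma$ to be divisible, which kills the potential obstruction at the tail of the six-term sequence and pins down the cokernel of ${\tilde U}'_F/p(i) \to \div({\mathfrak L}(i)^\Gamma)[p]$ as exactly $X^\circ(i)^\Gamma/p$. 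A secondary technical point is checking that all identifications are $\Gamma$-equivariant and compatible with the Tate twist by $i$, but since twisting is exact this is routine once the untwisted case is set up. I would also double-check the edge behavior when $\delta \neq 0$ (failure of Gross's conjecture), since then ${\mathfrak L} \subsetneq {\mathfrak {\hat N}}$ and one must be careful that $\Delta_\infty$ does not sneak into the computation — but as the sequence (1) only involves ${\mathfrak {\tilde N}}$ and ${\mathfrak L}$, this should not actually intervene.
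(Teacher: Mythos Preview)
Your overall strategy is correct and is essentially the paper's: take $\Gamma$-invariants of the twisted sequence (1), then apply the snake lemma for multiplication by $p$. But you miss the one clean observation that makes everything collapse, and in its place you introduce a detour (the second snake, the Herbrand-quotient remark, the claim ${\mathfrak L}(i)^\Gamma[p] = \div({\mathfrak L}(i)^\Gamma)[p]$) that is both unnecessary and, as written, not correct.

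The point you are circling around is this: since ${\mathfrak {\tilde N}}$ is $\Lambda$-cofree, ${\mathfrak {\tilde N}}(i)^\Gamma$ is \emph{divisible} and ${\mathfrak {\tilde N}}(i)_\Gamma$ is \emph{zero} (not merely divisible). Hence the image $\im$ of ${\mathfrak {\tilde N}}(i)^\Gamma$ in ${\mathfrak L}(i)^\Gamma$ is divisible with finite cokernel (a quotient of $X^\circ(i)_\Gamma$), so automatically $\im = \div({\mathfrak L}(i)^\Gamma)$. With this in hand, your \emph{first} snake sequence
\[
0 \to X^{\circ\Gamma}[p](i) \to {\mathfrak {\tilde N}}(i)^\Gamma[p] \to \im[p] \to X^\circ(i)^\Gamma/p \to 0
\]
is already the theorem, once you substitute $\im[p]=\div({\mathfrak L}(i)^\Gamma)[p]$. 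No second snake, no splicing, no comparison of $X^\circ(i)^\Gamma$ with $X^\circ(i)_\Gamma$ is needed; in particular the assertion ${\mathfrak L}(i)^\Gamma[p]=\div({\mathfrak L}(i)^\Gamma)[p]$ is false in general and should be dropped. Finally, the identification ${\mathfrak {\tilde N}}^\Gamma \simeq {\tilde U}'_F\otimes\Q_p/\Z_p$ is an honest isomorphism (this is the paper's Lemma 2.2, proved by comparing $\Z_p$-coranks after noting the injection ${\tilde U}'_F\otimes\Q_p/\Z_p \hookrightarrow {\mathfrak {\tilde N}}^\Gamma$), not merely ``up to controlled finite error''; your hedging there is unwarranted.
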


Notice that, since  $\mu_p \subset F$,  the twist  $i$  outside is purely cosmetic for the Galois action above  $F$, but of course not for the action below  $F$. 

\begin{proof}
The exact sequence (1) provides us with: 
$$0 \to X^\circ(i)^\Gamma  \to  {\mathfrak {\tilde N}}(i)^\Gamma \to {\mathfrak L}(i)^\Gamma \to 
X^\circ(i)_\Gamma  \to  {\mathfrak {\tilde N}}(i)_\Gamma \to \cdots $$
Let  $N_i$  be the cokernel of the first map on the left: 
$0 \to X^\circ(i)^\Gamma  \to  {\mathfrak {\tilde N}}(i)^\Gamma \to N_i \to 0$. 
As noticed before (Proposition 1.1), the  $\Lambda$-module  ${\mathfrak {\tilde N}}$  is cofree. 
Hence  ${\mathfrak {\tilde N}}(i)_\Gamma$  is trivial whereas  ${\mathfrak {\tilde N}}(i)^\Gamma$  is divisible and therefore   
$N_i = \div({\mathfrak L}(i)^\Gamma)$. 
Consequently, we have: 
$$0 \to X^\circ(i)^\Gamma  \to  {\mathfrak {\tilde N}}(i)^\Gamma \to \div ({\mathfrak L}(i)^\Gamma) \to 0.$$
Applying the snake lemma to multiplication-by-$p$ gives  
$$0 \to {X^\circ(i)}^\Gamma[p]={X^\circ}^\Gamma[p](i)  \to  
{\mathfrak {\tilde N}}(i)^\Gamma[p]={\mathfrak {\tilde N}}^\Gamma[p](i) \to 
\div ({\mathfrak L}(i)^\Gamma)[p] \to {X^\circ(i)}^\Gamma/p \to 0.$$
To finish the proof of the Theorem, it remains to recognize  ${\mathfrak {\tilde N}}^\Gamma$: 

\begin{lemma} 
${\tilde U}'_F \otimes \Q_p/\Z_p \simeq {\mathfrak {\tilde N}}^\Gamma$. 
\end{lemma}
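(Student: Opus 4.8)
The goal is to identify the $\Gamma$-invariants of $\mathfrak{\tilde N} = \varinjlim({\tilde U}_n\otimes\Q_p/\Z_p)$ with ${\tilde U}'_F\otimes\Q_p/\Z_p$. The starting observation is that tensoring with $\Q_p/\Z_p$ kills the torsion, so ${\tilde U}_n\otimes\Q_p/\Z_p = {\tilde U}'_n\otimes\Q_p/\Z_p$ and likewise $\mathfrak{\tilde N} = \varinjlim({\tilde U}'_n\otimes\Q_p/\Z_p)$; this reduces everything to the apostrophe'd (torsion-free) modules. Next I would recall from Section 1 that ${\tilde U}'_n \simeq ({\bar U}'_\infty)_{\Gamma_n}$ via the codescent map, and that ${\bar U}'_\infty$ is $\Lambda$-free of rank $r_1+r_2$ (Proposition 1.1). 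Hence $\mathfrak{\tilde N} = \varinjlim\bigl(({\bar U}'_\infty)_{\Gamma_n}\otimes\Q_p/\Z_p\bigr)$, and since $({\bar U}'_\infty)_{\Gamma_n} = {\bar U}'_\infty/\omega_n$ with $\omega_n = (1+T)^{p^n}-1$, one computes $\mathfrak{\tilde N} \simeq (\Q_p/\Z_p \otimes_{\Z_p}\Lambda/\omega_n)^{r_1+r_2}$ in the limit, i.e. $\mathfrak{\tilde N}$ is $\Lambda$-cofree of corank $r_1+r_2$ (this is exactly the cofreeness already invoked in the proof of Proposition 1.7 and of Theorem 2.1).

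The core of the argument is then a $\Gamma$-cohomology comparison between level $0$ and level $\infty$. Concretely: for each $n$ the norm/trace maps give ${\tilde U}'_F = ({\bar U}'_\infty)_\Gamma \to ({\bar U}'_\infty)_{\Gamma_n} = {\tilde U}'_n$, and I want to show that after $\otimes\,\Q_p/\Z_p$ and passage to $\Gamma$-invariants in the limit, the natural map ${\tilde U}'_F\otimes\Q_p/\Z_p \to \mathfrak{\tilde N}^\Gamma$ is an isomorphism. Because ${\bar U}'_\infty$ is $\Lambda$-free, it suffices to verify the statement for $\Lambda$ itself, where it is the elementary identity $\varinjlim_n\bigl((\Lambda/\omega_n)\otimes\Q_p/\Z_p\bigr)^\Gamma \simeq (\Lambda/\omega_0)\otimes\Q_p/\Z_p = \Z_p\otimes\Q_p/\Z_p = \Q_p/\Z_p$: the transition maps in the direct system are multiplication by $\omega_n/\omega_0 = \nu_{n}$ (the norm element), the $\Gamma$-action on the $n$-th term is through $\Gamma/\Gamma_n$, and taking invariants picks out exactly the image of the bottom layer; the direct limit over $n$ stabilises since $\Q_p/\Z_p$ is divisible and $\nu_n \equiv p^n \pmod{T}$-type growth forces the invariants to be the divisible hull of the level-$0$ piece, which is already $\Q_p/\Z_p$. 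Summing over the $r_1+r_2$ free generators and twisting back by $\mu(F)$ (which, since $\mu_p\subset F$, only affects the Galois action, not the underlying group) yields ${\tilde U}'_F\otimes\Q_p/\Z_p\simeq\mathfrak{\tilde N}^\Gamma$.

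The main obstacle I anticipate is the bookkeeping of the $\Gamma$-action under the direct limit: one must be careful that "invariants" and "direct limit" interact correctly, i.e. that $\bigl(\varinjlim M_n\bigr)^\Gamma = \varinjlim (M_n^{\Gamma})$ here — this is not automatic for a general inverse-then-direct system, and works in our case precisely because of $\Lambda$-freeness of ${\bar U}'_\infty$ together with divisibility of $\Q_p/\Z_p$ (so that the relevant $H^1$'s of $\Gamma_n$ acting on the $\Lambda/\omega_n$-pieces vanish in the limit). An equivalent and perhaps cleaner route is to dualise: $\hom(\mathfrak{\tilde N},\mu_{p^\infty})\simeq\Lambda^{r_1+r_2}$ was already established, and $\hom(\mathfrak{\tilde N}^\Gamma,\mu_{p^\infty}) = \hom(\mathfrak{\tilde N},\mu_{p^\infty})_\Gamma \simeq \Z_p^{r_1+r_2}$, which matches $\hom({\tilde U}'_F\otimes\Q_p/\Z_p,\mu_{p^\infty}) \simeq {\tilde U}'_F \simeq ({\bar U}'_\infty)_\Gamma \simeq \Z_p^{r_1+r_2}$; chasing the canonical maps shows the identification is the expected one. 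I would present whichever of these two dual formulations keeps the $\Gamma$-action manifestly correct.
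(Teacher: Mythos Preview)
Your approach is correct and takes a genuinely more direct route than the paper's. Both proofs establish the isomorphism by (a) showing the natural map ${\tilde U}'_F \otimes \Q_p/\Z_p \to \mathfrak{\tilde N}^\Gamma$ is injective and (b) matching coranks. For (a) the paper argues that ${\tilde U}'_F$ is a $\Z_p$-direct summand of ${\tilde U}'_n$ (surjectivity of the norm plus $\Z_p$-freeness), which your $\Lambda$-freeness computation recovers via the identification of the transition maps with multiplication by $\nu_n$. The real difference is in (b): you read off both coranks as $r_1+r_2$ directly from the $\Lambda$-freeness of ${\bar U}'_\infty$ (Proposition~1.1), either by the explicit direct-limit calculation or, more cleanly, by Pontryagin duality $(\mathfrak{\tilde N}^\Gamma)^* \simeq (\mathfrak{\tilde N}^*)_\Gamma \simeq (\Lambda^{r_1+r_2})_\Gamma$. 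The paper instead recomputes $\rank_{\Z_p}{\tilde U}'_F$ via the Kuz'min and Sinnott exact sequences, and $\corank_{\Z_p}\mathfrak{\tilde N}^\Gamma$ via the sequence $0 \to (X^\circ)^\Gamma \to \mathfrak{\tilde N}^\Gamma \to \div(\mathfrak{L}^\Gamma) \to 0$ together with weak Leopoldt for $\mathfrak{L}$. Your route is more self-contained for this lemma; the paper's route has the side benefit of making the number $r_2$ appear explicitly (note that $\mu_p \subset F$ forces $r_1=0$, so your $r_1+r_2$ equals their $r_2$ --- you should say this). One point to tighten: in the dual version, ``chasing the canonical maps'' should be made explicit --- the natural map ${\tilde U}'_F \otimes \Q_p/\Z_p \to \mathfrak{\tilde N}$ is the structural map from the $n=0$ term of the direct system, and its Pontryagin dual is the canonical projection $\mathfrak{\tilde N}^* \simeq \Lambda^{r_1+r_2} \twoheadrightarrow (\Lambda/\omega_0)^{r_1+r_2}$, which is visibly an isomorphism after taking $\Gamma$-coinvariants.
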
 
{\it Proof.} 
At finite levels, we have  ${\hat U}'_n \hookrightarrow {\hat U}'_{n+1}$  (this is immediate from Lemma 1.4). 
Hence we also have  ${\tilde U}'_n \hookrightarrow {\tilde U}'_{n+1}$. 
By definition 1.2, the norm maps  ${\tilde U}'_n \to {\tilde U}'_F$  are surjective. 
On the other hand, by Proposition 1.1, all the  ${\tilde U}'_n$ are  free $\Z_p$-modules.  
Hence  ${\tilde U}'_F$  is a direct summand of  ${\tilde U}'_n$   and the cokernel of  the natural injection  ${\tilde U}'_F \to {\tilde U}'_n$  is torsion-free. Accordingly, the maps 
 ${\tilde U}'_F \otimes \Q_p/\Z_p  \to  {\tilde U}'_n \otimes \Q_p/\Z_p$  
 are injective and so is the map:  
 ${\tilde U}'_F \otimes \Q_p/\Z_p \hookrightarrow {\mathfrak {\tilde N}}^\Gamma$. 

Moreover, as previously explained,  ${\mathfrak {\tilde N}}^\Gamma$  is divisible and to show the equality all we need to do is compare the  $\Z_p$-coranks of these two modules: the exact sequence  
$$0 \to {X^\circ}^\Gamma  \to  {\mathfrak {\tilde N}}^\Gamma \to \div ({\mathfrak L}^\Gamma) \to 0$$ 
shows that  $\corank_{\Z_p} ({\mathfrak {\tilde N}}^\Gamma) = r_2$.  On the other hand: 
$$\begin{array}{llll} 
\rank_{\Z_p}({\tilde U}'_F) & = & \rank_{\Z_p}({\hat U}'_F) - \rank_{\Z_p}(X_\infty)^\Gamma & \quad (Kuz'min) \\
 & = & \rank_{\Z_p}({\bar U}'_F) - (s-1) +  \rank_{\Z_p}({X_\infty})_\Gamma -\rank_{\Z_p}(X_\infty)^\Gamma & \quad (Sinnott) \\  
 & = & r_2 +  \rank_{\Z_p}{(X_\infty})_\Gamma -\rank_{\Z_p}(X_\infty)^\Gamma,&  \\  
\end{array}$$ 
where  $s$  is the number of $p$-adic primes in  $F$. 
Besides, since  $X_\infty$  is a  $\Lambda$-torsion module, denoting by  $Y_\infty$  the "non-$\mu$-part" of  $X_\infty$, we have a "pseudo-exact" sequence  
$$0 \to {(X_\infty})^\Gamma  \to Y_\infty \stackrel{\gamma-1} \longrightarrow Y_\infty \to  {(X_\infty})_\Gamma \to 0$$  
of finitely generated  $\Z_p$-modules which shows that  ${(X_\infty})^\Gamma$  and  ${(X_\infty})_\Gamma$  have the same   $\Z_p$-rank. Hence finally 
the   $\Z_p$-corank of  ${\tilde U}'_F \otimes \Q_p/\Z_p$  is also  $r_2$. 
\end{proof}

We now propose to give interpretations of  $\div({\mathfrak L}(i)^\Gamma)[p]$  in terms of radicals in the three special cases  $i=-1, 0, 1$.  We have already introduced   
${\cal A}'_F := {\cal A}_F/(\mu(F)/p)$  and   
$({\hat U}'_F/p) = ({\hat U}_F/p)/(\mu(F)/p)$, 
where  $\mu(F)/p \cong  \mu(F) F^{\bullet p}/F^{\bullet p}$  is the image of  
$\mu_{p^\infty}$  in  $F^{\bullet}/F^{\bullet p}.$  
We also want to introduce a modified tate kernel  ${\cal T}'_F$. Using elementary properties of symbols, one easily shows that  $\mu(F) F^{\bullet p}/F^{\bullet p}$  is also contained in  ${\cal T}_F$, thus we define  ${\cal T}'_F := {\cal T}_F/(\mu(F)/p)$.

\begin{prop}   We have the following equalities:\\
(a) if  $i=-1$ and we assume Leopoldt's conjecture for  $F$, then $\div({\mathfrak L}(-1)^\Gamma)[p](1)={\cal A}'_F$. \\
(b) if  $i=0 $ and we assume Gross' conjecture for  $F$, then $\div({\mathfrak L}^\Gamma)[p]={\hat U}'_F/p$. \\
(c) if  $i=1,$ then   $\div({\mathfrak L}(1)^\Gamma)[p](-1)={\cal T}'_F$. 
\end{prop}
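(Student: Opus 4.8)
The plan is to obtain all three equalities by one descent mechanism and then recognise the resulting radicals. Twisting the exact sequence $(1)$ by $i\in\{-1,0,1\}$ and repeating the $\Gamma$-invariance computation from the proof of Theorem~2.1, one gets $0\to X^\circ(i)^\Gamma\to{\mathfrak{\tilde N}}(i)^\Gamma\to\div({\mathfrak L}(i)^\Gamma)\to 0$ with ${\mathfrak{\tilde N}}(i)^\Gamma$ divisible of corank $r_2$; hence $\div({\mathfrak L}(i)^\Gamma)[p]$ is an $\F_p$-space of dimension $r_2$, the finite terms $X^\circ(i)^{\Gamma}$ contributing nothing (mod $p$ the twist is invisible, since $\kappa(\gamma)\equiv 1\pmod p$ because $\mu_p\subset F$). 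The substance is then to identify this $\F_p$-space, after untwisting by $(-i)$, with the radical of a concrete abelian $p$-extension of $F$: the point is that by construction ${\mathfrak L}\subseteq{\cal K}=F_\infty^\bullet\otimes\Q_p/\Z_p$ is Kummer-dual to $\fr_\Lambda({\cal X}_\infty)$, so $\div({\mathfrak L}(i)^\Gamma)[p]$ is, up to a twist immaterial mod $p$, the Pontryagin dual of a $/p$-quotient of a $\Gamma$-coinvariant module of $\fr_\Lambda({\cal X}_\infty)$, and the latter is controlled by Iwasawa descent for ${\cal X}_\infty$.

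For (b) I would bypass the dualisation and argue by a dimension count. Both ${\hat U}'_F/p$ and $\div({\mathfrak L}^\Gamma)[p]$ embed naturally into $F^\bullet/F^{\bullet p}$ (the latter via ${\mathfrak L}\subseteq{\cal K}$), and Lemma~1.4 — the characterisation $N_{F_v/\Q_p}(x)\in p^\Z$ of ${\hat U}_F$ — is exactly what forces the Kummer image of ${\hat U}_F$ to lie in the torsion-free part ${\mathfrak L}$ of the Iwasawa module, so that ${\hat U}'_F/p\subseteq\div({\mathfrak L}^\Gamma)[p]$. Since $r_1=0$ (as $\mu_p\subset F$) and Gross' conjecture for $F$ gives $\delta=0$, one has $\dim_{\F_p}{\hat U}'_F/p=\rank_{\Z_p}{\hat U}'_F=r_2=\dim_{\F_p}\div({\mathfrak L}^\Gamma)[p]$, whence equality. (Alternatively: compare the $i=0$ case of Theorem~2.1 with the mod-$p$ reduction of Proposition~1.5; Gross for $F$ makes $X_\infty^\Gamma$ finite, hence a submodule of $X^\circ$, so $X^{\circ\Gamma}=X_\infty^\Gamma$, and a morphism of the two four-term sequences provided by Proposition~1.7 forces the middle terms to coincide.)

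For (a) and (c) one uses the Kummer-dual description. When $i=-1$, $\mu_{p^\infty}(-1)=\Q_p/\Z_p$, so ${\mathfrak L}(-1)^\Gamma$ is an honest Pontryagin dual and $\div({\mathfrak L}(-1)^\Gamma)[p]\cong\big(\fr_{\Z_p}((\fr_\Lambda{\cal X}_\infty)_\Gamma)/p\big)^*$ up to a mod-$p$-invisible twist. Under Leopoldt's conjecture for $F$, Iwasawa descent identifies $\fr_{\Z_p}\big((\fr_\Lambda{\cal X}_\infty)_\Gamma\big)$ with $\hom(G_S(F),\Z_p)$ modulo the line of the cyclotomic character (which disappears over $F_\infty$), the finite discrepancy being absorbed by the $X^\circ$-terms of $(1)$; dualising and twisting by $(1)$ lands in $\hom(G_S(F),\mu_p)=F^\bullet/F^{\bullet p}$, with image the radical of the compositum of the $\Z_p$-extensions of $F$ modulo $\mu(F)/p$, that is ${\cal A}'_F$. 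To make this an equality of subgroups I would check compatibility with the construction in the proof of Lemma~1.6, whose map $({\tilde U}_F/p)(-1)\to\hom(G_S(F),\Z_p)/p(-1)$ is precisely the left-hand arrow of the $i=-1$ case of Theorem~2.1. When $i=1$, ${\mathfrak L}(1)=\hom(\fr_\Lambda{\cal X}_\infty,\mu_{p^\infty}^{\otimes 2})$ and the corresponding descent object is Kummer-dual to $H^1(G_S(F),\Z_p(2))/p\subseteq H^1(G_S(F),\mu_p^{\otimes 2})\cong(F^\bullet/F^{\bullet p})(1)$; by Tate's theorem on $K_2$ the image, untwisted by $(-1)$, is the kernel of cup-product with the class of $\zeta_p$ in $K_2F/p$, that is ${\cal T}'_F$. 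No conjecture is needed here because $H^2(G_S(F),\Z_p(2))$ is finite unconditionally (Soulé), so the rank count already pins down the cokernel.

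The step I expect to be the main obstacle is the passage from isomorphisms of abstract $\F_p$-modules to \emph{equalities} of subgroups of $F^\bullet/F^{\bullet p}$: one must verify that the inclusion ${\mathfrak L}\subseteq{\cal K}$ induces, on $i$-twisted $\Gamma$-invariants, exactly the natural embeddings of ${\cal A}'_F$, ${\hat U}'_F/p$ and ${\cal T}'_F$, and that the descent exact sequences are compatible with those of Theorem~2.1, Proposition~1.5 and Lemma~1.6. In (a) and (c) this is where the real content sits — matching the Kummer dual of the descent of ${\cal X}_\infty$ against the classical class-field-theoretic description of ${\cal A}_F$ and against Tate's theorem for ${\cal T}_F$, with Leopoldt (resp. the automatic finiteness in weight two) used to annihilate the obstruction. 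The accompanying twist bookkeeping — which $\Gamma$-action is in play below versus above $F$, and the fact that modulo $p$ all these twists coincide — is mechanical but easy to get wrong.
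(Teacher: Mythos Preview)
Your overall architecture is the paper's: for each $i$ one computes $\div({\mathfrak L}(i)^\Gamma)[p]$ by descent on $\fr_\Lambda({\mathfrak X}_\infty)$ and then identifies the result inside the universal Kummer radical ${\cal K}$. You also put your finger on the real issue, namely promoting abstract isomorphisms to equalities of subgroups of $F^\bullet/F^{\bullet p}$. Two points, however, need tightening.

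\medskip
\textbf{Part (b).} Your first argument has a gap. Lemma~1.4 characterises ${\hat U}_F$ inside ${\bar U}_F$, but it does not, by itself, force the image of ${\hat U}'_F\otimes\Q_p/\Z_p$ in ${\mathfrak{\hat N}}^\Gamma$ to land in ${\mathfrak L}^\Gamma$ (recall ${\mathfrak L}\subsetneq{\mathfrak{\hat N}}$ with cokernel $\Delta_\infty$ in general). What actually yields the inclusion ${\hat U}'_F\otimes\Q_p/\Z_p\hookrightarrow{\mathfrak L}^\Gamma$ is the comparison of two four-term sequences that you sketch as your ``alternative'': tensor Proposition~1.5 by $\Q_p/\Z_p$ and match it, via Lemma~2.2, against the $\Gamma$-invariants of $0\to X^\circ\to{\mathfrak{\tilde N}}\to{\mathfrak L}\to 0$; Gross for $F$ kills $X_\infty^\Gamma\otimes\Q_p/\Z_p$ and one reads off $0\to {\hat U}'_F\otimes\Q_p/\Z_p\to{\mathfrak L}^\Gamma\to (X^\circ)_\Gamma\to 0$, whence the claim. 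This is exactly the paper's Lemma~2.4, so your ``alternative'' is in fact the main road.

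\medskip
\textbf{Part (a).} Your duality step $\div({\mathfrak L}(-1)^\Gamma)[p]\cong\bigl(\fr_{\Z_p}((\fr_\Lambda{\mathfrak X}_\infty)_\Gamma)/p\bigr)^*$ is correct (and needs no twist, since ${\mathfrak L}(-1)=\hom(\fr_\Lambda{\mathfrak X}_\infty,\Q_p/\Z_p)$). But the identification ``$\fr_{\Z_p}((\fr_\Lambda{\mathfrak X}_\infty)_\Gamma)\simeq \hom(G_S(F),\Z_p)$ modulo the cyclotomic line'' is not right as stated: $\hom(G_S(F),\Z_p)$ lives on the character side, while the object you want is on the Galois-group side. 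What one must show is $\fr_{\Z_p}\bigl((\fr_\Lambda{\mathfrak X}_\infty)_\Gamma\bigr)\simeq \fr_{\Z_p}{\mathfrak X}'_F$ where ${\mathfrak X}'_F=({\mathfrak X}_\infty)_\Gamma$; then ${\cal A}'_F=\hom(\fr_{\Z_p}{\mathfrak X}'_F,\mu_p)$ by definition. The paper obtains this by a small but essential diagram chase: Leopoldt makes $(\tor_\Lambda{\mathfrak X}_\infty)_\Gamma$ finite, hence the natural map $\epsilon_2:(\fr_\Lambda{\mathfrak X}_\infty)_\Gamma\to\fr_{\Z_p}{\mathfrak X}'_F$ exists with finite kernel, and the sequence $0\to\fr_\Lambda{\mathfrak X}_\infty\to\Lambda^{r_2}\to H\to 0$ of the Remark after Proposition~1.7 pins down $\ker\epsilon_2\simeq H^\Gamma$. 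Your appeal to Lemma~1.6 for compatibility is on the right track, but the actual content is this $\epsilon_2$ argument.

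\medskip
\textbf{A methodological difference.} For (a) and (c) the paper first reduces from ${\mathfrak L}$ to ${\cal K}=F_\infty^\bullet\otimes\Q_p/\Z_p$ via the corank equality $\div({\cal K}(i)^\Gamma)=\div({\mathfrak L}(i)^\Gamma)$ for $i\neq 0$ (Leopoldt for $i=-1$, Tate's $K_2$ results for $i=1$). Working in ${\cal K}$ has the advantage that ${\cal K}^\Gamma[p]$ is canonically $(F^\bullet/F^{\bullet p})/(\mu(F)/p)$, so the ``equality of subgroups'' issue you flag is handled for free: for (c) one reads off ${\cal T}'_F$ directly from Tate's exact sequence $0\to\div({\cal K}(1)^\Gamma)\to{\cal K}(1)^\Gamma\to K_2F\{p\}\to 0$. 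Your route via $H^1(G_S(F),\Z_p(2))$ and Soul\'e's finiteness is equivalent but slightly less direct for the purpose of literally identifying a subgroup of $F^\bullet/F^{\bullet p}$.
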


Recall that if Gross' conjecture is valid for all the $F_n$'s, then  ${\mathfrak L}={\mathfrak {\hat N}}$.  

{\it Proof.}
Properties (a)  and  (c)  have been explained by Greenberg, starting from his conjecture  
(\cite [page 1238]{Gre78}; see also \cite[page 192]{Sc79})  that for all  $i \neq 0$,  the co-rank of  
$\div({\cal K}(i)^\Gamma)$  should be  $r_2$,  hence  $\div({\cal K}(i)^\Gamma) = \div({\mathfrak L}(i)^\Gamma)$  where  ${\cal K}:=F_\infty^{\bullet}\otimes \QpZp \cong H^1(F_\infty, \mu_{p^\infty})$. \\ 
For  $i=-1$ this is Leopoldt's conjecture and for  $i=1$  a consequence of results of Tate on  $K_2$ 
(see \cite [Theorem 2]{Co72}). 
For the convenience of the reader, we reprove  (a)  and  (c)  along our own lines. 
For  (a), 
let  ${\hat F}$  be the maximal abelian  $p$-extension of  $F$  which is unramified outside  $p$-adic primes and  ${\mathfrak X}_F : = \gal({\hat F}/F)$. 
Put  ${\mathfrak X}'_F := ({\mathfrak X}_\infty)_\Gamma = \gal({\hat F}/F_\infty)$. 
Then, the following exact sequence of co-descent 
$$ 0 \to \mathfrak{X}'_F \to \mathfrak{X}_F \to \gal(F_\infty/F) \simeq \Z_p \to 0 $$ 
shows, under Leopoldt's conjecture, that  $\tor_{\Z_p} (\mathfrak{X}'_F) = \tor_{\Z_p} (\mathfrak{X}_F)$  and the exactness of  
$$ 0 \to \fr_{\Z_p} \mathfrak{X}'_F \to \fr_{\Z_p} \mathfrak{X}_F \to  \Z_p \to 0.$$ 
Consequently, our  ${\cal A}'_F$  is the Kummer radical of   $(\fr_{\Z_p} \mathfrak{X}'_F)/p$. 
Besides, the co-descent from  
$$ 0 \to \tor_{\Lambda} \mathfrak{X}_\infty \to \mathfrak{X}_\infty \to \fr_{\Lambda} \mathfrak{X}_\infty \to 0$$  
yields a commutative diagram 
$$ \xymatrix{
0 \ar[r] &  (\tor_{\Lambda} \mathfrak{X}_\infty)_\Gamma  \ar[r]  \ar@{^{(}->}[d]^{\epsilon_1} &  
(\mathfrak{X}_\infty)_\Gamma = \mathfrak{X}'_F  \ar[r] \ar@{=}[d] & 
(\fr_{\Lambda} \mathfrak{X}_\infty)_\Gamma  \ar[r] \ar@{^{ }.>>}[d]^{\epsilon_2}& 0 \\
0 \ar[r] & \tor_{\Z_p} (\mathfrak{X}_F) \ar[r]  &  \mathfrak{X}'_F  \ar[r] & 
\fr_{\Z_p} \mathfrak{X}'_F \ar[r]   & 0 \\
}$$

Leopoldt's conjecture for $F$ is equivalent to the finiteness of   $(\tor_{\Lambda} \mathfrak{X}_\infty)_\Gamma$. 
This ensures that the vertical map  $\epsilon_1$  takes its values in  $\tor_{\Z_p} (\mathfrak{X}_F)$. 
Hence the existence of the map  $\epsilon_2$ and the isomorphism  $\ker \epsilon_2 \simeq \coker \epsilon_1$. 
In particular,  $\ker \epsilon_2$  is finite  and therefore   
$\fr_{\Z_p} ((\fr_{\Lambda} \mathfrak{X}_\infty)_\Gamma) \simeq \fr_{\Z_p}(\mathfrak{X}'_F).$ 

Write the exact sequence of the Remark following Proposition 1.7: 
$$ 0 \to \fr_{\Lambda} {\mathfrak X}_\infty  \to \Lambda^{r_2} \to H \to 0$$ 
where  the finite module  $H$ is isomorphic to  $\hom(X^\circ, \mu_{p^\infty})$. 
By codescent, we obtain
$$ 0 \to H^\Gamma \to (\fr_{\Lambda} {\mathfrak X}_\infty)_\Gamma  \to \Z_p^{r_2}$$ 
which shows that: $H^\Gamma \simeq \tor_{\Z_p} ( (\fr_{\Lambda} {\mathfrak X}_\infty)_\Gamma )$, hence the following exact sequence 
$$ 0 \to H^\Gamma \to (\fr_{\Lambda} {\mathfrak X}_\infty)_\Gamma  \stackrel{\epsilon_2} \to \fr_{\Z_p}(\mathfrak{X}'_F) \to 0.$$ 
The situation is illustrated in the following diagram of fields and Galois groups: 
\begin{displaymath}
\xymatrix{
& & & {\hat F} \ar@/^4pc/@{-}[6,-2]^{\mathfrak{X}'_F := (\mathfrak{X}_\infty)_\Gamma} \ar@{-}[2,-1]\ar@{-}_{ (\tor_{\Lambda} \mathfrak{X}_\infty)_\Gamma }[2,-1]  \\
\\
& & E \ar@{-}_{H^\Gamma}[2,-1]\ar@/^/@{-}^{(\fr_{\Lambda} {\mathfrak X}_\infty)_\Gamma}[4,-1]  \\
\\
&{\tilde F} \ar@{-}_{\fr_{\Z_p} \mathfrak{X}'_F}[2,0]\\
\\
&F_\infty \ar@{-}[2,0]\\
\\
& F  
}
\end{displaymath}
The above short exact sequence gives by Kummer duality 
$$0 \to \hom (\fr_{\Z_p}\mathfrak{X}'_F, \mu_{p^\infty}) \to  {\mathfrak L}(-1)^\Gamma (1) \to (X^\circ(-1)_\Gamma)(1) \to 0$$  
where, as before,  ${\mathfrak L} = \hom (\fr_{\Lambda} \mathfrak{X}_\infty, \mu_{p^\infty}).$  
This shows that the first term is  $\div ({\mathfrak L}(-1)^\Gamma)(1)$  and the snake lemma gives the exact sequence 
$$ 0 \to {\cal A}'_F \to  {\mathfrak L}(-1)^\Gamma [p](1) = {\mathfrak L}^\Gamma [p]  \to  (X^\circ(-1)_\Gamma)[p](1) \to 0$$ 
(a precise version of the result in \cite [page 1242] {Gre78}). 
This shows in particular that under Leopoldt's conjecture  ${\cal A}'_F = \div({\mathfrak L}(-1)^\Gamma)[p](1)$.

 To show  (c), start from the commutative diagram 
$$\xymatrix{
 &   &   & 0  \ar[d] &  \\
 &   &   & \div ({\cal K}(1)^\Gamma)[p]  \ar[d] &  \\
0 \ar[r] & (\mu(F)/p)(1) \ar[r] \ar@{^{(}->}[d] &  F^{\bullet}/F^{\bullet p}(1)  \ar[r]  \ar[d]^= & {\cal K}(1)^\Gamma[p] = {\cal K}^\Gamma[p] (1) \ar[r] \ar[d]^\beta & 0 \\
0 \ar[r] & {\cal T}_F(1) \ar[r]  &  F^{\bullet}/F^{\bullet p}(1) \ar[r]^\alpha  & K_2F[p] \ar[r]  \ar[d]& 0 \\
 &   &   & 0   &  \\
}$$
The upper exact line comes from Kummer theory: just write the cohomology exact sequence  
$$0 \to H^0(F, \mu_{p^\infty})/p \to  H^1(F, \mu_p) \to H^1(F, \mu_{p^\infty})[p] \to 0 $$
and use the nullity of  
$H^1(\Gamma, \mu_{p^\infty})$  (Tate's lemma)  to show that  
$$H^1(F, \mu_{p^\infty}) \cong H^1(F_\infty, \mu_{p^\infty})^\Gamma \cong {\cal K}^\Gamma.$$
The map  $\alpha$  in the lower exact line is Tate's map  $a\otimes \zeta_p  \mapsto \{a,\zeta_p\}$  which defines the Tate kernel  ${\cal T}_F$. The exact column is derived from  Tate's exact sequence 
(see e.g. \cite [Theorem 3]{Co72})
$$0 \to \div ({\cal K}(1)^\Gamma) \to  {\cal K}(1)^\Gamma  \stackrel{\beta} \to K_2F\{p\} \to 0 $$
where  $\beta$  is the map  $a\otimes \zeta  \mapsto \{a,\zeta\}$,  for  $a \in F^{\bullet}_\infty$  and   
$\zeta \in \mu_{p^\infty}$.  
A simple diagram chase now gives (c). 

Greenberg's conjecture does not concern the twist  $i=0$, since  ${\cal K}^\Gamma$  is of infinite co-type. 
Hence the assertion  (b)  requires special treatment:  

\begin{lemma} 
Assuming Gross' conjecture for  $F$, we have an exact sequence 
$$0 \to {\hat U}'_F \otimes \Q_p/\Z_p \to {\mathfrak L}^\Gamma \to (X^\circ)_\Gamma \to 0.$$
In particular, 
${\hat U}'_F \otimes \Q_p/\Z_p = \div({\mathfrak L}^\Gamma)$. 
\end{lemma}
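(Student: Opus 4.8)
The plan is to apply the snake lemma to multiplication by $\gamma-1$ on the exact sequence $0 \to X^\circ \to {\mathfrak {\tilde N}} \to {\mathfrak L} \to 0$ of Proposition 1.7, and then to recognise the terms using Lemma 2.2 and Gross' conjecture for $F$. Since the $\Lambda$-module ${\mathfrak {\tilde N}}$ is cofree of corank $r_2$ — a fact already exploited in the proofs of Proposition 1.7 and Theorem 2.1 — we have ${\mathfrak {\tilde N}}_\Gamma = 0$ and ${\mathfrak {\tilde N}}^\Gamma$ divisible of $\Z_p$-corank $r_2$; so the snake lemma produces a four-term exact sequence
$$0 \to (X^\circ)^\Gamma \longrightarrow {\mathfrak {\tilde N}}^\Gamma \longrightarrow {\mathfrak L}^\Gamma \longrightarrow (X^\circ)_\Gamma \to 0 .$$
By Lemma 2.2, ${\mathfrak {\tilde N}}^\Gamma \simeq {\tilde U}'_F \otimes \QpZp$, the isomorphism being the canonical map ${\tilde U}'_F \otimes \QpZp = {\tilde U}'_0 \otimes \QpZp \to {\mathfrak {\tilde N}}$ into the direct limit ${\mathfrak {\tilde N}} = \varinjlim ({\tilde U}_n \otimes \QpZp) = \varinjlim ({\tilde U}'_n \otimes \QpZp)$ (tensoring with $\QpZp$ annihilates the finite $\Z_p$-torsion, so the apostrophe is immaterial here). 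Thus it remains to compute the image of the leftmost map, i.e. the image of the composite $(X^\circ)^\Gamma \to {\tilde U}'_F \otimes \QpZp$.

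Gross' conjecture for $F$ enters at exactly this point. It says that $X_\infty^\Gamma$ is finite; a finite submodule of $X_\infty$ lies in the maximal finite submodule $X^\circ$, so $X_\infty^\Gamma \subseteq X^\circ$ and hence $X_\infty^\Gamma = (X^\circ)^\Gamma$, while moreover $X_\infty^\Gamma \otimes \QpZp = 0$ and $\tor_{\Z_p}(X_\infty^\Gamma) = X_\infty^\Gamma$. Now recall that the sequence $0 \to X^\circ \to {\mathfrak {\tilde N}} \to {\mathfrak L} \to 0$ was obtained in Proposition 1.7 as the direct limit over $n$ of the sequences deduced from Proposition 1.5,
$$0 \to {\tilde U}'_n \to {\hat U}'_n \to X_\infty^{\Gamma_n} \to 0 ,$$
by tensoring with $\QpZp$; hence the map $X^\circ \to {\mathfrak {\tilde N}}$ is the direct limit of the connecting homomorphisms $\tor_{\Z_p}(X_\infty^{\Gamma_n}) \to {\tilde U}'_n \otimes \QpZp$. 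Inspecting the $n=0$ stage of this directed system, and combining the identity $X_\infty^\Gamma = (X^\circ)^\Gamma$ with the Lemma 2.2 identification, the composite $(X^\circ)^\Gamma \to {\tilde U}'_F \otimes \QpZp$ is precisely the connecting map $\partial_0 \colon X_\infty^\Gamma \to {\tilde U}'_F \otimes \QpZp$ arising from tensoring $0 \to {\tilde U}'_F \to {\hat U}'_F \to X_\infty^\Gamma \to 0$ with $\QpZp$; since $X_\infty^\Gamma \otimes \QpZp = 0$, the cokernel of $\partial_0$ is ${\hat U}'_F \otimes \QpZp$. Substituting into the four-term sequence gives
$$0 \to {\hat U}'_F \otimes \QpZp \longrightarrow {\mathfrak L}^\Gamma \longrightarrow (X^\circ)_\Gamma \to 0 ,$$
as desired.

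For the final statement, observe that ${\hat U}'_F$ is a subgroup of the finitely generated torsion-free $\Z_p$-module ${\bar U}'_F$, hence free of finite rank, so that ${\hat U}'_F \otimes \QpZp$ is divisible; sitting inside ${\mathfrak L}^\Gamma$ with the finite quotient $(X^\circ)_\Gamma$, it must then be the maximal divisible subgroup $\div({\mathfrak L}^\Gamma)$, since $\div({\mathfrak L}^\Gamma)/({\hat U}'_F \otimes \QpZp)$ would otherwise be a nontrivial finite divisible group. The delicate step is the middle one: matching the abstract connecting map coming from the snake lemma on the Proposition 1.7 sequence with the concrete map $\partial_0$ attached to the base field. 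This causes no real trouble once one keeps in mind that the Proposition 1.7 sequence is, by construction, the direct limit of the tensored Proposition 1.5 sequences and that the Lemma 2.2 isomorphism is nothing other than its $n=0$ structure map; but it is precisely here that the hypothesis "Gross for $F$" — and not for all layers — is what is needed, via $X_\infty^\Gamma = (X^\circ)^\Gamma$ together with the vanishing of $X_\infty^\Gamma \otimes \QpZp$.
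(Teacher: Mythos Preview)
Your proof is correct and follows essentially the same route as the paper. Both arguments take $\Gamma$-invariants in the Proposition 1.7 sequence $0 \to X^\circ \to {\mathfrak {\tilde N}} \to {\mathfrak L} \to 0$ (using ${\mathfrak {\tilde N}}_\Gamma = 0$), invoke Lemma 2.2 to identify ${\mathfrak {\tilde N}}^\Gamma$ with ${\tilde U}'_F \otimes \QpZp$, and then use that the map $(X^\circ)^\Gamma \to {\mathfrak {\tilde N}}^\Gamma$ coincides with the connecting map $\partial_0$ coming from the $n=0$ stage of the Proposition 1.5 sequence tensored with $\QpZp$, so that under Gross' conjecture its cokernel is ${\hat U}'_F \otimes \QpZp$. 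The paper packages this last identification as a commutative diagram comparing the two four-term sequences and introduces an intermediate module $M$, whereas you argue it directly from the fact that the Proposition 1.7 sequence is, by construction, the direct limit of the tensored Proposition 1.5 sequences and that the Lemma 2.2 isomorphism is the $n=0$ structure map; these are two phrasings of the same step.
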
 
\begin{proof} 
We have two short exact sequences ( Proposition 1.7 and its proof)  
$$0 \to X^\circ  \to  {\mathfrak {\tilde N}} \to {\mathfrak L} \to 0  
\quad \textrm { and } \quad 
0 \to {\mathfrak L} \to {\mathfrak {\hat N}} \to \Delta_\infty  \to 0.$$
From the first, we have already derived: 
$$0 \to (X^\circ)^\Gamma  \to  {\mathfrak {\tilde N}}^\Gamma \to {\mathfrak L}^\Gamma \to (X^\circ)_\Gamma \to 0.$$
Since  ${\mathfrak L}$  is the Kummer dual of  $\fr_\Lambda (\mathfrak{X}_\infty)$, we have  ${\mathfrak L}_\Gamma = 0$ so that the second sequence implies: 
$$0 \to {\mathfrak L}^\Gamma \to {\mathfrak {\hat N}}^\Gamma \to \Delta_\infty^\Gamma \to 0 .$$
We compare this exact sequence to what we have at the level of  $F$ which is provided by Proposition 1.5 after tensoring with  $\Q_p/\Z_p$: 
$$ 0 \to \tor_{\Z_p}(X_\infty^\Gamma) \to {\tilde U}'_F\otimes \Q_p/\Z_p  \to {\hat U}'_F\otimes \Q_p/\Z_p \to X_\infty^\Gamma\otimes \Q_p/\Z_p \to 0.$$ 
Namely, we break the above exact sequence into two exact sequences to obtain a commutative diagram: 
$$\xymatrix{
0 \ar[r] &  (X^\circ)^\Gamma \ar[r]  &  {\mathfrak {\tilde N}}^\Gamma  \ar[r] & {\mathfrak L}^\Gamma \ar[r] &  
(X^\circ)_\Gamma   \ar[r]  &  0 \\
0 \ar[r] &\tor_{\Z_p}(X_\infty^\Gamma) \ar[r] \ar@{=}[u] &  {\tilde U}'_F\otimes \Q_p/\Z_p \ar[r] \ar[u]^{\wr}& M \ar[r]  \ar[u] & 0 \\
}$$
leading to  
$$ 0 \to M \to {\mathfrak L}^\Gamma \to (X^\circ)_\Gamma \to 0.$$ 
To finish the proof, observe that under the Gross conjecture for  $F$,  the tensor product  
$X_\infty^\Gamma\otimes \Q_p/\Z_p$  vanishes since  $X_\infty^\Gamma$ is finite. 
Hence in fact  $M ={\hat U}'_F\otimes \Q_p/\Z_p$. 
\end{proof}
Since we are specifically dealing with the case (a) of the above Proposition, let us restate it after twisting once "\`a la Tate". 
\begin{cor}
If  $F$  verifies Leopoldt's conjecture at $p$, then we have a short exact sequence   
$$0  \to  {\tilde U}_FF^{\bullet p}/F^{\bullet p} \to {\cal A}_F \to ({X^\circ(-1)}^\Gamma/p)(1)  \to 0. $$
In particular, ${\cal A}_F = {\tilde U}_F F^{\bullet p}/F^{\bullet p}$  precisely when 
$X^\circ = 0$. 
\end{cor}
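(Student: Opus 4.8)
This corollary is essentially the case $i=-1$ of Theorem 2.1 combined with Proposition 2.3(a), transported back to the level of $F^\bullet/F^{\bullet p}$. I would start by taking the four-term exact sequence of Theorem 2.1 with $i=-1$, twisting it once by $(1)$, and using Proposition 2.3(a) (valid under Leopoldt's conjecture for $F$) to rewrite $\div({\mathfrak L}(-1)^\Gamma)[p](1)$ as ${\cal A}'_F$. Since $\mu_p\subset F$ the outer twist only alters the Galois action below $F$, so one obtains the exact sequence
$$0\longrightarrow X^{\circ \Gamma}[p]\longrightarrow {\tilde U}'_F/p\longrightarrow {\cal A}'_F\longrightarrow (X^\circ(-1)^\Gamma/p)(1)\longrightarrow 0,\qquad(\star)$$
whose middle arrow is — by the construction of all the maps in Section 1 and in the proof of Theorem 2.1, through the tautological inclusions ${\tilde U}_n\otimes\QpZp\hookrightarrow{\cal K}$ and the Kummer duality of Proposition 1.7 — the natural map induced by ${\tilde U}_F\subset F^\bullet$ after factoring out $\mu(F)$.

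It then remains to restore the factors $\mu(F)/p$. Here I would use that $\mu(F)$ is exactly the $\Z_p$-torsion of ${\tilde U}_F=({\bar U}_\infty)_\Gamma$ (Proposition 1.1), whence $\mu(F)/p\subset {\tilde U}_F F^{\bullet p}/F^{\bullet p}\subset {\cal A}_F$ (the last inclusion is Lemma 1.6) and the surjections ${\cal A}_F\twoheadrightarrow{\cal A}'_F$ and ${\tilde U}_F/p\twoheadrightarrow {\tilde U}'_F/p$ each have kernel $\mu(F)/p$. The single computation needed is $\ker({\tilde U}_F/p\to F^\bullet/F^{\bullet p})$: this map factors through the injection ${\bar U}_F/p\hookrightarrow F^\bullet/F^{\bullet p}$ (a $(p)$-unit which is a $p$-th power in $F^\bullet$ is the $p$-th power of a $(p)$-unit), so the snake lemma for $0\to{\tilde U}_F\to{\bar U}_F\to V_F\to 0$ and multiplication by $p$ (with $V_F:={\bar U}_F/{\tilde U}_F$) identifies this kernel with $\tor_{\Z_p}(V_F)[p]$, and $\tor_{\Z_p}(V_F)\simeq\tor_{\Z_p}(X_\infty^\Gamma)=X^{\circ \Gamma}$ because $X^\circ$ is the maximal finite submodule of $X_\infty$. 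Thus $\ker({\tilde U}_F/p\to F^\bullet/F^{\bullet p})=X^{\circ \Gamma}[p]$, so ${\tilde U}_F F^{\bullet p}/F^{\bullet p}=({\tilde U}_F/p)/X^{\circ \Gamma}[p]$, and killing $\mu(F)/p$ recovers exactly the image of ${\tilde U}'_F/p$ in ${\cal A}'_F$ appearing in $(\star)$.

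A short diagram chase — two applications of the snake lemma to the $2\times2$ squares comparing the primed and unprimed data, all vertical arrows being ``quotient by $\mu(F)/p$'', hence with kernel $\mu(F)/p$ and zero cokernel — now shows that the cokernel of ${\tilde U}_F F^{\bullet p}/F^{\bullet p}\hookrightarrow{\cal A}_F$ equals that of its $\mu(F)/p$-quotient inside ${\cal A}'_F$, namely $(X^\circ(-1)^\Gamma/p)(1)$; this is the asserted short exact sequence. For the final assertion, if $X^\circ=0$ the cokernel vanishes, while conversely ${\cal A}_F={\tilde U}_F F^{\bullet p}/F^{\bullet p}$ forces $X^\circ(-1)^\Gamma/p=0$, hence the finite $p$-group $X^\circ(-1)^\Gamma$ vanishes, hence $X^\circ(-1)=0$ (a nonzero finite $\Gamma$-module has nonzero invariants, by orbit-counting modulo $p$ with $\Gamma$ acting through a finite $p$-group), i.e. $X^\circ=0$. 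The real content is not any single step but the $\mu(F)/p$-bookkeeping together with the compatibility of the abstractly built map in $(\star)$ with the natural one; once that is granted — it is built into the use of the submodules ${\mathfrak {\tilde N}}\subset{\cal K}$ — the argument is formal.
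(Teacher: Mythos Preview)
Your proof is correct and follows the same route as the paper: apply Theorem~2.1 with $i=-1$, invoke Proposition~2.3(a) under Leopoldt to identify $\div({\mathfrak L}(-1)^\Gamma)[p](1)$ with ${\cal A}'_F$, and then pass from the primed to the unprimed objects by adding back $\mu(F)/p$. You simply fill in steps the paper leaves implicit: the paper jumps directly from the four-term sequence of Theorem~2.1 to the three-term sequence with ${\tilde U}'_F F^{\bullet p}/F^{\bullet p}$ on the left, tacitly identifying the image of ${\tilde U}'_F/p\to{\cal A}'_F$ with that radical, and it handles the $\mu(F)/p$-bookkeeping in a single sentence. Your explicit computation of $\ker({\tilde U}_F/p\to F^\bullet/F^{\bullet p})\simeq\tor_{\Z_p}(V_F)[p]\simeq X^{\circ\Gamma}[p]$ makes that identification transparent, and your argument for the ``precisely when'' direction (nonzero finite $\Gamma$-module over a pro-$p$ group has nonzero invariants) is a detail the paper omits entirely.
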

\begin{proof} 
The case  $i=-1$  of Theorem 2.1  yields: 
$$0  \to  {\tilde U}'_F F^{\bullet p}/F^{\bullet p} \to 
{\cal A}'_F \to ({X^\circ(-1)}^\Gamma/p)(1) = ({X_\infty(-1)}^\Gamma/p)(1) \to 0$$
(The last equality comes from Leopoldt's conjecture, which is known to be equivalent to the finiteness of  ${X_\infty(-1)}^\Gamma$). 
Since  ${\tilde U}'_F F^{\bullet p}/F^{\bullet p}$  and   ${\cal A}'_F$  are obtained by taking the quotients of  
${\tilde U}_F F^{\bullet p}/F^{\bullet p}$  and   ${\cal A}_F$ by the same submodule  $\mu(F)/p$, the exact sequence of the corollary follows. 
\end{proof} 

\begin{ex}
Consider a number field  $F$ which satisfies Leopoldt's conjecture together with the following two properties: \\
(i) $X^\circ =(0)$  and  (ii) $F$ contains only one  $p$-adic prime \\
(for a cyclotomic field  $F:=\Q(\mu_{p^r})$, the property (i) is equivalent to Vandiver's conjecture and (ii) is of course automatically satisfied). 
Then  ${\cal A}_F = {\hat U}_F F^{\bullet p}/F^{\bullet p}$ according to Proposition 1.5 and Corollary 2.5, and  ${\overline U}_F ={\hat U}_F$  according to (ii).  
It follows that  ${\cal A}_F = {\overline U}_F/p$.  
\end{ex}

\textbf{Remarks}:
(i) Lemma 2.4 was shown in \cite {Hu05}  using a different approach. \\
(ii) Within the context of assertion (b), the exact sequence of Theorem 2.1 coincides with the one obtained by applying the snake lemma to multiplication-by-$p$ in  Kuz'min's exact sequence  (Proposition 1.5). \\
(iii) For  $i \neq 0$, the exact sequence of Theorem 2.1 comes also by applying the snake lemma to multiplication-by-$p$ in some descent exact sequences in Galois cohomology \cite [Theorem 3.2 bis]{KNF96}. \\
(iv) In Corollary 2.5,  ${\tilde U}'_F F^{\bullet p}/F^{\bullet p}$  is the Kummer radical of the first layers of  $\Z_p$-extensions of a particular type. One can show from \cite [Theorem 2.4]{FN91} that they are the  $\Z_p$-extensions  $K_\infty = \cup_{n \geq 0} K_n$ of  $F$, such that the rings of $p$-integers of all the  $K_n$'s have normal bases which are coherent for the trace. \\ 
(v) For  $i \neq 0$, the  $G_F$-modules  $\div({\mathfrak L}(i)^\Gamma)[p]$  are the generalized Tate kernels studied in 
\cite { 00, AM04, Hu05, Va08} in  connection with problems of capitulation. 
When  $F \supset \mu_{p^{e+1}}$, where  $p^e$  is the exponent of  $X^\circ$, then all the modules  
$\div({\mathfrak L}(i)^\Gamma)[p]$ are equal \cite {Gre78, AM04} (see also Corollary 3.2 below). 
For a general comparison between these kernels when  $i$  varies see \cite [Theorem 2.7]{Va08}.

Under Leopoldt's conjecture, the above Corollary 2.5  provides a good approximation of  ${\cal A}_F$  by  
${\tilde U}_F F^{\bullet p}/F^{\bullet p}$,  whose  
dimension over  $\F_p$ is  $r_2 - h$, where  $h:=dim {X^\circ}^\Gamma[p] = dim {X^\circ}(-1)^\Gamma[p]= dim (X^\circ(-1)^\Gamma/p).$ 
The difference  $h$  is of an asymptotic nature and bounded in the cyclotomic tower. 
Interpreting  $X^\circ$  as a capitulation kernel \cite {Iw73, Gre78}, this parameter is theoretically and effectively accessible. 

Nevertheless, the result is not entirely satisfactory as the $p$-units of  ${\tilde U}_F/p$  are not immediately   accessible. 
Kuz'min's exact sequence (Proposition 1.5)  suggests replacing  ${\tilde U}_F$  by  ${\hat U}_F$,  which is easily accessible by Lemma 1.4. 

\subsection{A lower bound in terms of local universal norms} 
We want to "approximate"   ${\cal A}_F$  by the intersection  ${\cal A}_F \cap {\hat U}_F/p$. 
In order to compute the deviation, let us come back to the exact sequence of Theorem 2.1,  where the map  
$\sigma_i : \div({\mathfrak L}(i)^\Gamma)[p] \twoheadrightarrow X^\circ(i)^\Gamma/p$  
is given by the snake lemma and hence depends on the twist  $i$. 
To compare the images of the  $\sigma_i$'s, we must put them in a space which does not (at least for the action of Galois groups over  $F$)  depend on  $i$. From the exact sequence 
$$0 \to X^\circ(i)^\Gamma  \to  {\mathfrak {\tilde N}}(i)^\Gamma \to {\mathfrak L}(i)^\Gamma $$  
(see Proposition 1.7), we derive 
$$0 \to (X^\circ)^\Gamma[p](i)  \to  {\mathfrak {\tilde N}}^\Gamma[p](i) \to {\mathfrak L}^\Gamma[p](i).$$  
Let  $W$  be the cokernel of the map on the right, 
so that  $W$  does not depend on  $i$  (for the Galois action over  $F$)  and we have a commutative diagram:

$$\xymatrix{
0 \ar[r] & (X^\circ)^\Gamma[p](i) \ar[r]  & {\mathfrak {\tilde N}}^\Gamma[p](i) \ar[r] & {\mathfrak L}^\Gamma[p](i) \ar[r] &  W  \ar[r]  &  0 \\
0 \ar[r] & (X^\circ)^\Gamma[p](i) \ar[r] \ar@{=}[u] &  {\mathfrak {\tilde N}}^\Gamma[p](i) \ar[r] \ar@{=}[u] & \div({\mathfrak L}(i)^\Gamma)[p]  \ar[r]^{\sigma_i} \ar@{^{(}->}[u] &   X^\circ(i)^\Gamma/p  \ar[r] \ar[u]_{\tau_i} & 0 \\
}$$
where the right vertical map  $\tau_i$  is defined tautologically and is injective. 
Finally, let  $T_i := \tau_i (X^\circ(i)^\Gamma/p) (-i)$, then with these notations we have 
\begin{prop}
Suppose that Leopoldt's and Gross' conjecture are valid for $F$.  
Then we have the following exact sequence: 
$$0 \to {\cal A}_F \cap {\hat U}_F/p  \to {\cal A}_F \to 
T_{-1}/ T_0 \cap T_{-1} \to 0. $$
\end{prop}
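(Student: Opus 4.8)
The plan is to compare, for the two twists $i = 0$ and $i = -1$, the exact sequences of Theorem 2.1 inside the common ambient module $W$, and then to identify each piece with its arithmetic counterpart via Proposition 2.3. First I would record, for $i = -1$ (Leopoldt) and $i = 0$ (Gross), the two bottom rows of the displayed commutative diagram preceding the statement, both of which embed into the single top row
$$0 \to (X^\circ)^\Gamma[p](i) \to {\mathfrak{\tilde N}}^\Gamma[p](i) \to {\mathfrak L}^\Gamma[p](i) \to W \to 0.$$
Untwisting (by $(1)$ for $i=-1$ and leaving $i=0$ alone), the middle term ${\mathfrak{\tilde N}}^\Gamma[p]$ is, by Lemma 2.2, exactly ${\tilde U}'_F/p$, while the submodule $\div({\mathfrak L}(i)^\Gamma)[p]$ is identified by Proposition 2.3: it is ${\cal A}'_F(1)$ for $i=-1$ (part (a)) and ${\hat U}'_F/p$ for $i=0$ (part (b)). Thus, after adjoining back the common submodule $\mu(F)/p$, the two bottom rows become the known inclusions ${\tilde U}_F F^{\bullet p}/F^{\bullet p} \subset {\cal A}_F$ and ${\tilde U}_F F^{\bullet p}/F^{\bullet p} \subset {\hat U}_F/p$ (the latter is just Proposition 1.5 tensored with $\Q_p/\Z_p$ and truncated by $p$), each sitting as the preimage in $W$ of $T_{-1}$, respectively $T_0$, under the common quotient map ${\mathfrak L}^\Gamma[p] \twoheadrightarrow W$.

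Next I would carry out the diagram chase. Let $\pi \colon {\mathfrak L}^\Gamma[p] \to W$ be the surjection whose kernel is the image of ${\tilde U}'_F/p$ (independently of $i$, since $W$ does not depend on $i$ for the Galois action over $F$). Then $\div({\mathfrak L}(-1)^\Gamma)[p](1) = {\cal A}'_F = \pi^{-1}(T_{-1})$ and $\div({\mathfrak L}^\Gamma)[p] = {\hat U}'_F/p = \pi^{-1}(T_0)$ inside ${\mathfrak L}^\Gamma[p]$, using that the twist is cosmetic above $F$. Consequently
$${\cal A}'_F \cap {\hat U}'_F/p = \pi^{-1}(T_{-1}) \cap \pi^{-1}(T_0) = \pi^{-1}(T_{-1} \cap T_0),$$
and the snake lemma (or simply the second isomorphism theorem applied to $\pi$) gives
$$0 \to {\cal A}'_F \cap {\hat U}'_F/p \to {\cal A}'_F \to T_{-1}/(T_{-1}\cap T_0) \to 0.$$
Finally I would pass from the primed to the unprimed radicals: since ${\cal A}'_F = {\cal A}_F/(\mu(F)/p)$, ${\hat U}'_F/p = ({\hat U}_F/p)/(\mu(F)/p)$, and $\mu(F)/p$ sits inside both ${\cal A}_F$ and ${\hat U}_F/p$, the intersection ${\cal A}_F \cap {\hat U}_F/p$ maps onto ${\cal A}'_F \cap {\hat U}'_F/p$ with kernel $\mu(F)/p$, so quotienting the displayed sequence is harmless and yields exactly the asserted sequence with the unprimed $\mathcal{A}_F$.

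The main obstacle I anticipate is bookkeeping rather than conceptual: one must verify that the embedding ${\hat U}'_F/p \hookrightarrow {\mathfrak L}^\Gamma[p]$ furnished by Lemma 2.4 (valid under Gross' conjecture, which is assumed) is compatible with the embedding $\div({\mathfrak L}(i)^\Gamma)[p] \hookrightarrow {\mathfrak L}^\Gamma[p]$ used to define $\tau_i$, i.e. that $\pi^{-1}(T_0)$ really is ${\hat U}'_F/p$ and not some other lift; this hinges on Remark (ii) after Corollary 2.5, which says that for $i=0$ the Theorem 2.1 sequence is precisely the snake-lemma sequence for multiplication by $p$ on Kuz'min's sequence (Proposition 1.5). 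Granting that compatibility, the identification $\ker\pi = \mathrm{im}({\tilde U}'_F/p)$ on both rows makes the two preimages literally comparable in $W$, and the rest is the elementary preimage-intersection identity above. A secondary point of care is the direction of the Tate twist attached to $W$: since above $F$ all twists are trivial, $T_{-1}$ and $T_0$ live in the same $\mathbb{F}_p$-vector space $W(-i)$, and one should simply fix the untwisted normalization once and for all so that $T_{-1}/(T_{-1}\cap T_0)$ makes sense as written.
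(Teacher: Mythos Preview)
Your proposal is correct and takes essentially the same approach as the paper: both identify ${\cal A}'_F$ and ${\hat U}'_F/p$ as the preimages of $T_{-1}$ and $T_0$ under the common quotient ${\mathfrak L}^\Gamma[p] \twoheadrightarrow W$ (with kernel the image of ${\tilde U}'_F/p$), and then deduce the exact sequence from the elementary preimage-intersection identity. The paper merely dresses this up in Kummer-theoretic language, translating the preimage picture into a lattice of field extensions between $F(\sqrt[p]{{\tilde U}_F})$ and $F(\sqrt[p]{{\cal A}_F {\hat U}_F})$; your algebraic formulation via $\pi$ is the same argument, and your explicit discussion of the compatibility issue (that $\pi^{-1}(T_0)$ really is ${\hat U}'_F/p$) is a point the paper leaves implicit.
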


\begin{proof}
Write  $D_i:=\div {\mathfrak L}(i)^\Gamma[p]$  for short. 
Then  $D_i(-i)$,  for  $i=0, -1$,  can be identified with the Kummer radicals in  $F^\bullet/F^{\bullet p}$  of the Kummer extensions  $F(\sqrt[p]{{\hat U}_F})/F$  and  $F(\sqrt[p]{{\cal A}_F})/F$  respectively (obvious notations).  
According to Theorem 2.1,  $\tau_{i}(X^\circ(i)^\Gamma/p)$  for $i=0, -1$  can be identified with the Kummer radicals  in  $F(\sqrt[p]{{\tilde U}_F})$  of  $F(\sqrt[p]{{\hat U}_F})$  and $F(\sqrt[p]{{\cal A}_F})$  respectively (see the diagram). Hence the statement of the proposition by elementary Kummer theory. 
\vspace{2mm}
$$\xymatrix{
F(\sqrt[p]{{\cal A}_F}) \ar@{{}-{}}[r] & F(\sqrt[p]{{\cal A}_F {\hat U}_F}) \\
F(\sqrt[p]{{\hat U}_F}) \cap F(\sqrt[p]{{\cal A}_F}) \ar@{{}-{}}[r] \ar@{{}-{}}[u] & F(\sqrt[p]{{\hat U}_F}) \ar@{{}-{}}[u]  \\
F(\sqrt[p]{{\tilde U}_F})  \ar@{{}-{}}[u] \ar@{{}-{}}[u]&  \\
F  \ar@{{}-{}}[u] &   \\
}$$   
\end{proof}

{\bf  Remark}. 
An analogous result holds when replacing the pair  (${\cal A}_F,  {\hat U}_F/p$)  by any pair taken from  \{${\cal A}_F,  {\hat U}_F/p, {\cal T}_F$\}  or by any pair  ($D_i, D_j$), $i \neq j$. 
This should be compared with \cite [Theorem 2.7]{Va08} which states (in our notations) that  
$D_i /D_i \cap D_j \simeq p^t \Delta_{i,j}$, where  $t$ is the maximal integer such that  
$i \equiv j \mod [F(\mu_{p^t}) : F]$  and  $\Delta_{i,j} \subset  H^2(G_S(F), \Zp(j))$  
is the image, by corestriction, of  
$H^2_{Iw}(G_S(F_\infty), \Zp(i))^\Gamma(j-i) \subset H^2_{Iw}(G_S(F_\infty), \Zp(j))$.  
Here  $H^2_{Iw}(G_S(F_\infty), \bullet)$  denotes, as usual,  $\varprojlim H^2(G_S(F_n), \bullet)$  with respect to corestriction.

We already noticed (Remark (v) following Corollary 2.5) that  ${\cal A}_F$  and  ${\hat U}_F/p$  coincide when  $F$ contains enough  $p$-primary roots of unity (see also Corollary 3.2 below). Proposition 2.7  shows that this is not the case in general, but their deviation, which is of an asymptotical nature, goes to zero when we go up the cyclotomic tower.

\section{Upper bounds for the Kummer radical  ${\cal A}_F$} 
We are going to give two  "upper bounds"  for  ${\cal A}_F$. 
The first will be a "norm" radical which is accessible in the sense of the introduction. 
The second, via a local-global approach,  will be a radical characterized by everywhere local embeddability in a   $\Z_p$-extension. 

\subsection{Bounding from above by a norm radical} 
We keep the notations of the preceding sections. 

\begin{thm}
Suppose that  $F$  contains  $\mu_p$  and the layers  $F_n$'s verify Gross' conjecture  (i.e.  $\Delta_\infty =0$).  
Take  $m$  large enough for   $\Gamma_m$  to act  trivially on  $X^\circ$  and  put  $n=m+1$. 
Then, for every  $i \in \Z$, we have an exact sequence: 
$$0 \to \div ({\hat {\mathfrak N}}(i)^\Gamma) [p] \to ({\hat U}'_n/p)^{G_n}(i) \to X^\circ(i)_\Gamma [p] \to 0 .$$ 
In particular, if  $F$  also satisfies Leopoldt's conjecture, we have: 
 $$0 \to {\cal A}'_F \to ({\hat U}'_n/p)^{G_n}  \to (X^\circ(-1))_\Gamma [p](1) \to 0$$ 
where  $G_n =\gal(F_n/F)$. 
\end{thm}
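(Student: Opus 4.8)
The plan is to mimic the infinite-level analysis of Proposition 1.7 and Theorem 2.1 but now at the level of the fixed layer $F_n$, using the stabilization of $X_\infty^{\Gamma_m}$ and the hypothesis $\Delta_\infty=0$. First I would start from the exact sequence of Proposition 1.7, which under the Gross hypothesis for all $F_n$ reads ${\mathfrak L}={\mathfrak {\hat N}}$ and $0 \to X^\circ \to {\mathfrak {\tilde N}} \to {\mathfrak {\hat N}} \to 0$. Taking $i$-fold Tate twists and then $\Gamma$-cohomology gives the four-term sequence
$$0 \to X^\circ(i)^\Gamma \to {\mathfrak {\tilde N}}(i)^\Gamma \to {\mathfrak {\hat N}}(i)^\Gamma \to X^\circ(i)_\Gamma \to 0,$$
using that ${\mathfrak {\tilde N}}$ is $\Lambda$-cofree so its $\Gamma$-coinvariants vanish (exactly as in the proof of Theorem 2.1). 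Splitting off the divisible part, ${\mathfrak {\hat N}}(i)^\Gamma$ sits in $0 \to \div({\mathfrak {\hat N}}(i)^\Gamma) \to {\mathfrak {\hat N}}(i)^\Gamma \to X^\circ(i)_\Gamma \to 0$ with $X^\circ(i)_\Gamma$ finite (Gross for $F$), and applying the snake lemma to multiplication by $p$ produces
$$0 \to \div({\mathfrak {\hat N}}(i)^\Gamma)[p] \to {\mathfrak {\hat N}}(i)^\Gamma[p] \to X^\circ(i)_\Gamma[p] \to 0,$$
provided $\div({\mathfrak {\hat N}}(i)^\Gamma)$ is $p$-divisible onto itself and $X^\circ(i)_\Gamma[p]$ surjects correctly — which holds since the quotient is finite and the divisible subgroup absorbs the first syzygy. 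So the whole game reduces to identifying ${\mathfrak {\hat N}}(i)^\Gamma[p]$, or rather ${\mathfrak {\hat N}}^\Gamma[p]$ (the twist outside is cosmetic over $F$), with $({\hat U}'_n/p)^{G_n}$.

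The key identification is the analogue of Lemma 2.2: I would show ${\mathfrak {\hat N}}^{\Gamma_n} \simeq {\hat U}'_n \otimes \Q_p/\Z_p$ for $n \gg 0$, hence after taking $G_n=\Gamma/\Gamma_n$-invariants, ${\mathfrak {\hat N}}^\Gamma = ({\mathfrak {\hat N}}^{\Gamma_n})^{G_n} \simeq ({\hat U}'_n \otimes \Q_p/\Z_p)^{G_n}$, and then relate the $p$-torsion of the latter to $({\hat U}'_n/p)^{G_n}$. For the first step, note ${\mathfrak {\hat N}}=\varinjlim ({\hat U}'_n \otimes \Q_p/\Z_p)$ with the transition maps ${\hat U}'_n \hookrightarrow {\hat U}'_{n+1}$ injective (Lemma 1.4, as already used in Lemma 2.2), and since each ${\hat U}'_n$ is a finitely generated $\Z_p$-module whose rank stabilizes once Gross holds ($r_1+r_2$, as $\delta_n=0$), the cokernels ${\hat U}'_{n+1}/{\hat U}'_n$ are eventually torsion; one needs $m$ large enough that these cokernels are $p$-divisible-free, equivalently that $X^\circ(i)_\Gamma$ has stabilized, which is exactly why the hypothesis asks $\Gamma_m$ to act trivially on $X^\circ$ and sets $n=m+1$ (the extra layer is needed so that $X^\circ_{\Gamma_m}$ versus $X^\circ_{\Gamma_n}$ behave correctly under the $p$-torsion snake, just as $n=m+1$ is forced in the companion Theorem 3.1 cited as such). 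For the passage from $({\hat U}'_n \otimes \Q_p/\Z_p)[p]$ to ${\hat U}'_n/p$: since ${\hat U}'_n$ is $\Z_p$-torsion-free (the apostrophe kills $\mu(F_n)$), ${\hat U}'_n \otimes \Q_p/\Z_p[p] \simeq {\hat U}'_n/p$ canonically, and taking $G_n$-invariants commutes appropriately because $G_n$ is finite and we are chasing inside a finite-dimensional $\F_p$-space. The twist by $(i)$ is then reinstated, giving the first displayed sequence.

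For the second displayed sequence I would specialize to $i=-1$: by Proposition 2.3(a) under Leopoldt, $\div({\mathfrak L}(-1)^\Gamma)[p](1) = {\cal A}'_F$, and since ${\mathfrak L}={\mathfrak {\hat N}}$ under the Gross hypothesis for all layers, $\div({\hat {\mathfrak N}}(-1)^\Gamma)[p](1) = {\cal A}'_F$. Plugging $i=-1$ into the first sequence and twisting by $(1)$ yields $0 \to {\cal A}'_F \to ({\hat U}'_n/p)^{G_n} \to (X^\circ(-1))_\Gamma[p](1) \to 0$ — note the twist inside $X^\circ(-1)_\Gamma$ does matter for the $G_F$-action even though the one on ${\hat U}'_n/p$ over $F$ is cosmetic, exactly the phenomenon flagged after Theorem 2.1. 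The main obstacle I anticipate is the bookkeeping in the first step: pinning down precisely why $n=m+1$ suffices for ${\mathfrak {\hat N}}^\Gamma[p]$ to equal $({\hat U}'_n/p)^{G_n}$ rather than just mapping into it — i.e. controlling both the injectivity of ${\hat U}'_F/p \to {\hat U}'_n/p$ (coming from ${\hat U}'_F$ being a direct summand of ${\hat U}'_n$, as in Lemma 2.2) and the surjectivity onto the $G_n$-invariants, where the "$+1$" in the layer index does the work by ensuring that the relevant $\Gamma$-cohomology of $X^\circ$ has gone rigid. Everything else is diagram-chasing of the kind already carried out for Proposition 1.7 and Theorem 2.1.
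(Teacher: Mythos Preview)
Your overall strategy matches the paper's: you correctly derive
\[
0 \to \div({\hat {\mathfrak N}}(i)^\Gamma)[p] \to {\hat {\mathfrak N}}(i)^\Gamma[p] \to X^\circ(i)_\Gamma[p] \to 0
\]
and correctly reduce everything to identifying ${\hat {\mathfrak N}}^\Gamma[p]$ with $({\hat U}'_n/p)^{G_n}$. The specialization to $i=-1$ via Proposition 2.3(a) is also right.

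The gap is in your ``key identification''. You assert that ${\mathfrak {\hat N}}^{\Gamma_n} \simeq {\hat U}'_n \otimes \Q_p/\Z_p$ for $n \gg 0$, and try to justify this by arguing that the cokernels ${\hat U}'_{n+1}/{\hat U}'_n$ eventually become harmless. This is false. Lemma 2.4, applied with $F_n$ in place of $F$, gives
\[
0 \to {\hat U}'_n \otimes \Q_p/\Z_p \to {\mathfrak {\hat N}}^{\Gamma_n} \to (X^\circ)_{\Gamma_n} \to 0,
\]
and once $\Gamma_m$ acts trivially on $X^\circ$ the right-hand term stabilizes to $X^\circ$ itself, not to zero. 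So the obstruction does not disappear by going up the tower; your direct-limit argument about ranks and torsion cokernels cannot repair this.

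What the paper actually does is keep that cokernel. Taking $[p]$ gives $0 \to {\hat U}'_n/p \to {\mathfrak {\hat N}}^{\Gamma_n}[p] \to (X^\circ)_{\Gamma_n}[p] \to 0$; taking $G_n$-invariants and comparing with the level-$F$ sequence yields a commutative diagram in which the middle vertical map is the identity on ${\mathfrak {\hat N}}^\Gamma[p]$ and the right vertical map $(X^\circ)_\Gamma[p] \to ((X^\circ)_{\Gamma_n}[p])^{G_n}$ is multiplication by $\omega_n/\omega_0$. The whole point of choosing $n=m+1$ is that $\omega_n/\omega_0 = (\omega_n/\omega_m)(\omega_m/\omega_0)$, and since $\Gamma_m$ acts trivially on $X^\circ$ the factor $\omega_n/\omega_m$ acts as multiplication by $p$ on $X^\circ$, hence annihilates $(X^\circ)_{\Gamma_m}[p]=X^\circ[p]$. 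This forces the top-row map ${\mathfrak {\hat N}}^\Gamma[p] \to ((X^\circ)_{\Gamma_n}[p])^{G_n}$ to be zero, whence $({\hat U}'_n/p)^{G_n} \simeq {\mathfrak {\hat N}}^\Gamma[p]$. Your remark that ``the `$+1$' \ldots\ does the work by ensuring that the relevant $\Gamma$-cohomology of $X^\circ$ has gone rigid'' points in the right direction but misses this concrete mechanism entirely.
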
 

\begin{proof} 
Let us start from the exact sequence which appeared at the beginning of the proof of Theorem 2.1 
(notice that  by the hypothesis $\Delta_\infty =0$, we have  ${\mathfrak L} = {\hat {\mathfrak N}}$):  
\begin{equation}
\xymatrix{
0 \ar[r] & X^\circ(i)^\Gamma   \ar[r] &  {\mathfrak {\tilde N}}(i)^\Gamma   \ar[rr] \ar@{^{}->>}[rd] & & {\mathfrak {\hat N}}(i)^\Gamma  \ar[r] & 
X^\circ(i)_\Gamma  \ar[r] & 0 \\
  &   &   &  \div ({\mathfrak {\hat N}}(i)^\Gamma) \ar@{^{(}->}[ru] &  & &\\
}
\end{equation}
from which we derive by the snake lemma applied to the  $p$-th power map: 
$$0 \to \div ({\hat {\mathfrak N}}(i)^\Gamma) [p] \to {\hat {\mathfrak N}}(i)^\Gamma [p] \to X^\circ(i)_\Gamma [p] \to 0 .$$ 
It remains to give an adequate expression of  ${\hat {\mathfrak N}}(i)^\Gamma [p]$. 
But  
${\hat {\mathfrak N}}(i)^\Gamma [p] = {\hat {\mathfrak N}}^\Gamma [p](i)$  since  $F$  contains  $\mu_p$. 
Now, by Lemma 2.4, we have an exact sequence 
$$ 0 \to {\hat U}'_n\otimes \Q_p/\Z_p  \to {\mathfrak {\hat N}}^{\Gamma_n}  \to (X^\circ)_{\Gamma_n} \to 0,$$
for all $n$. Hence also 
$$ 0 \to {\hat U}'_n/p  \to {\mathfrak {\hat N}}^{\Gamma_n}[p]  \to (X^\circ)_{\Gamma_n}[p] \to 0.$$
Take the fixed points by  $G_n :=\gal (F_n/F)$ and compare with the level of  $F$.  
By Lemma 1.4, the natural map  $ {\hat U}'_F/p \to ({\hat U}'_n/p)^{G_n}$  is injective, hence we have a commutative diagram: 
$$\xymatrix{
0 \ar[r] & ({\hat U}'_n/p)^{G_n}  \ar[r]  &  {\mathfrak {\hat N}}^{\Gamma}[p] \ar[r] &
(X^\circ)_{\Gamma_n}[p]^{G_n}  &  \\
0 \ar[r] & ({\hat U}'_F/p) \ar[r] \ar@{^{(}->}[u] &  {\mathfrak {\hat N}}^{\Gamma}[p]  \ar[r] \ar@{=}[u] & 
(X^\circ)_{\Gamma}[p] \ar[r]  \ar[u] & 0. \\
}
$$
Let  $\omega_n :=\gamma^{p^n}-1$. The vertical map on the right (which corresponds to restriction) is multiplication by  $ \omega_n/\omega_0$. 
Choose  $m$  such that  $\Gamma_m$  acts trivially on  $X^\circ$,  and take  $n=m+1$, so that  
$\frac{\omega_n}{\omega_m}$  annihilates $(X^\circ)_{\Gamma_m}[p] = X^\circ[p]$. 
Then  $\frac{\omega_n}{\omega_0} = \frac{\omega_n}{\omega_m}$$\frac{\omega_m}{\omega_0}$  
is the zero map on  $(X^\circ)_{\Gamma}[p]$ and  we also get an isomorphism 
$({\hat U}'_n/p)^{G_n} \stackrel{\sim} \to {\mathfrak {\hat N}}^{\Gamma}[p]$. 
\end{proof}

The above Theorem gives a satisfactory description of the radical  ${\cal A}'_F$  in the terms set out in the introduction. The module  $({\hat U}'_n/p)^{G_n}$  is effectively accessible and the asymptotical deviation  $(X^\circ(-1))_{\Gamma}[p] (1)$  is bounded in the cyclotomic tower. 

For completeness, let us now reprove along the same lines a slightly improved version of a result of Greenberg, generalized by Vauclair: 

\begin{cor} (\cite [page 1242]{Gre78}, \cite [Theorem 2.2 ]{Va06}) 
Let   $p^e$  be the exponent of  $X^\circ$.  
Assume that  $\mu(F) = \mu_{p^a}$  with  $a \geq e$, 
and that the conditions of Theorem 3.1 hold. 
Then ${\cal A}_F = {\hat U}_F/p$  if  $a > e$.  
If  $a=e$  and  $\Gamma$  acts trivially on $X^\circ$,  then  ${\cal A}_F \neq {\hat U}_F/p$ 
\end{cor}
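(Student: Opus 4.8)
The plan is to deduce the corollary directly from the exact sequence of Theorem 3.1, specializing the twist to $i=-1$ and tracking how the divisibility of the relevant modules changes according to the number of $p$-primary roots of unity in $F$. First I would recall the exact sequence
$$0 \to {\cal A}'_F \to ({\hat U}'_n/p)^{G_n}  \to (X^\circ(-1))_\Gamma [p](1) \to 0,$$
together with the identification $({\hat U}'_n/p)^{G_n} \simeq {\mathfrak {\hat N}}^\Gamma[p]$ obtained in the proof of Theorem 3.1, and the sequence $0 \to {\hat U}'_F \otimes \QpZp \to {\mathfrak L}^\Gamma = {\mathfrak {\hat N}}^\Gamma \to (X^\circ)_\Gamma \to 0$ of Lemma 2.4 (valid since Gross holds, so ${\mathfrak L} = {\mathfrak {\hat N}}$). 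The point is that ${\cal A}'_F = {\hat U}'_F/p$ if and only if the natural inclusion ${\hat U}'_F/p \hookrightarrow ({\hat U}'_n/p)^{G_n}$ is onto, equivalently (comparing coranks and torsion) if and only if the connecting map ${\mathfrak {\hat N}}^\Gamma[p] \to (X^\circ(-1))_\Gamma[p](1)$ in Theorem 3.1 coincides, up to the harmless twist, with the map ${\mathfrak {\hat N}}^\Gamma[p] \to (X^\circ)_\Gamma[p]$ coming from Lemma 2.4 applied at level $F$. Both maps are the restrictions to $p$-torsion of the boundary maps in the $\Gamma$-cohomology of the same two-step filtration $X^\circ(i) \hookrightarrow {\mathfrak {\tilde N}}(i) \twoheadrightarrow {\mathfrak L}(i)$, differing only by the twist $i=-1$ versus $i=0$.

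Next I would isolate the role of $\mu(F) = \mu_{p^a}$. The key observation is that the twisting automorphism $\QpZp(-1) \cong \QpZp$ as abstract groups is compatible with the $\Gamma$-action, but the identification of $X^\circ(-1)^\Gamma$ with $X^\circ{}^\Gamma$ (and likewise for coinvariants) involves the cyclotomic character $\chi$, whose restriction to $\Gamma$ is trivial modulo $p^a$ since $\mu_{p^a} \subset F$. Therefore, on any subquotient of $X^\circ$ annihilated by $p^a$ — in particular on $X^\circ[p]$, which is annihilated by $p \le p^a$ — the Galois action on the $(-1)$-twist agrees with the untwisted action, so $(X^\circ(-1))_\Gamma[p](1) = (X^\circ)_\Gamma[p]$ canonically, and the two boundary maps literally coincide. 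This forces ${\cal A}'_F = {\hat U}'_F/p$, and hence ${\cal A}_F = {\hat U}_F/p$ after adjoining back the common submodule $\mu(F)/p$, whenever $a \ge 1$ — but I must be more careful, since the statement claims this only for $a > e$. The refinement is that one does not merely need the two boundary maps to agree as maps out of ${\mathfrak {\hat N}}^\Gamma[p]$; one needs the map ${\hat U}'_F/p \to ({\hat U}'_n/p)^{G_n}$ itself to be surjective, which, via the restriction map $(X^\circ)_\Gamma[p] \to (X^\circ)_{\Gamma_n}[p]^{G_n}$ being zero (established in the proof of Theorem 3.1 by the choice $n = m+1$), reduces to showing that the cokernel $(X^\circ(-1))_\Gamma[p](1)$ in Theorem 3.1 is already hit from ${\hat U}'_F/p$, i.e.\ that Lemma 2.4's sequence at level $F$ surjects appropriately onto it. When $a > e$ the exponent $p^e$ of $X^\circ$ is strictly smaller than $p^a$, so the cyclotomic character is trivial modulo the exponent of $X^\circ$ itself, giving $X^\circ(-1) \cong X^\circ$ as $\Gamma$-modules (not just on $p$-torsion), and then the Theorem 3.1 sequence for $i=-1$ is isomorphic as a whole to the one for $i=0$, whose first term is exactly ${\hat U}'_F/p$ by Lemma 2.4 — hence ${\cal A}'_F = \div({\mathfrak L}^\Gamma)[p] = {\hat U}'_F/p$.

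For the second assertion, suppose $a = e$ and $\Gamma$ acts trivially on $X^\circ$. I would argue by contradiction: if ${\cal A}_F = {\hat U}_F/p$ then ${\cal A}'_F = {\hat U}'_F/p = \div({\mathfrak L}^\Gamma)[p]$ by part (b) of Proposition 2.2, so by Theorem 2.1 with $i = -1$ the term $X^\circ(-1)^\Gamma/p = (X^\circ/p)(-1)$ would vanish (using that $\Gamma$ acts trivially, so $X^\circ(-1)^\Gamma = X^\circ(-1) = X^\circ \otimes \ZpZ$-twist and the twist on the quotient by $p$ is again trivial since $p \mid p^a = p^e$), forcing $X^\circ/p = 0$, hence $X^\circ = 0$ and $e = 0$, hence $a = 0$; but $a = 0$ contradicts $\mu_p \subset F$. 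So as long as $X^\circ \neq 0$ — which is implicit once $e \ge 1$, i.e.\ once the statement "$a = e$" is non-vacuous — we get ${\cal A}_F \neq {\hat U}_F/p$. The main obstacle I anticipate is the bookkeeping of the twist on the \emph{cokernel} term: one must check that "trivial mod $p^a$" genuinely yields "trivial on $X^\circ[p]$" versus "trivial on $X^\circ/p$" versus "trivial on all of $X^\circ$", and that the quantifier "$a > e$" (rather than "$a \ge e$" or "$a \ge 1$") is exactly what is needed to upgrade triviality-on-$p$-torsion to triviality-on-$X^\circ$, which is the hinge between the equality case and the inequality case. Everything else is a diagram chase through the two commutative squares already displayed in the proof of Theorem 3.1.
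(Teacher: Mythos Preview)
There is a genuine gap in both halves of the argument, and it is precisely the point where the threshold $a>e$ versus $a=e$ matters.

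For the first assertion, your key step is: ``when $a>e$ \ldots\ $X^\circ(-1)\cong X^\circ$ as $\Gamma$-modules \ldots\ and then the Theorem~3.1 sequence for $i=-1$ is isomorphic as a whole to the one for $i=0$''. But the isomorphism $X^\circ(-1)\cong X^\circ$ of $\Gamma$-modules holds as soon as $a\ge e$ (the cyclotomic character is trivial modulo the exponent $p^e$ of $X^\circ$), so it cannot be what separates $a>e$ from $a=e$. What you actually need is that the \emph{connecting maps} ${\mathfrak L}(i)^\Gamma\to X^\circ(i)_\Gamma$ for $i=0$ and $i=-1$ coincide after restricting to $p$-torsion; equality of the outer terms of the short exact sequence does not give this, because the extension class of $0\to X^\circ\to{\mathfrak{\tilde N}}\to{\mathfrak L}\to 0$ genuinely changes under the twist. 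In the paper this is handled on the Kummer-dual side: from $0\to\fr_\Lambda{\mathfrak X}_\infty\to\Lambda^{r_2}\to H\to 0$ one writes the two snake maps explicitly as $\sigma(h)=(\gamma-1)\lambda$ and $\sigma_1(h)=(\kappa(\gamma)\gamma-1)\lambda$ modulo $(p,\gamma-1)$, so that $\sigma_1(h)-\sigma(h)=(\kappa(\gamma)-1)\gamma\lambda\in p^a\Lambda^{r_2}$. Since $p^eH=0$ gives $p^e\Lambda^{r_2}\subset\fr_\Lambda{\mathfrak X}_\infty$, one needs $a\ge e+1$ to force this difference into $p\cdot\fr_\Lambda{\mathfrak X}_\infty$, and that is exactly where $a>e$ (and not merely $a\ge e$) enters.

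For the second assertion, your contradiction does not follow. From ${\cal A}'_F={\hat U}'_F/p$ you cannot conclude via Theorem~2.1 that $X^\circ(-1)^\Gamma/p$ vanishes: Theorem~2.1 for $i=-1$ identifies $X^\circ(-1)^\Gamma/p$ with the cokernel of ${\tilde U}'_F/p\to{\cal A}'_F$, and if ${\cal A}'_F={\hat U}'_F/p$ that cokernel is $X_\infty^\Gamma/p$ by Proposition~1.5, which under your hypotheses equals $X^\circ/p\neq 0$ --- no contradiction. The paper instead exhibits an explicit $h\in H$ of maximal order $p^e$ for which $(\kappa(\gamma)-1)\gamma\lambda=up^e\gamma\lambda$ (with $u\in\Z_p^\times$) is \emph{not} in $p\cdot\fr_\Lambda{\mathfrak X}_\infty$, hence $\sigma_1(h)\neq\sigma(h)$ and the two kernels differ. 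The moral is that you cannot read off the equality or inequality of ${\cal A}_F$ and ${\hat U}_F/p$ from the isomorphism type of $X^\circ$ alone; you must compare the actual boundary maps, and the paper's explicit lift to $\Lambda^{r_2}$ is what makes that comparison possible.
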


\begin{proof}
According to the proof of Theorem 3.1, we have two exact sequences 

$$\begin{array}{ccccccccccc} 
0  & \to & {\cal A}'_F & \to & {\mathfrak L}(-1)^\Gamma [p](1) = {\mathfrak L}^{\Gamma}[p]  & \to & (X^\circ(-1))_{\Gamma}[p](1) & \to & 0 \\
    &      & 		&  	 &  \parallel &  &   &  &  &  & \\
0  & \to & {\hat U}'_F/p  & \to & {\mathfrak {\hat N}}^{\Gamma}[p] & \to & (X^\circ)_{\Gamma}[p] & \to & 0 
\end{array} $$ 
which we need to link by vertical maps becoming equalities. 
These exact sequences were obtained by Kummer duality from  
$$\begin{array}{ccccccccccc} 
0  & \to &  H(1)^\Gamma/p & \stackrel{\sigma_1} \to & (\fr_{\Lambda} \mathfrak{X}_\infty(1))_\Gamma/p & \to & 
\fr_{\Z_p}((\fr_{\Lambda} \mathfrak{X}_\infty (1))_\Gamma)/p & \to & 0 \\
    &      & 		&  	 &  \parallel &  &   &  &  &  & \\
0  & \to & H^\Gamma/p  & \stackrel{\sigma} \to & (\fr_{\Lambda} \mathfrak{X}_\infty)_\Gamma/p & \to & 
\fr_{\Z_p}((\fr_{\Lambda} \mathfrak{X}_\infty )_\Gamma)/p & \to & 0 
\end{array} $$ 
where the maps  $\sigma$  and  $\sigma_1$  originate from the snake lemma.   
More precisely the map  
$\sigma : H^\Gamma/p \to (\fr_{\Lambda} \mathfrak{X}_\infty)_\Gamma/p = (\fr_{\Lambda} \mathfrak{X}_\infty)/(\omega,p)$  
(where $\omega=\gamma -1$) is defined from the exact sequence  
$$ 0 \to \fr_{\Lambda} \mathfrak{X}_\infty   \to \Lambda^{r_2} \to H \to 0$$ 
in the following way: for $h \in H^\Gamma$,  let  $\lambda$  be any lift of  $h$  in $ \Lambda^{r_2}$. 
Then  $\omega(\lambda) \in \fr_{\Lambda} \mathfrak{X}_\infty$  and  $\sigma$ sends  $h$ mod $p$ to $\omega(\lambda)$ mod $(\omega, p)$. 
Likewise, since  $H^\Gamma = H(1)^\Gamma$  by hypothesis,  we start with the same $h$ that we lift to the same  $\lambda \in \Lambda^{r_2}$.  
Put  $\omega^{(1)}:= \kappa(\gamma) \gamma -1.$
Then  $\omega^{(1)}(\lambda) \in \fr_{\Lambda} \mathfrak{X}_\infty$  and  $\sigma_1$ sends  $h$ mod $p$ to 
$\omega^{(1)}(\lambda)$ mod $(\omega^{(1)}, p)$. 
Now,  $(\omega^{(1)}, p) = (\omega, p)$  and  $(\omega^{(1)} - \omega)(\lambda) \in p^{e+1}\Lambda^{r_2}$ by hypothesis. Therefore  
$(\omega^{(1)} - \omega)(\lambda) \in p \fr_{\Lambda} \mathfrak{X}_\infty$  since  $p^e$ annihilates  $H$. 
We then have the following commutative square 
$$\begin{array}{ccccccccccc} 
H(1)^\Gamma/p & \stackrel{\sigma_1} \to & (\fr_{\Lambda} \mathfrak{X}_\infty(1))_\Gamma/p \\
\parallel    &      &    \parallel   \\
H^\Gamma/p & \stackrel{\sigma} \to & (\fr_{\Lambda} \mathfrak{X}_\infty)_\Gamma/p 
\end{array} $$ 
which implies  the equality  $\fr_{\Z_p}((\fr_{\Lambda} \mathfrak{X}_\infty (1))_\Gamma)/p = \fr_{\Z_p}((\fr_{\Lambda} \mathfrak{X}_\infty)_\Gamma)/p$, 
i.e. the equality  
${\cal A}'_F = {\hat U}'_F/p$.  
Hence  ${\cal A}_F = {\hat U}_F/p$. 

Suppose now that  $a=e$  and  $\Gamma$  acts trivially on  $X^\circ$. 
Then  $H(1)^\Gamma = H^\Gamma = H$.   
Keeping the same notations, choose  $h \in H$  of maximal order  $p^e$.  
Then  
$(\omega^{(1)}, p) = (\omega, p)$  and  $(\omega^{(1)} - \omega)(\lambda) \in p^e\Lambda^{r_2}$
and as before 
$(\omega^{(1)} - \omega)(\lambda) \in \fr_{\Lambda} \mathfrak{X}_\infty$. 
Putting  $\kappa(\gamma)=1+u p^e$, with  $u \in \Zp^*$,  we have  
$x:=(\omega^{(1)} - \omega)(\lambda) = u p^e \gamma(\lambda) \in \fr_{\Lambda} \mathfrak{X}_\infty$. 
If  $x \in p \fr_{\Lambda} \mathfrak{X}_\infty$,  we would get  
$p^{e-1} \gamma(\lambda) \in \fr_{\Lambda} \mathfrak{X}_\infty$, 
contrary to the choice of  $h$. 
We have thus shown that  $\sigma_1(h) \neq  \sigma (h)$ 
\end{proof}

{\bf Remark} 
Of course, an analogous result holds when replacing the pair  (${\cal A}_F,  {\hat U}_F/p$)  by any pair taken from  \{${\cal A}_F,  {\hat U}_F/p, {\cal T}_F$\}  or by any pair  ($D_i, D_j$)  as in the remark following Proposition 2.7, satisfying Greenberg's conjecture alluded to in the proof of Proposition 2.3. For a general exponent  $p^k$, see \cite [Theorem 2.2]{Va06}.

\subsection{Bounding from above by the Bertrandias-Payan radical} 

In this subsection, we introduce a certain Kummer radical  ${\cal B}_F$ (see \cite {Ng86})   coming from global-local considerations concerning embeddability in cyclic  $p$-extensions of arbitrary degrees. The radical  ${\cal B}_F$  contains all the previous ones  
${\cal A}_F,  {\cal T}_F$  and  ${\hat U}_F/p$. 
The determination of the respective quotients sheds additional light on the interrelationship between the three radicals themselves.

\begin{defn} 
A  (necessarily cyclic)  $p$-extension  $K/F$  is called {\it infinitely embeddable} (resp. $\Z_p$-embeddable) if it can be embedded in cyclic  $p$-extensions of arbitrarily large degrees (resp. in a  $\Z_p$-extension) of  $F$.  

The compositum of all the infinitely embeddable  $p$-extensions of  $F$  is called the {\it field of Bertrandias-Payan}  $F^{BP}$ (in reference to \cite {BP72}). 
An infinitely embeddable extension is necessarily unramified outside  $p$-adic primes:  $F^{BP} \subseteq {\hat F}$.  
Hence the Galois group  $BP_F :=\gal (F^{BP}/F)$  is a quotient of  ${\mathfrak X}_F$. 
Moreover  $F^{BP}$  obviously contains the composite of all  $\Z_p$-extensions of  $F$ so that 
$$\fr_{\Z_p} BP_F = \fr_{\Z_p} {\mathfrak X}_F.$$ 
\end{defn}

The following criterion is a consequence of class field theory (see \cite {BP72}):  

\begin{prop} 
Assume that  $F$ contains  $\mu_p = <\zeta_p>$  and let  $K:=F (\sqrt[p]{a})$, $a \in F^\bullet$  be a cyclic extension of degree  $p$. 
The following conditions are equivalent: \\
(i) $F (\sqrt[p]{a})/F$  is infinitely embeddable; \\
(ii) $F_v(\sqrt[p]{a})/F_v$  is $\Z_p$-embeddable for all finite primes  $v$; \\
(iii) $a \in F^{\bullet p}_v  {\tilde F}^\bullet_v$  for all finite primes  $v$  (note that  ${\tilde F}^\bullet_v= U_v$ for  $v \nmid p$); \\ 
(iv) $(a,\zeta_p)_v=1$  for all finite primes  $v$. Here $(\cdot,\cdot)_v$ stands for the maximal degree Hilbert symbol attached to the local field  $F_v$; \\
(v) the symbol $\{a,\zeta_p \}$  belongs to the wild kernel  $WK_2(F)$, 
i.e. the intersection in  $K_2(F)$  of the kernels of all the Hilbert symbols.
\end{prop}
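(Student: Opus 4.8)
The plan is to run the equivalences around a convenient cycle rather than proving each pair directly: first the essentially tautological step $(iv)\Leftrightarrow(v)$, then the purely \emph{local} chain $(ii)\Leftrightarrow(iii)\Leftrightarrow(iv)$ prime by prime, and finally the \emph{global-to-local} bridge $(i)\Leftrightarrow(ii)$, which carries the real arithmetic content. The step $(iv)\Leftrightarrow(v)$ is immediate from the definitions: by construction $WK_2(F)$ is the intersection, over \emph{all} places $v$ of $F$, of the kernels of the Hilbert symbol maps $K_2(F)\to\mu(F_v)$, so $\{a,\zeta_p\}\in WK_2(F)$ says exactly that $(a,\zeta_p)_v=1$ for every $v$; since $p$ is odd with $\mu_p\subset F$, the field $F$ is totally imaginary, so the archimedean places impose no condition and $(v)$ reduces to $(iv)$.

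For the local chain I would argue at each finite prime $v$ by local class field theory, as in \cite{BP72}. If $v\nmid p$ all three conditions collapse to one: the unique $\Z_p$-extension of $F_v$ is the unramified one, so $F_v(\sqrt[p]{a})/F_v$ is $\Z_p$-embeddable iff it is unramified iff $p\mid v(a)$, which is precisely $a\in F_v^{\bullet p}U_v=F_v^{\bullet p}\tilde F_v^{\bullet}$, and is also precisely the vanishing of the tame symbol $(a,\zeta_p)_v$ (computed from the explicit tame-symbol formula, using that $\zeta_p$ reduces to an element of exact order $p$ in the residue field). If $v\mid p$, set $p^{c_v}:=\#\mu_{p^\infty}(F_v)$ and write $\bar F_v^{\bullet}\cong\mu_{p^{c_v}}\times(\text{free }\Z_p\text{-module})$; a continuous character $\bar F_v^{\bullet}\to\Z/p$ extends to a continuous $\Z_p$-valued one iff it kills the torsion part $\mu_{p^{c_v}}$. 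Hence $F_v(\sqrt[p]{a})/F_v$ is $\Z_p$-embeddable iff the reciprocity character cut out by $a$ is trivial on $\mu_{p^{c_v}}$, iff (reciprocity again) $a$ is a norm from $F_v(\sqrt[p^{c_v}]{\zeta_p})$. But $F_v(\sqrt[p^{c_v}]{\zeta_p})=F_v(\zeta_{p^{c_v+1}})$ is exactly the first layer of the local cyclotomic $\Z_p$-extension $F_{\infty,v}/F_v$, so the norm condition reads $(a,\zeta_p)_v=1$ for the maximal-degree symbol, which gives $(ii)\Leftrightarrow(iv)$; finally that (degree-$p$) norm group coincides with $F_v^{\bullet p}\tilde F_v^{\bullet}$, because both are subgroups of index $p$ in $F_v^{\bullet}$ and the inclusion $\supseteq$ holds ($F_v^{\bullet p}=N_{L/F_v}(F_v^{\bullet})\subseteq N_{L/F_v}(L^{\bullet})$ for any cyclic $L/F_v$ of degree $p$, and universal norms in the tower with first layer $L$ are in particular norms from $L$), whence $(iii)\Leftrightarrow(iv)$ and so also $(ii)\Leftrightarrow(iii)$.

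For $(i)\Leftrightarrow(ii)$ I would pass to Galois cohomology. By Kummer theory the degree-$p$ extension $F(\sqrt[p]{a})/F$ corresponds to a nonzero character $\chi_a\in H^1(F,\mu_p)\cong F^{\bullet}/F^{\bullet p}$, and (taking degree-$p^n$ subfields of sufficiently large cyclic extensions) $F(\sqrt[p]{a})/F$ is infinitely embeddable iff, for every $n$, $\chi_a$ lifts to $H^1(F,\Z/p^n)$ along $\Z/p^n\twoheadrightarrow\Z/p$; since $\chi_a\neq0$, any such lift is automatically surjective, so there is no "proper solution" subtlety. The implication $(i)\Rightarrow(ii)$ is soft: restricting the global lifts to a decomposition group $G_{F_v}$ gives compatible-in-$n$ lifts of $\chi_a|_{G_{F_v}}$, and since the sets of such lifts are finite at each level (local field), an inverse-limit argument produces a lift to a continuous $G_{F_v}\to\Z_p$, i.e. a local $\Z_p$-extension containing $F_v(\sqrt[p]{a})$. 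For $(ii)\Rightarrow(i)$ one computes the obstruction: lifting $\chi_a$ to $\Z/p^n$ is obstructed by $\chi_a^{*}(e_n)\in H^2(F,\Z/p^{n-1})$, where $e_n$ is the class of $1\to\Z/p^{n-1}\to\Z/p^n\to\Z/p\to1$ (central kernel, trivial action); by the already-proved local equivalences hypothesis $(ii)$ forces $\chi_a^{*}(e_n)$ to be trivial at every place, so it lies in the group of everywhere-locally-trivial classes in $H^2(F,\Z/p^{n-1})$, which by Poitou--Tate duality is dual to the analogous group in $H^1(F,\mu_{p^{n-1}})$, namely the Grunwald--Wang obstruction $\ker\bigl(F^{\bullet}/F^{\bullet p^{n-1}}\to\bigoplus_v F_v^{\bullet}/F_v^{\bullet p^{n-1}}\bigr)$. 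This group is trivial because $p$ is odd, so the embedding problem is solvable for every $n$, giving $(i)$.

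The step I expect to be the real obstacle is $(ii)\Rightarrow(i)$: it is the only non-formal point, and it is exactly where the hypothesis "$p$ odd" is used, via the vanishing of $\ker\bigl(F^{\bullet}/F^{\bullet p^{n-1}}\to\bigoplus_v F_v^{\bullet}/F_v^{\bullet p^{n-1}}\bigr)$ — the Grunwald--Wang phenomenon, which fails in the classical special case at $p=2$. Everything else reduces to local class field theory together with the definition of the wild kernel; the residual bookkeeping (the explicit tame symbol for $v\nmid p$, the index count identifying $F_v^{\bullet p}\tilde F_v^{\bullet}$ with the relevant norm group, and the standard compatibilities between Hilbert symbols of different degrees implicitly invoked above) is routine, and in any case all of it is contained in \cite{BP72}.
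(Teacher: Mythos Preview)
Your argument is correct. The equivalences $(ii)\Leftrightarrow(iii)\Leftrightarrow(iv)\Leftrightarrow(v)$ are handled cleanly via local class field theory and the definition of $WK_2$, and your treatment of $(ii)\Rightarrow(i)$ through the obstruction class in $H^2(F,\Z/p^{n-1})$, Poitou--Tate duality, and the vanishing of $\text{Sha}^1(F,\mu_{p^{n-1}})$ for odd $p$ (Grunwald--Wang) is a valid and self-contained route. The observation that $F$ is totally imaginary disposes of the archimedean places, and your remark that any lift of a nonzero $\chi_a$ to $\Z/p^n$ is automatically surjective closes the ``proper solution'' loophole.

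Note, however, that the paper does not actually prove this proposition: it states it as ``a consequence of class field theory (see \cite{BP72})'' and moves on. So there is no paper-proof to compare against in detail. Your write-up therefore supplies considerably more than the paper does. The original argument in \cite{BP72} proceeds by idelic class field theory rather than by your cohomological obstruction calculus; the two approaches are equivalent in content, but yours makes the role of the odd-$p$ hypothesis (via Grunwald--Wang) especially transparent, which is a genuine expository gain. Your closing remark that ``all of it is contained in \cite{BP72}'' is thus in perfect agreement with the paper's own stance.
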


\begin{defn} 
The radical of Bertrandias-Payan  ${\cal B}_F$  is, by definition, the subgroup of  $F^\bullet/F^{\bullet p}$ 
consisting of classes  $a$ mod  $F^{\bullet p}$  such that  $a$ verifies the preceding equivalent conditions. 
Clearly  ${\cal B}_F$  contains  ${\cal A}_F,  {\hat U}_F/p$  and the Tate kernel  ${\cal T}_F$. 
Condition (i)  means that  ${\cal B}_F = \hom(BP_F, \mu_p)$  and the quotient  ${\cal B}_F/{\cal A}_F$  
measures the obstruction between "global $\Z_p$-embeddability"  and "everywhere local $\Z_p$-embeddability". 
Condition (iii)  (resp. (iv), resp. (v))  says that  ${\cal B}_F$  coincides with the radical denoted by  
$D_F^{(1)}$ (resp. $B/F^{\bullet p}$,  resp. ${\cal E}/F^{\bullet p}$) in 
\cite {Ko91} (resp. \cite {Gre78}, resp. \cite [section 3]{KC78}). 
\end{defn}

\begin{thm} 
Suppose that  $F$ contains  $\mu_p$. Then \\
(i) ${\cal B}_F/{\cal T}_F \simeq X_\infty(1)_{\Gamma}[p] (-1)$ \\
(ii) ${\cal B}_F/{\cal A}_F \simeq X_\infty(-1)_{\Gamma}[p] (1)$ \\
(iii) ${\cal B}_F/({\hat U}_F/p) \simeq (X_\infty)_{\Gamma}[p].$
\end{thm}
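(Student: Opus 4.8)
The plan is to treat the three isomorphisms uniformly by exploiting the fact that, for each relevant twist $i$, there is a Kummer-theoretic description of the radical in play sitting inside the ``universal Kummer radical'' ${\cal K} = F_\infty^\bullet \otimes \QpZp$. The key observation is that ${\cal B}_F$, being $\hom(BP_F,\mu_p)$, should be identifiable with ${\cal K}^\Gamma[p]$ up to the cyclotomic piece $\mu(F)/p$; more precisely, from $\fr_{\Z_p}BP_F = \fr_{\Z_p}{\mathfrak X}_F$ and the isomorphism ${\mathfrak L}(i) = \hom(\fr_\Lambda({\mathfrak X}_\infty)(-i),\mu_{p^\infty})$, one gets that $D_i(-i) = \div({\mathfrak L}(i)^\Gamma)[p](-i)$ is the radical of $F(\sqrt[p]{{\hat U}_F})$ (case $i=0$), of $F(\sqrt[p]{{\cal A}_F})$ (case $i=-1$), and of $F(\sqrt[p]{{\cal T}_F})$ (case $i=1$), exactly as recorded in Proposition 2.3 and used in Proposition 2.7. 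So the first step is to pin down ${\cal B}_F$ itself as the ``full'' $\Gamma$-fixed piece, i.e. to show that ${\cal B}'_F$ is the Kummer radical of $(\fr_{\Z_p}{\mathfrak X}'_F)/p$ together with whatever finite correction comes from the difference between $\fr_\Lambda{\mathfrak X}_\infty$ and the full ${\mathfrak X}_\infty$ at the $\Gamma$-invariant level.

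Concretely, I would revisit the co-descent diagram used in the proof of Proposition 2.3(a): the exact sequences $0 \to \tor_\Lambda{\mathfrak X}_\infty \to {\mathfrak X}_\infty \to \fr_\Lambda{\mathfrak X}_\infty \to 0$ and $0 \to \fr_\Lambda{\mathfrak X}_\infty \to \Lambda^{r_2} \to H \to 0$ with $H \simeq \hom(X^\circ,\mu_{p^\infty})$. Taking $\Gamma$-homology of the second sequence gives $0 \to H^\Gamma \to (\fr_\Lambda{\mathfrak X}_\infty)_\Gamma \to \Z_p^{r_2}$ and $H_\Gamma \hookrightarrow$ the cokernel, and dualizing à la Kummer produces, for each $i$, a short exact sequence $0 \to \hom(\fr_{\Z_p}(\cdots),\mu_{p^\infty}) \to {\mathfrak L}(i)^\Gamma \to X^\circ(i)_\Gamma \to 0$ (the $i=-1$ case is exactly what appears before Corollary 2.5). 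Applying the snake lemma to multiplication by $p$ then gives $0 \to D_i(-i) \to {\mathfrak L}(i)^\Gamma[p](-i) \to X^\circ(i)_\Gamma[p](-i) \to 0$, and the middle term ${\mathfrak L}(i)^\Gamma[p]$ is, up to twist, a copy of ${\cal B}'_F$ — independent of $i$ because ${\cal K}^\Gamma[p] = {\cal K}(i)^\Gamma[p](-i)$ when $\mu_p \subset F$, and because Greenberg's conjecture (known in the cases $i=\pm1$ by Leopoldt and by Tate's $K_2$ results, and here the relevant statement at $i=0$ reduces to the identification of ${\mathfrak L}^\Gamma$ made in Lemma 2.4) forces $\div({\cal K}(i)^\Gamma) = \div({\mathfrak L}(i)^\Gamma)$. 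So for $i=1,-1,0$ respectively I obtain
$$0 \to {\cal T}_F \to {\cal B}_F \to X^\circ(1)_\Gamma[p](-1) \to 0,$$
$$0 \to {\cal A}_F \to {\cal B}_F \to X^\circ(-1)_\Gamma[p](1) \to 0,$$
$$0 \to {\hat U}_F/p \to {\cal B}_F \to X^\circ_\Gamma[p] \to 0.$$

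The last step is to replace $X^\circ$ by $X_\infty$ in the three quotients. Here I use that $X_\infty$ is $\Lambda$-torsion, so from the pseudo-exact sequence $0 \to (X_\infty)^\Gamma \to Y_\infty \xrightarrow{\gamma-1} Y_\infty \to (X_\infty)_\Gamma \to 0$ (with $Y_\infty$ the non-$\mu$ part) one sees that $(X_\infty)^\Gamma$ and $(X_\infty)_\Gamma$ have equal $\Z_p$-rank; combined with $X^\circ$ being the maximal finite submodule of $X_\infty$ and with the $\Gamma$-action subtleties, one checks that $X^\circ(j)_\Gamma[p] \simeq X_\infty(j)_\Gamma[p]$ for each twist $j$ in question — the point being that the free (or pseudo-null-free) part of $X_\infty$ contributes nothing to the $p$-torsion of the appropriately twisted coinvariants once Gross' conjecture for the $F_n$'s ($\Delta_\infty = 0$) is in force, which is what makes ${\mathfrak L} = {\hat{\mathfrak N}}$ and keeps the asymptotic defect out of the picture. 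I expect the main obstacle to be precisely this final bookkeeping: justifying cleanly that the passage from $X^\circ$ to $X_\infty$ in the $p$-torsion of the twisted $\Gamma$-coinvariants is harmless, i.e. that the non-finite part of $X_\infty$ has trivial $(\gamma\kappa(\gamma)^j - 1)$-coinvariant $p$-torsion. Equivalently, one needs that for the relevant twists the characteristic ideal of $X_\infty(j)$ is prime to the relevant $\omega^{(j)}$, which is automatic for $j \ne 0$ generically but for $j=0$ is where one genuinely uses Gross' conjecture. Everything else is Kummer duality and diagram chasing already rehearsed in Sections 1 and 2.
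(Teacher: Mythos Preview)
Your approach has a genuine gap at the identification of ${\mathfrak L}(i)^\Gamma[p](-i)$ with ${\cal B}'_F$. Greenberg's conjecture (where known) only asserts $\div({\cal K}(i)^\Gamma) = \div({\mathfrak L}(i)^\Gamma)$, i.e.\ equality of the \emph{divisible} parts; it says nothing about the full $p$-torsion of the $\Gamma$-fixed points, and ${\cal K}(i)^\Gamma[p]$ is in any case much larger than ${\cal B}'_F$ (for $i=1$ it is essentially all of $F^\bullet/F^{\bullet p}$ modulo $\mu(F)/p$, by the upper row of the diagram in the proof of Proposition 2.3(c)). From Lemma 2.4 and the snake lemma one gets, under Gross' conjecture for $F$, the exact sequence $0 \to {\hat U}'_F/p \to {\mathfrak L}^\Gamma[p] \to (X^\circ)_\Gamma[p] \to 0$, so $\dim_{\F_p} {\mathfrak L}^\Gamma[p] = r_2 + \dim (X^\circ)_\Gamma[p]$; whereas part (iii) of the Theorem gives $\dim_{\F_p} {\cal B}'_F = r_2 + \dim (X_\infty)_\Gamma[p]$. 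These differ whenever $(X^\circ)_\Gamma[p] \neq (X_\infty)_\Gamma[p]$, which can happen even under Gross' conjecture: a $\Lambda$-summand of type $\Lambda/(T-p)$ has trivial maximal finite submodule but contributes $\Z/p$ to the $\Gamma$-coinvariants. So the ``final bookkeeping'' you flagged is not bookkeeping at all---it fails, and its failure is exactly mirrored by the failure of your identification ${\mathfrak L}^\Gamma[p] \simeq {\cal B}'_F$. Note also that the Theorem as stated assumes neither Leopoldt's nor Gross' conjecture, whereas your route through Proposition 2.3 would impose both.

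The paper's proof bypasses ${\mathfrak L}$ entirely and lands directly on $X_\infty$ rather than $X^\circ$. For (i), one observes via Proposition 3.4(v) that ${\cal B}_F(1)$ is the inverse image of $WK_2(F)[p]$ under Tate's map $a \otimes \zeta_p \mapsto \{a,\zeta_p\}$, and then Poitou-Tate duality together with Tate's lemma give $WK_2(F)\{p\}^* \simeq X_\infty(1)_\Gamma$. For (ii), one dualizes $0 \to \tor_{\Z_p}BP_F \to BP_F \to \fr_{\Z_p}BP_F = \fr_{\Z_p}{\mathfrak X}_F \to 0$ and computes $(\tor_{\Z_p}BP_F)^*$ as a local-global kernel in $H^2(G_S(F),\Z_p)$, which Poitou-Tate again identifies with $X_\infty(-1)_\Gamma$. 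For (iii), Sinnott's exact sequence enlarged to all finite places has kernel ${\hat U}_F$ and cokernel $(X_\infty)_\Gamma$, and a snake-lemma chase against the characterization 3.4(iii) of ${\cal B}_F$ gives ${\cal B}_F/({\hat U}_F/p) \simeq (X_\infty)_\Gamma[p]$. The common thread is a direct duality-theoretic description of each quotient as the $p$-torsion of a twisted coinvariant of $X_\infty$, with no detour through the maximal finite submodule.
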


\begin{proof} 
Tate's theorem \cite {Ta76}  asserts the existence of an exact sequence  
$$ 0 \to {\cal T}_F(1) \to  F^{\bullet} /F^{\bullet p}(1) \stackrel{\alpha} \to K_2(F)[p] \to 0,$$ 
where the map $\alpha$  is defined by  $\alpha(a \otimes \zeta_p)=\{a,\zeta_p \}$. 
Put  $H^1(G_S(F), \mu_p) = \Delta_F /F^{\bullet p},$   
where  $\Delta_F :=\{a \in F^{\bullet} /  v(a) \equiv 0 \pmod{p}  \; \forall v \nmid p \}.$  
Then, if we restrict the map  $\alpha$  to  $\Delta_F /F^{\bullet p}(1)$, the above exact sequence becomes 
$$ 0 \to {\cal T}_F(1) \to  \Delta_F /F^{\bullet p}(1) \stackrel{\alpha} \to R_2(F)[p] \to 0,$$ 
where  $R_2(F)$  denotes the tame kernel of  $F$, i.e.  the intersection in  $K_2(F)$  of the kernels of all the tame symbols. 
By property (v) of Proposition 3.4,  ${\cal B}_F$  is the inverse image of  $WK_2(F)[p]$  under the map $\alpha$, hence an exact sequence 
$$ 0 \to {\cal T}_F(1) \to  {\cal B}_F(1) \stackrel{\alpha} \to WK_2(F)[p] \to 0.$$ 
But it is known that the $p$-primary part of the  wild kernel  $WK_2(F) \{p\}$  can be expressed as a codescent module: 
the exact sequence
$$0 \to WK_2(F) \{p\} \to H^2(G_S(F), \Z_p(2)) \to \oplus_{v \mid p} H^2(F_v, \Z_p(2))$$
together with Poitou-Tate duality imply that  
$$WK_2(F) \{p\}^* \simeq Ker_S^1(F, \Q_p/\Z_p(-1)) := \ker (H^1(G_S(F), \Q_p/\Z_p(-1))\to \oplus_{v \mid p}H^1(F_v, \Q_p/\Z_p(-1))).$$ 
The cohomological triviality of  $\Q_p/\Z_p(-1)$  (Tate's Lemma) then shows that  
$$Ker_S^1(F, \Q_p/\Z_p(-1))^* \simeq X_\infty(1)_\Gamma.$$
Thus  (i)  is proved. 

(ii) The exact sequence 
$$ 0 \to \tor_{\Z_p} BP_F  \to  BP_F  \to \fr_{\Z_p} BP_F = \fr_{\Z_p} {\mathfrak X}_F \to 0,$$ 
yields, by Kummer duality, an exact sequence 
$$ 0 \to  {\cal A}_F  \to  {\cal B}_F  \to \hom (\tor_{\Z_p} BP_F, \mu_p) \to 0.$$ 
It remains to determine $ \hom (\tor_{\Z_p} BP_F, \mu_p) \simeq (\tor_{\Z_p} BP_F/p)^* (1)\simeq (\tor_{\Z_p} BP_F)^*[p](1)$. 
We will follow the proof of Theorem 4.2 of  \cite {Ng86} (but without appealing to Leopoldt's conjecture). 
The equivalence between the first two parts of Proposition 3.4 shows that  
$$(BP_F)^* =\{\chi \in {\mathfrak X}_F^* /  \chi_v \in \div({\mathfrak X}_v^*), \forall v \mid pÊ \}.$$ 
Here  ${\mathfrak X}_v$  is the Galois group over  $F_v$  of the maximal abelian  pro-$p$-extension of  $F_v$  and  
$\chi_v$  is the character obtained by restricting $\chi$  to  ${\mathfrak X}_v$. Then 
$$\begin{array}{llll}
(\tor_{\Z_p} BP_F)^* & \simeq (BP_F)^*/\div({\mathfrak X}_F^*)  \\
& \simeq \ker ({\mathfrak X}_F^*/\div \to \oplus_{v \mid p} {\mathfrak X}_v^*/\div)  \\
& \simeq \ker (H^1(G_S(F), \Q_p/\Z_p)/\div \to \oplus_{v \mid p} H^1(F_v, \Q_p/\Z_p)/\div).   
\end{array}$$
By a result of Tate in $p$-adic cohomology \cite [section 2]{Ta76}, the boundary map induces an isomorphism 
$H^1(G_S(F), \Q_p/\Z_p))/\div \simeq \tor_{\Z_p} H^2(G_S(F), \Z_p)$ 
(and similarly for the local cohomology groups), hence 
$$(\tor_{\Z_p} BP_F)^*[p] \simeq \ker (H^2(G_S(F), \Z_p) \to \oplus_{v \mid p} H^2(F_v, \Z_p))[p].$$
By Poitou-Tate's duality, 
$\ker (H^2(G_S(F), \Z_p) \to \oplus_{v \mid p} H^2(F_v, \Z_p))$ 
is dual to  $Ker_S^1(F, \Q_p/\Z_p(1))$ (the notation is analogous to that of (i)), and Tate's lemma once again shows that  
$Ker_S^1(F, \Q_p/\Z_p(1))^* \simeq X_\infty(-1)_\Gamma$. 
Thus  (ii)  is proved. 

(iii) 
Coming back to Sinnott's exact sequence, let us note that  $X_\infty$  is actually the Galois group over  $F_\infty$  of the maximal abelian pro-$p$-extension of  $F_\infty$  which is totally decomposed at every finite place of  $F_\infty$.  Hence, for any set of places  $T$  containing the  $p$-adic primes, we have the  $T$-analogue of Sinnott's exact sequence 
$${\bar U}_F^T \overset{} \longrightarrow {\tilde \oplus}_{v\in T}\bar{F}_v^{\bullet}/\tilde{F}_v^{\bullet} \overset{Artin} \longrightarrow  (X_\infty)_\Gamma \to A_F^T  \to 0,$$
where  $ {\bar U}_F^T$  denotes the  pro-$p$-completion of the group of  $T$-units of  $F$  and  
$A_F^T$  the  $p$-group of  $T$-classes of  $F$. 
Taking  $T$  to be the set of all finite primes of  $F$, we get an exact sequence  
$${\bar F^{\bullet}} \overset{g} \longrightarrow {\tilde \oplus}_{v\nmid \infty}\bar{F}_v^{\bullet}/\tilde{F}_v^{\bullet} \overset{Artin} \longrightarrow  (X_\infty)_\Gamma \to  0,$$
where  ${\bar F^{\bullet}}$  denotes the pro-$p$-completion of  $F^{\bullet}$, which injects into  
${\oplus}_{v\nmid \infty}\bar{F}_v^{\bullet}$  by Hasse's principle. 
The kernel of  $g$  consists of elements  $x \in {\bar F^{\bullet}}$  such that  $x \in {\tilde F}_v^{\bullet}$  for all finite primes  $v$.  
Since  ${\tilde F_v^{\bullet}} = U_v$   for any  $v \nmid p$, it follows at once that  $\ker g={\hat U}_F$. 
Applying the snake lemma for the multiplication by  $p$, we readily get a commutative diagram 
$$\xymatrix{
  &   &   & 0  \ar[d] &  \\
  &   &   & (X_\infty)_\Gamma[p]  \ar[d] &  \\
0 \ar[r] & \hat{U}'_F /p  \ar[r] &  F^{\bullet}/F^{\bullet p}  \ar[r] \ar@{.>}[dr] & \im g/p \ar[r] \ar[d] & 0 \\
  &   &   & {\tilde \oplus}_{v\nmid \infty} {F}_v^{\bullet}/\tilde{F}_v^{\bullet}  F_v^{\bullet p} &  \\
}$$ 
By property (iii)  of Proposition 3.4, the kernel of the dotted arrow is simply the radical of Bertrandias-Payan  ${\cal B}_F$  and a diagram chasing immediately leads to: 
$${\cal B}_F/({\hat U}_F/p) \simeq (X_\infty)_{\Gamma}[p].$$ 
Thus (iii)  is proved. 
\end{proof}

\textbf{Remarks}:
Idelic proofs of properties  (i)  and  (iii)  can be found in  \cite[pages 18 and 14]{Ko91}. 
Also, compare (ii) with the main result of  \cite {Se11}.

\section{Study of a particular case} 
Although essentially our approach has a  theoretical orientation, it lends itself to effective or algorithmic calculations, as will be shown in the examples below. 
\subsection{The case where  $X^\circ$  is of order  $p$} 
We continue to assume the two standard conjectures : Leopoldt's for our base field  $F$ and Gross' for all the layers  $F_n$. As noticed before (proof of Theorem 3.1), the short exact sequence at the infinite level 
$$0 \to X^\circ(i)  \to  {\mathfrak {\tilde N}}(i)  \to {\mathfrak {\hat N}}(i)  \to 0$$
yields 
\begin{equation}
\xymatrix{
0 \ar[r] & X^\circ(i)  \ar[r] &  {\mathfrak {\tilde N}}(i)^\Gamma   \ar[rr]^{f_i} \ar@{^{}->>}[rd] & & {\mathfrak {\hat N}}(i)^\Gamma  \ar[r] & 
X^\circ(i)  \ar[r] & 0 \\
  &   &   &  \div {\mathfrak {\hat N}}(i)^\Gamma \ar@{^{(}->}[ru] &  & &\\
}
\tag{2}
\end{equation}
which gives by applying the snake lemma to multiplication-by-$p$  and assuming that  $X^\circ$  is of order  $p$ 
$$0 \to X^\circ(i) \to  
{\mathfrak {\tilde N}}(i)^\Gamma[p]={\mathfrak {\tilde N}}^\Gamma[p](i) \to 
\div ({\mathfrak {\hat N}}(i)^\Gamma)[p] \to {X^\circ(i)} \to 0.$$
We are interested in  $\div ({\mathfrak {\hat N}}(i)^\Gamma)[p]$ for which the above exact sequence only provides a hyperplane. 
But in the exact sequence (2), the surjectivity of  
${\mathfrak {\tilde N}}(i)^\Gamma \to \div{\mathfrak {\hat N}}(i)^\Gamma $ 
shows that each  $\beta_i  \in \div ({\mathfrak {\hat N}}(i)^\Gamma)[p]$  has a pre-image  
${\tilde \beta}_i  \in {\mathfrak {\tilde N}}(i)^\Gamma[p^2]$.  In fact, we have a commutative diagram 

\begin{equation}
\xymatrix{
0 \ar[r] & X^\circ(i) \ar[r]  \ar[d]^= &  {\mathfrak {\tilde N}}(i)^\Gamma[p] \ar[r] \ar@{^{(}->}[d] & 
\div ({\mathfrak {\hat N}}(i)^\Gamma)[p] \ar[r]  \ar[d]^= & X^\circ(i) \ar[r]  & 0 \\ 
0 \ar[r] & X^\circ(i) \ar[r] \ar[d]^= & f_i^{-1}(\div ({\mathfrak{\hat N}}(i)^\Gamma)[p])  \ar[r]^{f_i} \ar@{^{(}->}[d] & 
\div ({\mathfrak {\hat N}}(i)^\Gamma)[p] \ar[r]  \ar@{^{(}->}[d] &  0 & \\ 
0 \ar[r] & X^\circ(i) \ar[r]  &  {\mathfrak {\tilde N}}(i)^\Gamma[p^2] \ar[r]^{g_i} & 
\div ({\mathfrak {\hat N}}(i)^\Gamma)[p^2] \ar[r] & X^\circ(i) \ar[r]  & 0 \\ 
}
\end{equation}
and we are going to devise an algorithm determining  ${\cal D}_i: = f_i^{-1}(\div ({\mathfrak{\hat N}}(i)^\Gamma)[p])$. 

To this end, we notice first that 
$$ {\mathfrak {\tilde N}}(i)^\Gamma[p^2] =  ({\mathfrak {\tilde N}}(i)[p^2]^{\Gamma_1})^{G_1} 
= ({\mathfrak {\tilde N}}^{\Gamma_1}[p^2](i))^{G_1} \simeq   ({\tilde U_1}'/p^2)(i)^{G_1}   \qquad \text{(Lemma 2.2)} $$ 
where  $\Gamma_1 := \gal (F_{\infty}/F_1)$  and  $G_1:=\gal (F_1/F)$. 
Since  ${\tilde U_1}'/p^2 \simeq ({\bar U'}_{\infty})_{\Gamma_1}/p^2 $  is free as a module over  $\Z/p^2[G_1]$  (Proposition 1.1), the fixed points  $({\tilde U_1}'/p^2)(i)^{G_1}$  can be identified with   
$\nu^{(i)}({\tilde U_1}'/p^2)$,  where  $\nu$  is the norm (trace in additive notation) relative to  $G_1$  and  
$\nu^{(i)}$  is its  $i$-twist. 
The above diagram (3)  then shows that  ${\cal D}_i$  consists of all the  $\nu^{(i)}({\bar x})$,  ${\bar x} \in {\tilde U_1}'/p^2$  such that  $p (g_i(\nu^{(i)}({\bar x})) = 0$, or, according to the third line of (3), such that  $p (\nu^{(i)}({\bar x})) \in X^\circ(i)$. 
Thus, we are left to express  $X^\circ(i)$  in this context. 
We suppose that  ${\hat U}'_F$  is known (by Lemma 1.4) hence also  ${\tilde U}'_F$  by Proposition 1.5.  
In fact, since they are both free $\Z_p$-modules with  $[{\hat U}'_F  :{\tilde U}'_F]=p$, there exists a  $\Z_p$-basis  
$(e_1, e_2, \cdots )$  of  ${\hat U}'_F$  such that  $(pe_1, e_2, \cdots )$  is a  $\Z_p$-basis of  ${\tilde U}'_F$.  
The first line of  (3) shows immediately that  $X^\circ(i)$  is the cyclic group generated by the class 
(in  ${\tilde U}'_F/p$)  of  $pe_1$, which can be written as  
$pe_1 \otimes 1/p \in  {\mathfrak {\tilde N}}(i)^\Gamma[p]$. 

Now, writing  ${\bar x} \in  {\tilde U_1}'/p^2$  as  
$x \otimes 1/p^2 \in {\mathfrak {\tilde N}}(i)^\Gamma[p^2]$,  with  $x \in {\tilde U_1}'$, we conclude our algorithm: 

\begin{prop}
The pre-image ${\cal D}_i= f_i^{-1}(\div ({\mathfrak{\hat N}}(i)^\Gamma)[p])$  consists of the elements 
$\nu^{(i)}(x) \otimes 1/p^2 \in {\mathfrak {\tilde N}}(i)^\Gamma[p^2]$, $x \in {\tilde U_1}'$  such that  $\nu^{(i)}(x) \otimes 1/p $  is colinear to  
$pe_1 \otimes 1/p$. 
\end{prop}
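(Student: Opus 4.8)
The plan is to read the Proposition off the material assembled just above it, in three moves, the only slightly delicate one being the first.

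First I would make the set ${\cal D}_i$ fully explicit. In sequence (2) the map $f_i$ factors as ${\mathfrak {\tilde N}}(i)^\Gamma \twoheadrightarrow \div({\mathfrak {\hat N}}(i)^\Gamma) \hookrightarrow {\mathfrak {\hat N}}(i)^\Gamma$, and $\ker f_i = X^\circ(i)$ has order $p$. Hence an element $y \in {\mathfrak {\tilde N}}(i)^\Gamma$ lies in ${\cal D}_i = f_i^{-1}\bigl(\div({\mathfrak {\hat N}}(i)^\Gamma)[p]\bigr)$ iff $p\,f_i(y) = 0$, iff $f_i(py)=0$, iff $py \in X^\circ(i)$; and in that case $p^2 y = 0$ automatically. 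So
$$ {\cal D}_i = \{\, y \in {\mathfrak {\tilde N}}(i)^\Gamma[p^2] \ :\ py \in X^\circ(i) \,\}, $$
which is exactly what the bottom row of diagram (3) encodes.

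Second, I would insert the two parametrizations prepared before the statement. By Lemma 2.2, Proposition 1.1 and the freeness of ${\tilde U_1}'/p^2 \simeq ({\bar U'}_{\infty})_{\Gamma_1}/p^2$ over $\Z/p^2[G_1]$, the passage to $G_1$-invariants is realized by the $i$-twisted norm $\nu^{(i)}$, so that every element of ${\mathfrak {\tilde N}}(i)^\Gamma[p^2]$ is of the shape $\nu^{(i)}(x)\otimes 1/p^2$ with $x \in {\tilde U_1}'$. And with the $\Z_p$-bases $(e_1,e_2,\dots)$ of ${\hat U}'_F$ and $(pe_1,e_2,\dots)$ of ${\tilde U}'_F$ chosen above, the top row of (3) identifies $X^\circ(i)$ with the order-$p$ subgroup of ${\mathfrak {\tilde N}}(i)^\Gamma[p]$ generated by $pe_1 \otimes 1/p$. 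Third, I would combine these: since $p\bigl(\nu^{(i)}(x)\otimes 1/p^2\bigr) = \nu^{(i)}(x)\otimes 1/p$, the criterion $py \in X^\circ(i)$ reads precisely as "$\nu^{(i)}(x)\otimes 1/p$ is colinear to $pe_1 \otimes 1/p$", which is the assertion.

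Since each move only reassembles earlier results, I do not anticipate a genuine obstacle; the one point that deserves a careful line is the first display. For the implication "$py \in X^\circ(i) \Rightarrow y \in {\cal D}_i$" one must use that $f_i$ really takes its values in the divisible submodule $\div({\mathfrak {\hat N}}(i)^\Gamma)$, so that $f_i(y)$, being killed by $p$, actually lands in $\div({\mathfrak {\hat N}}(i)^\Gamma)[p]$; and to know that the whole discussion stays inside ${\mathfrak {\tilde N}}(i)^\Gamma[p^2]$, where the norm description of the invariants is available, one uses that $\ker f_i = X^\circ(i)$ has exponent $p$. Once that is secured, the computation $p\bigl(\nu^{(i)}(x)\otimes 1/p^2\bigr)=\nu^{(i)}(x)\otimes 1/p$ and the comparison with the generator of $X^\circ(i)$ are immediate.
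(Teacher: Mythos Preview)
Your proposal is correct and follows essentially the same route as the paper: the paper's argument, spread over the paragraphs immediately preceding the Proposition, likewise reads off from diagram~(3) that ${\cal D}_i$ consists of those $\nu^{(i)}(\bar x)\in {\mathfrak{\tilde N}}(i)^\Gamma[p^2]$ with $p\cdot\nu^{(i)}(\bar x)\in X^\circ(i)$, and then identifies $X^\circ(i)$ with the cyclic group generated by $pe_1\otimes 1/p$ via the first row of~(3). Your only addition is to spell out explicitly why $py\in X^\circ(i)$ forces $y\in{\cal D}_i$ (using that $f_i$ lands in the divisible part) and why $p^2y=0$ (using that $X^\circ$ has exponent~$p$), points the paper leaves implicit.
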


\textbf{Remarks}:
(i) The colinearity factor above is in  $\Z/p$. 
It is not zero precisely when  $\nu^{(i)}(x) \otimes 1/p^2$  is of order  $p^2$. \\
(ii) For practical purposes, it is important to stress that the maps  $f_i$  and  $g_i$  in diagram (3) are induced by the natural map    
${\mathfrak {\tilde N}} \rightarrow  {\mathfrak{\hat N}}$. \\
(iii) Our algorithm can be extended to the general case (i.e. when  $X^\circ$  is not necessarily of order  $p$),  but the calculations of course become heavier, because  $F_1$  must be replaced by some higher level  $F_n$,  and the preliminary determination of   $X^\circ$ (considered for example as a capitulation kernel; see the remark at the end of Section 2)  is not a trivial matter. \\
(iv) As in the already existing effective  (\cite {Gra85,He88}) or algorithmic (\cite {Th93})
calculations of  ${\cal A}_F$, we need information on the  $p$-units (via ${\hat U}'_F$)  and the  $p$-class groups  (via $X^\circ$). But note that the methods based on class field theory (op. cit.)  need further to appeal to explicit reciprocity laws (via Hilbert symbols).

\subsection{Biquadratic fields for  $p=3$} 
The biquadratic fields  $F:=\Q(\mu_3, \sqrt{d})$  for  $p=3$ with  $d \in \Z$  squarefree  and  $3 \nmid d$ were first studied by Kramer and Candiotti \cite {KC78} who determined   ${\cal A}_F$  for  $\mid d \mid <200$  and showed that  ${\cal A}_F = {\cal T}_F$,   except for the five critical values  $d=-107, 67, 103, 106$  and  $139$  which they did not treat.  
Immediately after, Greenberg \cite {Gre78} showed that  ${\cal A}_F \neq {\cal T}_F$  for $d=67$. 
The corresponding  ${\cal A}_F$  for  these critical cases were computed by H\'emard  
\cite {He88} using an idelic calculation of the  $\Z_p$-torsion of the Bertrandias-Payan module  $BP_F$ (see Section 3.2). A similar approach allowed Thomas  \cite {Th93} to devise an algorithm computing  ${\cal A}_F$  for  a wide range of $d$. 
We do not intend to redo these calculations. 
It seems more interesting to use our approach to determine the Tate kernel  ${\cal T}_F$   because, curiously enough, we know of no such systematic computation in the literature. 
For  $p=3$, there are only three relevant kernels  ${\hat U}_F/3$,   ${\cal A}_F$  and  ${\cal T}_F$  corresponding to the twists  $0, -1$ and  $1$  respectively.  
For the biquadratic fields  $F$, we aim to derive the Tate kernel from the knowledge of the two other kernels. 

Let us consider the five critical cases  $F:=\Q(\mu_3, \sqrt{d})$,  $d = -107, 67, 103, 106$  and  $139$  with  $p=3$. 
Using PARI package, we can see that  $X^\circ(F^+)$  is  of order  $3$,  hence the algorithm of Section 4.1  applies. 
It is easy to check that $F( \sqrt[3]{3})$  is the first layer of the anticyclotomic $\Z_3$-extension of  $F$ and 
$3$  is a global universal norm. The three distinct kernels  ${\hat U}'_F/3$,   ${\cal A}'_F$  and  ${\cal T}'_F$ have dimension 2 and contain  ${\bar 3}:=3 \;  \text {mod} \; F^{\bullet 3}$. 
Let us treat in detail a specific example, say  $d=67$. 
The fundamental unit of the quadratic subfield  $F^+ =\Q(\sqrt{67})$  is  $\epsilon = 48842 + 5967 \sqrt{67}$  
and  Lemma 1.4  shows that   ${\bar \epsilon} \in  {\hat U}'_F/3$  (but  $\epsilon \not \in {\hat U}'_F$). 
Then  ${\hat U}'_F/3 = <{\bar 3}, {\bar \epsilon}>$  whereas  
${\cal A}'_F  = <{\bar 3}, {\bar \epsilon^2}  {\bar \eta}>$ 
(\cite [page 371]{He88})  with  $\eta = 8 + \sqrt{67}$  being a generator of a  $3$-adic prime in  $F^+.$
These radicals are hyperplanes of the  $\Z/3$-vector space   ${\bar U}'_F/3 =<{\bar 3}, {\bar \epsilon},  {\bar \eta}>$,  but also of the $\Z/3$-vector space   $({\hat U}'_1/3)^{G_1}$  according to Theorem 3.1. 
Their explicit form  shows that a basis of  
$({\hat U}'_1/3)^{G_1}$  will be  $\{{\bar 3}, {\bar \epsilon}, {\bar \epsilon}^2 {\bar \eta} \}$, 
or better $\{{\bar 3}, {\bar \epsilon}, {\bar \eta}\}$. 
In particular,  $({\hat U}'_1/3)^{G_1}$  coincides with the image of the natural injection  
${\bar U}'_F/3  \hookrightarrow  {\bar U}'_1/3$.  
Obviously the third kernel  ${\cal T}'_F$  will be of the form  
$\{{\bar 3}, {\bar \eta} \}$  or  $\{{\bar 3}, {\bar \eta} {\bar \epsilon}\}$. 
But the choice between these two forms will unexpectedly be non trivial.

Since the rest of the calculation will essentially be elementary linear algebra, it will be more 
convenient to use additive notation for the three kernels 
$D_i:=\div({\mathfrak {\hat N}}(i)^\Gamma)[3]$. 
They could be put on the same level and computed using the algorithm of Section 4.1, but here we want to deduce the last one from the two others. 
Let us first fix some general notations: 
$D_i=<3 \otimes \frac{1}{3}, \beta_i >$,  
$\beta_0 = \epsilon \otimes \frac{1}{3}$, 
$\beta_{-1} = (\eta - \epsilon) \otimes \frac{1}{3}.$
According to diagram (3) of Section 4.1,  ${\cal D}_i = f_i^{-1}(D_i)$  is of type  $(3^2,3)$  and  
$f_i({\cal D}_i[3]) = f_i({\mathfrak {\tilde N}}(i)^\Gamma[3])$  is the cyclic group generated by  $3 \otimes \frac{1}{3}$.  We look for pre-images (necessarily of order $3^2$)  ${\tilde \beta}_i \in {\cal D}_i$  of  $\beta_i$  such that  
${\cal D}_i = <3 \otimes \frac{1}{3}, {\tilde \beta}_i >$.

Recall that Proposition 4.1  gives us the general form of the  ${\tilde \beta}_i$'s, but here we directly know  (by our chosen approach)  $\beta_0$  and  $\beta_{-1}$.  
Let us fix  $\epsilon' \in {\hat U}'_F$  such that  
$\epsilon' \otimes \frac{1}{3} = \epsilon \otimes \frac{1}{3}$  in  ${\mathfrak {\hat N}}$. 
We have just seen that  $f_0({\mathfrak {\tilde N}}^\Gamma[3]) = <3 \otimes \frac{1}{3}>$,  
hence  $\epsilon' \otimes \frac{1}{3} \not \in {\mathfrak {\tilde N}}^\Gamma[3]$  
or equivalently  $\epsilon' \not \in {\tilde U}'_F$.  Because  $3 \epsilon' \in {\tilde U}'_F$,  a pre-image  
${\tilde \beta}_0$  of  $\beta_0$  will be  $3 \epsilon' \otimes \frac{1}{3^2}$. 
Similarly, consider   $\eta \otimes \frac{1}{3} \in ({\hat U}'_1/3)^{G_1}$  and fix  
$\eta' \in {\hat U}'_1$  such that  $\eta' \otimes \frac{1}{3} = \eta \otimes \frac{1}{3}$. 
We have seen that  $({\tilde U}'_1/3)^{G_1} = \nu({\tilde U}'_1/3)$  and therefore  
$({\tilde U}'_1/3)^{G_1} \simeq {\tilde U}'_F/3$  
since the (arithmetic) norm of 
${\tilde U}'_1/3$  is  ${\tilde U}'_F/3$.  
The same argument as for  $\epsilon'$  allows us to show that  $\eta' \not \in {\tilde U}'_1$,  
and the natural map  
${\mathfrak {\tilde N}}[3^2] \to {\mathfrak {\hat N}}[3^2]$  (whose kernel is  $X^\circ$)  
sends  $3(\eta' - \epsilon') \otimes \frac{1}{3^2}$  to  
$(\eta - \epsilon) \otimes \frac{1}{3}$,  
but be aware that  $3(\eta' - \epsilon') \otimes \frac{1}{3^2}$  is not a priori in 
${\mathfrak {\tilde N}}(-1)^\Gamma[3^2]$. 
Actually, Proposition 4.1  shows that there exists an element  
$\nu^{(-1)}(x) \otimes \frac{1}{3^2} \in {\cal D}_{-1}$  such that  
$g_{-1}(\nu^{(-1)}(x) \otimes \frac{1}{3^2}) = (\eta - \epsilon) \otimes \frac{1}{3^2}$, 
hence  $\nu^{(-1)}(x) \otimes \frac{1}{3^2}$  and   $3(\eta' - \epsilon') \otimes \frac{1}{3^2}$  
differ by an element of order  $3$  in  ${\mathfrak {\tilde N}}[3^2]$.
Therefore, we can take  
${\tilde \beta}_{-1} = 3(\eta' - \epsilon') \otimes \frac{1}{3^2} + \delta_{-1}$  with  
$\delta_{-1} \in {\mathfrak {\tilde N}}[3]$. 
The additional condition in Proposition 4.1 then reads: 
$3(\eta' - \epsilon') \otimes \frac{1}{3} = \pm 3 \epsilon' \otimes \frac{1}{3}$. 
The sign  $-1$  on the right hand side would mean that  $3 \eta' \otimes \frac{1}{3} = 0$  in  
${\tilde U}'_1/3$, i.e.   $\eta' \in {\tilde U}'_1$  since  ${\tilde U}'_1$  is torsion free: a contradiction. 
Finally,  ${\tilde \beta}_{-1} = 3(\eta' - \epsilon') \otimes \frac{1}{3^2} + \delta_{-1}$  and  
$3 {\tilde \beta}_{-1} = 3 \epsilon' \otimes \frac{1}{3}$. 

We must now give a general expression for the elements of order  $3^2$  in  ${\cal D}_1$. 
If  ${\tilde \beta}_1$  is such an element, any other is obtained by adding  an element of order  $3$  to  $\pm {\tilde \beta}_1$. 
So we start by constructing a particular  ${\tilde \beta}_1$. 
Fix  $x_0, x_{-1} \in {\tilde U}'_1$  such that  
$\nu(x_0) \otimes \frac{1}{3} = {\tilde \beta}_0$  and  
$\nu^{(-1)}(x_{-1}) \otimes \frac{1}{3} = {\tilde \beta}_{-1}$ (see Proposition 4.1), and put  
$x_1=x_0 + \lambda x_{-1}$, $\lambda = \pm 1$. 
An elementary calculation on Tate twists shows that  
$\nu^{(1)}(x_i)$  differs from  $\nu^{(i)}(x_i)$  by an element of  $({\tilde U}'_1)^3$,  hence  
${\tilde \beta}_1 := \nu^{(1)}(x_1)\otimes \frac{1}{3} 
= {\tilde \beta}_0 + \lambda {\tilde \beta}_{-1}+ \delta_1$, with  
$\delta_1 \in {\mathfrak {\tilde N}}[3]$. 
The additional condition in  Proposition 4.1 reads: 
$3 {\tilde \beta}_1 = 3 {\tilde \beta}_0 + \lambda 3 {\tilde \beta}_{-1} 
= \pm (3 \epsilon' \otimes \frac{1}{3})$.  
Since  $3 {\tilde \beta}_0 = 3 \epsilon' \otimes \frac{1}{3} = 3 {\tilde \beta}_{-1}$  (see the previous calculations), we get  $1+\lambda = \pm 1$. The only possibility is  $\lambda = 1$. 
Hence  
${\tilde \beta}_1 = {\tilde \beta}_0 + {\tilde \beta}_{-1} + \delta_1$  
and  $f_1({\tilde \beta}_1) = \eta \otimes \frac{1}{3} + f_1(\delta_1)$,  
so that any element of  $D_1$  will be of the form  $\pm \eta \otimes \frac{1}{3} + \delta'_1$,  
where  $\delta'_1$  is in the image of the natural map  
${\mathfrak {\tilde N}}[3] \to {\mathfrak {\hat N}}[3]$.  
We can now choose between the two possibilities 
$\eta \otimes \frac{1}{3}$  or  $(\eta + \epsilon) \otimes \frac{1}{3}$.  
In the second case, we would have  $\epsilon \otimes \frac{1}{3}$  or  
$(\epsilon - \eta) \otimes \frac{1}{3} = \delta'_1$, whence 
${\tilde \beta}_0$  or  ${\tilde \beta}_{-1}$  would be of order  $3$ (because the kernel  $X^\circ$  is of order  $3$): a contradiction. 
In conclusion, the Tate kernel in our example is  ${\cal T}'_F = <{\bar 3}, {\bar \eta}>$:  
the symbol  $\{\zeta_3,  \eta\}$  is trivial in  $K_2(F)$   whereas  
$\{\zeta_3,  \epsilon \eta \}= \{\zeta_3,  \epsilon \}$  is a non-trivial element of the wild kernel   $WK_2(F).$

\vspace{5mm}
\textbf{Remark}: 
The above result could of course be reached by describing the whole tame kernel  $K_2 o_F$  by generators and relations. Algorithms for such a calculation exist in the literature \cite {BG04}. 
However, Karim Belabas kindly pointed out to us that in the case of the above example, the generators are  $S$-units, where  $S$  contains all the primes of norm less than  $5096521$ so by brute force calculation it would take years to find the relations between them.

\vspace{2cm}
\begin{tabular}{ll}
 \textsc{A. Movahhedi} & \hspace{2cm} \textsc{T. Nguyen Quang Do} \\
Universit\'e de Limoges                        & \hspace{2cm} Universit\'e de Franche-Comt\'e \\
XLIM                                            & \hspace{2cm} Laboratoire de Mathématiques \\
UMR 7252 CNRS                                  & \hspace{2cm} CNRS UMR 6623 \\
123 av. Albert Thomas                           & \hspace{2cm} 16 route de Gray \\
87060 LIMOGES CEDEX                            & \hspace{2cm} 25030 BESANÇON CEDEX \\
FRANCE                                         & \hspace{2cm} FRANCE \\
\end{tabular}
\end{document}